\newcommand{\calO}{\mathcal{O}}
\newcommand{\calH}{\mathcal{H}}
\newcommand{\bbP}{\mathbb{P}}
\newcommand{\bbZ}{\mathbb{Z}}
\newcommand{\bbQ}{\mathbb{Q}}
\newcommand{\bbG}{\mathbb{G}}
\newcommand{\bbF}{\mathbb{F}}
\newcommand{\sfE}{\mathsf{E}}
\newcommand{\sfI}{\mathsf{I}}
\newcommand{\frakS}{\mathfrak{S}}
\newcommand{\frakA}{\mathfrak{A}}
\newcommand{\bfk}{\mathbf{k}}
\newcommand{\frakf}{\mathfrak{f}}
\newcommand{\frake}{\mathfrak{e}}
\newcommand{\frako}{\mathfrak{o}}
\newcommand{\half}{\frac{1}{2}}
\newcommand{\Or}{\mathrm{O}}
\newcommand{\U}{\operatorname{U}}
\newcommand{\PGU}{\operatorname{PGU}}
\newcommand{\PGL}{\operatorname{PGL}}
\newcommand{\SU}{\operatorname{SU}}
\newcommand{\PSU}{\operatorname{PSU}}
\newcommand{\SO}{\operatorname{SO}}
\newcommand{\PSp}{\operatorname{PSp}}
\newcommand{\Bl}{\operatorname{Bl}}
\newcommand{\PSL}{\operatorname{PSL}}
\newcommand{\tr}{\textrm{tr}}
\newcommand{\Lef}{\operatorname{Lef}}
\newcommand{\bfF}{\mathbf{F}}
\newcommand{\beq}{\begin{equation}}
\newcommand{\eeq}{\end{equation}}
\renewcommand{\arraystretch}{1.15}
\theoremstyle{plain}
\newtheorem*{theorem*}{Theorem}
\tikzstyle{nodal}=[circle,draw,fill=black,inner sep=0pt, minimum width=4pt]
\tikzset{double distance = 2pt}
\begin{document}

\title[Automorphisms of del Pezzo surfaces in odd characteristic]
{Automorphisms of del Pezzo surfaces in odd characteristic}
\author{Igor Dolgachev}
\address{\hfill \newline 
Department of Mathematics \newline
University of Michigan \newline
525 East University Avenue \newline
Ann Arbor,
 MI 48109-1109 USA}
\email{idolga@umich.edu}

\author{Gebhard Martin}
\address{\hfill \newline
Mathematisches Institut  \newline
Universit\"at Bonn \newline
Endenicher Allee 60 \newline
53115 Bonn \newline
Germany}
\email{gmartin@math.uni-bonn.de}

\begin{abstract} We complete the classification of 
automorphism groups of del Pezzo surfaces over algebraically closed fields of odd positive 
characteristic.
\end{abstract}  

\maketitle

 \tableofcontents

\section*{Introduction} 
A del Pezzo surface of degree $d$ is a smooth projective algebraic surface $X$ 
with ample anti-canonical class $-K_X$ satisfying $K_X^2 = d$.  
It is known that $1\le d \le 9$, and $X\cong \bbP^2$ if $d =9 $. A del Pezzo surface of degree 
$8$ is isomorphic to a smooth quadric surface or to the blow-up of the projective plane in one point. 
All other del Pezzo surfaces of degree $d\le 7$, are isomorphic to the blow-up of $9-d$ points in $\bbP^2$ 
in general position.
Any non-degenerate smooth linearly normal surface in $\bbP^d$ of degree $d\ge 3$  is isomorphic to an anti-canonically embedded del Pezzo surface.

The group $\Aut(X)$ of automorphisms of a del Pezzo surface $X$ is a finite group if 
$d\le 5$ and a smooth algebraic group of positive dimension if $d\ge 6$.

The classification of automorphism groups of del Pezzo surfaces over an algebraically 
closed field of characteristic $0$ has been known for more than a hundred years. 
A modern exposition and the history can be found in \cite{CAG}. 
It is used in the classification of conjugacy classes of finite subgroups  of the Cremona group 
of the projective plane in \cite{DI}. The knowledge about possible groups of 
automorphisms of del Pezzo surfaces over algebraically closed fields of 
positive characteristic is essential for the extension of this classification to positive characteristic. 
Partial results in this direction can be found in 
\cite{Dolgachev1} and \cite{Dolgachev2}, and 
the classification of 
possible automorphism groups of cubic and quartic del Pezzo surfaces was accomplished in \cite{DD}. For the convenience of the reader, we recall this classification in Table \ref{tbl:autodp4} and Table \ref{tbl:cubics} in the Appendix.

Any rational surface $X$ with a group $G$ of automorphisms of order prime to the characteristic can be lifted 
to characteristic zero together with an action of $G$ on the lift \cite{Serre}. This reduces the classification of automorphism 
groups in positive characteristic $p$ to the classification of the groups of automorphisms of order divisible by $p$, which is the main part of this article,
and also to the analysis of pairs $(X,G)$ in characteristic zero that admit a good reduction modulo $p$, see Section \ref{sec: tamedeg2} and Section \ref{sec: tamedeg1}.
The classification of the groups of automorphisms of del Pezzo surfaces of degree 
larger than $4$ is rather easy, see Section \ref{S:3}. As the classification of automorphism groups of del Pezzo surfaces of degree $4$ and $3$ was achieved in \cite{DD},
we will complete the classification of automorphisms of del Pezzo surfaces by concentrating on the 
task of classifying automorphism groups of del Pezzo surfaces 
of degree $2$ and $1$. 

The cases of odd and even characteristic are drastically different and require different tools.
This is due to the fact that, in odd characteristic, the classification of automorphism groups of
del Pezzo surfaces of degree $2$ and $1$ essentially coincides with the classification 
of groups of projective automorphisms of smooth plane quartic curves or 
smooth genus $4$ curves of degree $6$ lying on a singular quadric. In the case of even characteristic no such relations exist.
 
 This is why we separate the classification into two parts. The present paper 
 deals with the case of odd characteristic and the subsequent paper \cite{DMeven} will deal with 
 the case of characteristic two. Our main result can be summarized as follows.

 \begin{theorem*}
A finite group $G$ is realized as the automorphism group $\Aut(X)$ of a del Pezzo surface $X$ of degree $2$ (resp. $1$) over an algebraically closed field $k$ of characteristic ${\rm char}(k) \neq 2$ if and only if $G$ is listed in Table \ref{tbl:autodp2} (resp. Table \ref{tbl:autodp1}) in the Appendix.
 \end{theorem*}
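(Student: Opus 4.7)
The plan is to exploit the central involution $\iota \in \Aut(X)$ provided by the Geiser involution when $\deg X = 2$ and by the Bertini involution when $\deg X = 1$. In odd characteristic the linear systems $|{-}K_X|$ (for $d = 2$) and $|{-}2K_X|$ (for $d = 1$) realise $X$ as a separable double cover of $\bbP^2$, respectively of a quadric cone $Q \subset \bbP^3$, branched along a smooth plane quartic $B$, respectively along a smooth curve $B \in |\mathcal{O}_Q(3)|$ of degree $6$ and arithmetic genus $4$ together with the vertex of $Q$. Since $\iota$ is central, one obtains the short exact sequence
$$
1 \longrightarrow \langle \iota \rangle \longrightarrow \Aut(X) \longrightarrow \overline{G} \longrightarrow 1,
$$
where $\overline{G}$ is the group of projective automorphisms of the ambient surface $Y$ preserving $B$. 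The classification of $\Aut(X)$ thus reduces to (a) classifying $\overline{G}$, and (b) determining in each case whether the extension by $\langle \iota \rangle$ splits, the latter being controlled by the existence of a $\overline{G}$-equivariant square root of the defining equation of $B$ as a section of the appropriate line bundle on $Y$.

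Sub-problem (a) splits further according to whether $p$ divides $|\overline{G}|$. In the tame case $p \nmid |\overline{G}|$, Serre's theorem lifts the pair $(X, \Aut(X))$ to characteristic zero, so the classical classification \cite{CAG} provides a finite list of candidates; the work left for Sections \ref{sec: tamedeg2} and \ref{sec: tamedeg1} is to check which of these pairs admit good reduction modulo $p$, by writing down an explicit normal form for $B$ and checking smoothness of its reduction. In the wild case $p \mid |\overline{G}|$, the strategy is to fix an element $\sigma \in \overline{G}$ of order $p$, linearise its action on $Y$, and force $B$ into a restricted normal form; from this normal form one reads off the Sylow $p$-subgroup $P \subset \overline{G}$, and then $\overline{G}$ itself is recovered by examining the normaliser of $P$ inside $\Aut(Y)$ and selecting those subgroups that preserve a smooth branch curve.

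I expect the main obstacle to be characteristic $3$ for $d = 2$ and characteristics $3$ and $5$ for $d = 1$. Precisely in these small characteristics, elementary abelian $p$-subgroups of unexpectedly large rank appear inside $\PGL_3$ and inside $\Aut(Q)$, giving rise to wild automorphism groups — typically extensions of $(\bbZ/p)^k$ by small finite subgroups of $\operatorname{PGL}_2$, akin to the degenerations of the Hesse group that stabilise the Fermat quartic in characteristic $3$ — with no counterpart in characteristic zero. Producing the complete list there is essentially algorithmic but requires a case-by-case analysis of explicit equations and a careful exclusion of degenerate branch curves. Once $\overline{G}$ is pinned down, the splitting of the $\langle \iota \rangle$-extension in step (b) is decided in each case by inspecting the induced action on a basis of sections of the relevant line bundle on $Y$, completing the tables in the Appendix.
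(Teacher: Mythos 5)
Your outline for degree $2$ coincides with the paper's: there the quotient $\overline{G}$ is $\Aut(C)$ for the branch quartic $C$, and the wild analysis proceeds exactly as you describe (normalize an order-$3$ element, restrict the equation of $C$, exclude $p=5,7$). One simplification you miss: since $W(\sfE_7)\cong \Sp_6(2)\times 2$ (Theorem \ref{thm:weylgroups2}), the Geiser extension is always a direct product $\Aut(X)\cong 2\times\Aut(C)$, so your step (b) is vacuous in degree $2$. For degree $1$ the paper takes a genuinely different route: instead of classifying subgroups of $\Aut(\bbP(1,1,2))$ preserving the branch sextic and then resolving the Bertini extension, it filters $\Aut(X)$ by the action on the base of the anti-canonical elliptic pencil, writing $\Aut(X)=H.P$ with $H=\Aut(F_\eta)$ the automorphism group of the generic fiber and $P\subseteq\PGL_2(\Bbbk)$. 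This buys two things your route does not: Lemma \ref{lem:autgenericfiber} imports the standard theory of automorphisms of elliptic curves over function fields (in particular the extra subgroup $H_0\cong C_3$ that appears in characteristic $3$ exactly when $j(F_\eta)=0$ and $a_4$ is a square), and all lifting questions reduce to explicit solvability of Weierstrass substitution identities such as \eqref{eq: a_6jnon0}, which is more tractable than deciding $\overline{G}$-equivariance of a square root of the sextic on the non-reductive group $\Aut(\bbP(1,1,2))$. Your route is viable but would have to rediscover these facts by hand.

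There is one concrete gap in your tame step: you propose to decide which characteristic-zero pairs survive by ``checking smoothness of the reduction'' of a normal form for $B$. Smoothness is not the right criterion in degree $1$. Several tame groups from the characteristic-zero list act on surfaces whose reductions remain smooth but whose automorphism groups strictly grow: e.g.\ in characteristic $5$ the surface with $\Aut = 3\times(\SL_2(3){:}2)$ reduces to the surface of Theorem \ref{p=5}(ii) with group $C_6.\PGL_2(5)$ of order $720$, and in characteristic $3$ the surfaces carrying $C_{20}$ and $D_{16}$ are forced to have $a_2=0$, hence $j(F_\eta)=0$ and $H_0=C_3$, and their groups become $C_6.C_{10}$ and $C_6.D_8$ (Theorem \ref{classification}). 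So these tame groups are \emph{not} realized as full automorphism groups even though the relevant branch curves reduce smoothly; see Section \ref{sec: tamedeg1}. To repair your plan you must check, for each tame candidate, not only smoothness of the reduction but also that the reduced surface acquires no additional (necessarily wild) automorphisms — in practice by cross-referencing against the completed wild classification.
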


Table \ref{tbl:autodp2} (resp. Table \ref{tbl:autodp1}) also gives the conjugacy classes in $W(E_7)$ (resp. $W(E_8)$) of all elements of $\Aut(X)$ for all del Pezzo surfaces $X$ of degree $2$ (resp. degree $1$). Here, we use Carter's notation for these conjugacy classes, see \cite{Carter} and Section \ref{S:1}. The knowledge of these conjugacy classes is important in order to understand the conjugacy relation between the various automorphism groups of del Pezzo surfaces inside the Cremona group of rank $2$. 
 
\bigskip

\noindent \textbf{Acknowledgements:} The authors would like to thank Claudia Stadlmayr for helpful comments on a first draft of this article. 

 \newpage
 \section{Notation}
 We will use the following group-theoretical notations for the finite groups we encounter in the article.
 Throughout this article, $p$ is a prime and $q$ is a power of $p$.

\begin{itemize}
\item $C_n$ is the cyclic group of order $n$.
\item $\frakS_n$ and $\frakA_n$ are the symmetric and alternating groups on $n$ letters.
\item $Q_8$ is the quaternion group of order 8.
\item $D_{2n}$ is the dihedral group of order $2n$.
\item $n^k = (\bbZ/n\bbZ)^k$. In particular, $n = n^1 = \bbZ/n\bbZ$.
\item $\GL_n(q) = \GL(n,\bbF_q)$.
\item $\PGL_n(q) = \GL_n(q)/\bbF_q^*$. Its order is $N = q^{\half n(n-1)}(q^n-1) \cdots (q^2-1)$. 
\item $\SL_n(q) = \{g\in \GL_n(q):\det(g) = 1\}$. This is a subgroup of $\GL_n(q)$ of index $(q-1)$.
\item ${\rm L}_n(q) =\PSL_n(q)$ is the image of $\SL_n(q)$ in $\PGL_n(q)$. Its order is $N/(q-1,n)$. 
\item For odd $n$, $\Or_{n}(q)$ is the subgroup of $\GL_n(q)$ that preserves a non-degenerate quadratic form $F$.
\item For even $n$, $\Or_n^+(q)$ (resp. $\Or_n^-(q)$) is the subgroup of $\GL_n(q)$ that preserves a non-degenerate quadratic form $F$ of Witt defect $0$ (resp. $1$).
\item $\SO_{n}^{\pm}(q)$ is the subgroup of $\Or_{n}^{\pm}(q)$ of elements with determinant $1$. 
\item ${\rm PSO}_n^{\pm}(q)$ is the quotient of $\SO_{n}^{\pm}(q)$ by its center.
\item $\Sp_{2n}(q)$ is the subgroup of $\SL_q(2n)$ preserving the standard symplectic form on $\bbF_q^{2n}$.
Its order 
is $q^{n^2}(q^{2n-1}-1)\cdots (q^2-1)$.
\item $\PSp_{2n}(q) = \Sp_{2n}(q)/(\pm 1)$.
\item $\SU_n(q^2)$ is the subgroup of $\SL_n(q^2)$ of matrices preserving the hermitian form 
$\sum_{i=1}^nx_i^{q+1}$. 
Its order is 
$q^{\half n(n-1)}(q^n-(-1)^n)(q^{n-1}-(-1)^{n-1})\cdots (q^3+1)(q^2-1)$.
 We have $\SU_2(q^2) = \SL_2(q).$
\item $\PSU_n(q^2) = \SU_n(q^2)/C, $ where $C$ is a cyclic group of order $(q+1,n)$ of diagonal Hermitian matrices.
The simple group $\PSU_n(q^2)$ is denoted by 
$\U_n(q)$ in \cite{ATLAS}.
\item $\mathcal{H}_3(3)$ is the Heisenberg group of $3\times 3$ upper triangular matrices with entries in $\bbF_3$.
\item $A.B$ is a group that contains a normal subgroup $A$ with quotient group $B$.
\item $A:B$ is the semi-direct product $A\rtimes B$.
 \end{itemize}

\newpage

\section{Preliminaries}\label{S:1}
\subsection{The anti-canonical map} \label{SS:2.1} The main references for the known facts about del Pezzo 
surfaces to which we refer without proofs are \cite{Demazure} and \cite{CAG}. Throughout, we are working over an algebraically closed field of characteristic $p \neq 2$.

The linear system $|-K_X|$ on a del Pezzo surface $X$ 
of degree $d$ is of dimension $d$. It is base-point-free if $d\ne 1$ and defines an embedding 
$$j:X\hookrightarrow \bbP^d$$
if $d\ge 3$. The image is a non-degenerate smooth surface of degree $d$ in $\bbP^d$. 
If $d = 9$, the embedding
coincides with the third Veronese map $X\cong \bbP^2\hookrightarrow \bbP^9$. 

A del Pezzo surface of degree $d = 8$ is isomorphic to a minimal ruled surface 
$\bfF_0$ or $\bfF_1$. In all other cases, the image of $j$ is a projection of a Veronese surface from a linear span of $9-d$ points on the surface. 
This classical fact, due to del Pezzo, can be restated as the fact that any 
del Pezzo surface of degree $d\ge 3$ is isomorphic to a smooth quadric or obtained as the blow-up 
of $9-d$ points $p_1,\ldots,p_{9-d}$ in the plane. The latter description of del Pezzo surfaces 
extends to del Pezzo surfaces of the remaining
degrees $d = 1$ and $2$. Here, the points $p_1,\ldots,p_{9-d}$ must be in general position.

The linear system $|-K_X|$ is the proper inverse transform in the blow-up 
$\Bl_{p_1,\ldots,p_{9-d}}(\bbP^2)$ of the linear system of plane cubic curves 
$C^3(p_1,\ldots,p_{9-d})$ passing through the points $p_1,\ldots,p_{9-d}$.

For the following description of del Pezzo surfaces of degrees $1$ and $2$, we assume $p \neq 2$.
 
If $d = 2$, the anti-canonical linear system $|-K_X|$ is of dimension $2$. It defines a finite morphism 
of degree $2$ 
$$f:X\to \bbP^2$$
with branch locus a smooth quartic curve. This allows us to view $X$ as a hypersurface of degree $4$
in the weighted projective space $\bbP(1,1,1,2)$. Its equation is 
\beq\label{eq:dp2}
x_3^2+f_4(x_0,x_1,x_2) = 0
\eeq
and its branch locus is the smooth plane quartic curve $C = V(f_4(x_0,x_1,x_2))$. 

If $d = 1$, the linear system $|-K_X|$ is a pencil with one base point $\frako$. The 
linear system $|-2K_X|$ is of dimension $3$. It defines a finite morphism of degree $2$
$$f:X\to Q\subseteq \bbP^3,$$
where $Q$ is an irreducible quadratic cone. The cover is branched along a smooth curve 
of genus 4 cut out by a cubic. This allows us to consider $X$ as a hypersurface of degree 6 in the weighted 
projective space $\bbP(1,1,2,3)$ given by the equation
\[
w^2+f_6(x_0,x_1,x_2,x_3) = 0.
\]
This equation can be rewritten in the form
\beq\label{eq:dp1}
y^2+x^3+a_2(t_0,t_1)x^2+a_4(t_0,t_1)x+a_6(t_0,t_1) = 0,
\eeq
where $a_k$ is a binary form of degree $k$.

The double cover $f$ extends to a double cover 
$$f':Y = \Bl_{\frako}(X)\to \bfF_2,$$
where $\bfF_2 = \bbP(\calO_{\bbP^1}\oplus \calO_{\bbP^1}(-2))$ is a minimal rational ruled surface.
In the natural basis $(\frakf,\frake)$ of $\Pic(\bfF_2)$ with $\frakf^2 = 0, \frake^2 = -2$, the branch curve of $f'$ is the union of a smooth member of the linear system $|6\frakf+3\frake|$ and the exceptional section 
identified with $\frake$. 
Equation \eqref{eq:dp1} can be viewed as 
the Weierstrass model of the Jacobian elliptic fibration $Y\to \bbP^1$ defined by the proper 
transform of the anti-canonical pencil $|-K_X|$.

\subsection{Weyl groups}
An automorphism of a del Pezzo surface $X$ of degree $d$ acts naturally on the Picard group $\Pic(X)$ of isomorphism classes of invertible sheaves 
on $X$, or divisor classes modulo linear equivalence. 
Assume that $X\ne \bfF_0$, so that $X\cong \Bl_{p_1,\ldots,p_{9-d}}(\bbP^2)$. The group $\Pic(X)$ is a free abelian group generated by the divisor classes $e_i$ 
of the exceptional curves $E_i$ over the points $p_i$ and the divisor class $e_0$ such that 
$|e_0|$ defines the blowing down morphism $X\to \bbP^2$. 

 The basis $(e_0,e_1,\ldots,e_{9-d})$ is called 
a \emph{geometric basis} of $\Pic(X)$. It depends on the isomorphism 
$X\cong \Bl_{p_1,\ldots,p_{9-d}}(\bbP^2)$ since different point sets may lead to isomorphic blow-ups. 
If we fix one geometric basis, passing to another geometric basis defines an integer matrix of rank $10-d$. The set of all such matrices forms a subgroup $W$ of $\GL_{10-d}(\bbZ)$. 

The intersection product $\Pic(X)\times \Pic(X)\to \bbZ$ is a symmetric bilinear form 
$(x,y) \mapsto x\cdot y$ 
on $\Pic(X)$. It equips $\Pic(X)$ with the structure of a quadratic lattice. 
Since $e_0^2 = 1, e_i^2 = -1,\ i\ne 0$, the signature of the corresponding real quadratic space 
$\Pic(X)_{\mathbb{R}}:= \Pic(X)\otimes \mathbb{R}$ is equal to $(1,9-d)$. 
A geometric basis is an orthonormal basis for this inner product. This shows that the group $W$ is a subgroup 
of the orthogonal group $\Or(\Pic(X))$ of isometries of the quadratic lattice $\Pic(X)$.

The known behavior of the canonical class under a blow-up implies that 
\beq\label{canclass}
- K_X = 3e_0-\sum_{i=1}^{9-d}e_i,
\eeq 
and this formula is true for any geometric basis. This shows 
that $K_X$ is invariant with respect to the action of the group $W$, and hence 
$W$ can be identified with a subgroup of the orthogonal group $\Or(K_X^\perp)$ of 
the orthogonal complement to $\bbZ K_X$ in $\Pic(X)$.
The sublattice $K_X^\perp\subseteq \Pic(X)$ has a basis 
\beq\label{rootbasis}
 \alpha_0= e_0-e_1-e_2-e_3,\ \alpha_i = e_i-e_{i+1}.\  i = 1,\ldots,8-d.
 \eeq
Assume $d\le 6$. Computing the Gram matrix of this basis, we find that 
$K_X^\perp$ is isomorphic to the root lattice $\sfE_N$ of rank $N = 9-d$ of a simple Lie algebra of type 
$$A_2\oplus A_1, \quad A_4, \quad D_5, \quad E_6, \quad E_7, \quad E_8,$$
in the cases $d = 6,5,4,3,2,1$, respectively.
The basis \eqref{rootbasis} is a basis of simple roots. The subgroup of $\Or(\sfE_N)$ generated 
by the reflections in simple roots is the \emph{Weyl group} $W(\sfE_N)$ of the root lattice.
 Since any reflection in $\alpha_i$ extends to an isometry of the lattice 
$\sfI^{1,N}$ generated by $e_0,\ldots,e_{9-d}$, the group $W(\sfE_N)$ can be identified with the stabilizer subgroup 
$\Or(\sfI^{1,N})_{\bfk_N}$ of the vector $\bfk_N$ given in \eqref{canclass}. 
The homomorphism 
$$\Or(\sfI^{1,N})_{\bfk_N}\to \Or(\sfE_N)$$
is injective. If $N = 7, 8$, it is also surjective. If $N\ne 7, 8$, the image is a direct summand with complement of order 
$2$ generated by the symmetry of the Dynkin diagram of the root lattice $\sfE_N$ \cite[8.2]{CAG}.

Returning to our geometric situation, we see that the natural homomorphism
$$\Aut(X)\to \Or(K_X^\perp), \quad g\mapsto g^*,$$
composed with an isomorphism $\Or(K_X^\perp)\cong \Or(\sfE_{9-d})$ 
defines a homomorphism 
\beq\label{weylrepresentation}
\rho:\Aut(X)\to \Or(\sfE_{9-d}).
\eeq
Since the only automorphism of $\mathbb{P}^2$ that fixes four points in general position is the identity, we have the following theorem \cite[Corollary 8.2.40]{CAG}.

\begin{theorem}\label{thm:weylgroups1} The image of the homomorphism $\rho$ is contained in the Weyl group 
$W(\sfE_N)$. The homomorphism $\rho$ is injective for $d\le 5$.
\end{theorem}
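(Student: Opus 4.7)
The plan is to treat the two assertions separately, using only the lattice-theoretic setup just introduced together with the geometric meaning of the basis vectors $e_i$.

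For the first assertion, any automorphism $g$ of $X$ induces an isometry $g^*$ of the lattice $\Pic(X)\cong \sfI^{1,9-d}$, and since the canonical class is an intrinsic invariant, $g^*(\bfk_N)=\bfk_N$. Hence $g^*$ lies in the stabilizer $\Or(\sfI^{1,N})_{\bfk_N}$, which was identified above (via restriction to $\bfk_N^\perp\cong \sfE_N$) with $W(\sfE_N)\subseteq \Or(\sfE_N)$. This gives containment of the image of $\rho$ in the Weyl group regardless of $d$; note that this is genuine content for small $d$ (such as $d=6,5$), where $W(\sfE_N)$ is a proper subgroup of $\Or(\sfE_N)$.

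For injectivity when $d\le 5$, suppose $g\in\ker\rho$. Then $g^*$ is trivial on $\bfk_N^\perp$, and since it also fixes $\bfk_N$ it is trivial on the finite-index sublattice $\bfk_N^\perp\oplus \bbZ\bfk_N$ of $\Pic(X)$; as $\Pic(X)$ is torsion-free, $g^*$ is the identity on all of $\Pic(X)$. In particular $g^*e_i=e_i$ for $i=0,\ldots,9-d$. Each exceptional curve $E_i$ is the unique effective representative of $e_i$, so $g(E_i)=E_i$. Moreover $g$ preserves the linear system $|e_0|$ and therefore descends to some $\bar g\in\Aut(\bbP^2)$ satisfying $\pi\circ g=\bar g\circ \pi$, where $\pi:X\to \bbP^2$ is the blow-down; this $\bar g$ fixes each $p_i=\pi(E_i)$. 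Since $d\le 5$ yields $9-d\ge 4$ points in general position, the quoted fact about $\bbP^2$ forces $\bar g=\mathrm{id}_{\bbP^2}$, so $\pi\circ g=\pi$. As $\pi$ is an isomorphism over $X\setminus \bigcup_i E_i$, this open dense locus is fixed pointwise by $g$, and continuity gives $g=\mathrm{id}_X$.

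No step is a serious obstacle: the argument is essentially formal once the dictionary between $\Pic(X)$, geometric bases, and the blow-up description is in place. The mildest subtlety is passing from trivial action on $\bfk_N^\perp$ to trivial action on $\Pic(X)$, handled by the finite-index observation combined with torsion-freeness; the only genuinely geometric input is the cited fact that an automorphism of $\bbP^2$ fixing four points in general position must be trivial, which precisely explains why injectivity can fail at $d=6$.
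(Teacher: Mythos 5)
Your proof is correct and follows exactly the route the paper intends: the containment in $W(\sfE_N)$ comes from $g^*$ fixing $K_X$ together with the identification of $\Or(\sfI^{1,N})_{\bfk_N}$ with $W(\sfE_N)$, and injectivity for $d\le 5$ reduces, via fixing the classes $e_i$ and descending to the blow-down, to the quoted fact that an automorphism of $\bbP^2$ fixing four points in general position is the identity. The paper gives only this one-line reduction (citing \cite[Corollary 8.2.40]{CAG}); your write-up is a faithful expansion of the same argument.
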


In particular, via $\rho$, we can consider $\Aut(X)$ as a subgroup of $W(\sfE_N)$. We recall the well-known structure of the relevant Weyl groups. 

\begin{theorem}\label{thm:weylgroups2} The Weyl group of $E_{N}$ with $3 \leq N \leq 8$ can be described as follows:
\begin{itemize}
\item $W(\sfE_3) \cong \frakS_3\times 2$ of order $12$,
\item $W(\sfE_4) \cong \frakS_5$ of order $5!$, 
\item $W(\sfE_5) \cong 2^4:\frakS_5$ of order $2^4\cdot 5!$, 
\item $W(\sfE_6) \cong \Sp_4(3).2$ of order $2^3\cdot 3^2\cdot 6!$, 
\item $W(\sfE_7) \cong \Sp_6(2)\times 2$ of order $2^6\cdot 3^2\cdot 7!$, 
\item $W(\sfE_8) \cong 2.\Or_8^+(2)$ of order $2^7\cdot 3^3\cdot 5\cdot 8!$. 
\end{itemize}
\end{theorem}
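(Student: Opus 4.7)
The plan is as follows. For $N = 3, 4, 5$, the shape of the Dynkin diagram associated to the simple roots in \eqref{rootbasis} identifies $\sfE_N$ with the root lattice of type $A_2 \oplus A_1$, $A_4$, and $D_5$, respectively. The classical descriptions $W(A_n) \cong \frakS_{n+1}$ (permutations of $n+1$ coordinates) and $W(D_n) \cong 2^{n-1} : \frakS_n$ (signed permutations with an even number of sign changes), combined with the fact that the Weyl group of an orthogonal direct sum of root lattices is the direct product of the individual Weyl groups, settle these three cases immediately.

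For the exceptional cases $N = 6, 7, 8$, my approach is to reduce the root lattice modulo a prime $p$ dividing the discriminant and to exploit the induced $W(\sfE_N)$-action on the resulting quadratic or symplectic space over $\bbF_p$. Since $\sfE_N$ is even, reduction modulo $2$ produces an alternating form, and when the lattice is $2$-unimodular, a quadratic refinement $q(x) = \half (x,x) \bmod 2$. For $\sfE_7$ (discriminant $2$), the alternating form on $\sfE_7/2\sfE_7$ has a $1$-dimensional radical, and the non-degenerate $6$-dimensional quotient produces $\rho \colon W(\sfE_7) \to \Sp_6(2)$; the central involution $-1 \in W(\sfE_7)$ (present since all exponents of $\sfE_7$ are odd) lies in $\ker \rho$, and an order comparison forces $\ker \rho = \langle -1 \rangle$ with $\rho$ surjective. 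For $\sfE_8$ (unimodular), the quadratic refinement descends to a non-degenerate form of plus type, yielding $\rho \colon W(\sfE_8) \to \Or_8^+(2)$; again $\ker \rho = \langle -1 \rangle$ and $\rho$ is surjective by orders. For $\sfE_6$ (discriminant $3$ and $-1 \notin W(\sfE_6)$), reduction modulo $3$ produces a $1$-dimensional radical, and the $5$-dimensional non-degenerate quotient yields an embedding $W(\sfE_6) \hookrightarrow \Or_5(3) \cong \Sp_4(3).2$, which an order count shows is an isomorphism.

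The main obstacle is determining the extension type, particularly distinguishing the non-split central extension $W(\sfE_8) = 2.\Or_8^+(2)$ from the direct product $W(\sfE_7) = \Sp_6(2) \times 2$. The decisive invariant is whether the central element $-1$ can be written as a product of an even or odd number of reflections, equivalently a spinor-norm type calculation. For $\sfE_7$ one can exhibit an explicit complement to $\langle -1 \rangle$, for instance by locating a subgroup of $W(\sfE_7)$ on which $-1$ acts non-trivially but which maps isomorphically onto $\Sp_6(2)$, while for $\sfE_8$ one verifies by a direct argument that no such complement exists. The requisite order comparisons themselves are routine via the product formula $|W(\sfE_N)| = \prod_i d_i$ in terms of the degrees of fundamental invariants, matched against the classical group orders recalled earlier.
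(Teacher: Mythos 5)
The paper does not actually prove this theorem: it is stated as a recollection of classical facts about Weyl groups (with the surrounding discussion leaning on \cite[8.2]{CAG} and \cite{ATLAS}), so there is no in-paper argument to compare against. Your sketch is the standard proof from the literature (Bourbaki, Conway--Sloane) and is essentially sound: the cases $N=3,4,5$ follow from the identifications $\sfE_3 \cong A_2\oplus A_1$, $\sfE_4\cong A_4$, $\sfE_5\cong D_5$ exactly as you say, and for $N=6,7,8$ the reduction of the lattice modulo $2$ (resp.\ $3$) with kernel $\langle -1\rangle$ (resp.\ trivial kernel, since $-\id\notin W(\sfE_6)$) is the right mechanism, with the orders matching. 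Two spots deserve more care. First, for $\sfE_7$ the splitting has a one-line proof you should make explicit: $-\id$ has determinant $(-1)^7=-1$, so $W(\sfE_7)=\bigl(W(\sfE_7)\cap \SL_7(\bbR)\bigr)\times\langle -\id\rangle$; by contrast, for $\sfE_8$ the element $-\id$ has determinant $+1$ and non-splitness is genuinely not automatic --- "one verifies by a direct argument that no such complement exists" is the only real gap in your sketch, and it needs either an explicit element of order $4$ squaring to $-\id$ over an involution of $\Or_8^+(2)$ with no involutive lift, or an appeal to the known Schur multiplier of $\Omega_8^+(2)$. Second, in the $\sfE_6$ case the full orthogonal group of the $5$-dimensional quadratic space over $\bbF_3$ has order $2\cdot 51840$ (it is $\langle -\id\rangle\times \SO_5(3)$), so the "order count" identifies the image of $W(\sfE_6)$ only as an index-$2$ subgroup; one must still argue it is one with trivial center, isomorphic to $\PSp_4(3).2$. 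Note also that the paper's notation $\Sp_4(3).2$ is internally inconsistent with its stated order $2^3\cdot 3^2\cdot 6! = 51840$ (the paper's own definition gives $|\Sp_4(3)|=51840$ already); the intended group is $\PSp_4(3).2$, i.e.\ ATLAS's $S_4(3).2$, which is what your orthogonal-group identification actually produces.
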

In ATLAS \cite{ATLAS} notation $\Or_8^+(2) = {\rm GO}_8^+(2)$ contains a normal 
simple subgroup $\Or_8(2)$ of index $2$. The group $\Or_8^+(2)$ is the orthogonal group of the quadratic space 
$\bbF_2^{8}$ equipped with the quadratic form $x_1x_2+x_3x_4+x_5x_6+x_7x_8$.

It turns out that, even in arbitrary characteristic, not all 
cyclic subgroups of $W(\sfE_N)$ are realized by automorphisms of del Pezzo surfaces for $N \geq 3$. To determine which ones are realized, we will use the 
classification of conjugacy classes of elements $w$ of the Weyl groups. This classification can be found 
in \cite{Carter}. Table \ref{tbl:carter} gives the list of conjugacy classes 
of all Weyl groups in Carter's notation. According to Carter, these conjugacy classes are indexed by certain graphs that we call Carter graphs. 
The subscript in the name of a Carter graph indicates the number of vertices of this graph.

Recall that a root lattice $R$ is the orthogonal sum of root lattices of irreducible root systems of types $A_n,D_n,E_6,E_7,$ and $E_8$. If the name of a Carter graph is the name of a Dynkin diagram, say associated to the orthogonal sum of irreducible root lattices $R_1,\ldots,R_k$, then an element $w$ of the corresponding conjugacy class in $W(\sfE_N)$ preserves a sublattice of $\sfE_N$ isomorphic to $R_1 \perp \ldots \perp R_k$. Moreover, this $w$ acts on each summand as the Coxeter element of $W(R_i)$ and as the identity on the orthogonal complement of $R_1 \perp \ldots \perp R_k$.
For example, the notation $A_{i_1}+\cdots + A_{i_k}$ (we write $kA_n$ if $i_1=\cdots=i_k = n$) for a conjugacy class of an element $w\in W(\sfE_N)$  
 means that $w$ leaves invariant a sublattice of $\sfE_N$ isomorphic to the orthogonal sum
 $A_{i_1}\perp\ldots\perp A_{i_k}$ of root lattices of type $A_{i_s}$,
 and it acts on each summand as an 
 element of order $i_s+1$ in $W(A_{i_s})\cong \frakS_{i_s+1}$. 

 The Carter graphs that contain cycles are named $\Gamma(a_i)$, where $\Gamma$ is the name of a Dynkin diagram. These graphs correspond to certain conjugacy classes in $W(\Gamma)$ that do not leave any non-trivial sublattices invariant. We refer the reader to \cite[Table 2]{Carter} for a detailed description of these graphs.

The characteristic polynomials given in Table \ref{tbl:carter} allow us to compute the trace $\tr_2(w)$ of 
$w\in W(\sfE_N)$ acting on the \'etale $l$-adic cohomology $H^2(X,\bbQ_l) \cong \Pic(X)_{\bbQ_l}$. 
Since $w$ acts trivially on $H^0(X,\bbQ_l)$ and $H^4(X,\bbQ_l)$, and $H^i(X,\bbQ_l) = 0$ for odd 
$i$, we can use the Lefschetz fixed point formula 
\beq\label{lefschetz}
\Lef(g^*)= 2+\tr_2(\rho(g)) = e(X^{g})
\eeq
to compute the trace from the Euler-Poincar\'e characteristic of the fixed locus $X^g$ of a tame automorphism $g\in \Aut(X)$.

If $p$ divides the order of $g$, we could instead use Saito's generalized Lefschetz fixed-point formula 
\cite{Saito}. However, we are able to avoid an application of this rather technical tool and compute the traces by 
using geometric methods. This will allow us to determine the conjugacy classes of wild elements $\rho(g)\in W(\sfE_N)$.

\begin{remark}\label{atlas} One can also derive the description of the conjugacy classes of elements of the Weyl groups 
$W(\sfE_{9-d})$ with $ d = 1,2,3$ from \cite{ATLAS}. The disadvantage of the notation there is that it is not uniform for different Weyl groups.
The conjugacy classes in the Weyl group $W(\sfE_6)$ (resp. $W(\sfE_7)$, resp. $W(\sfE_8)$) can be read off from the notation of the conjugacy classes in the group 
$S_4(3)$ (resp. $S_6(2)$, resp. $O_8^+(2)$) in \cite{ATLAS}. For example, a translation from Carter's notation to ATLAS notation for the group
$W(\sfE_6)$ can be found in \cite[Table 6]{DD}.
\end{remark}

\subsection{The Geiser and Bertini involutions} 
It follows from the description of a del Pezzo surface of degree $2$ (resp. $1$) as a double cover that $\Aut(X)$ contains a central element 
$\gamma$ (resp. $\beta$) realized by the negation of $x_3$ (resp. $y$). It is called the \emph{Geiser involution} (resp. 
the \emph{Bertini involution}) of $X$. 

The image of $\gamma$ (resp. $\beta$) under $\rho$ in \eqref{weylrepresentation} is 
the unique non-trivial central element $w_0$ of $W(\sfE_N)$ for $N = 7,8$. It is the unique element of $W(\sfE_N)$ 
of maximal length when written as the reduced product of reflections in simple roots. It acts as $-\id_{\sfE_N}$ if $N = 7,8$.

The action of $g\in \Aut(X)$ fixes the branch curve $Q$ of the double cover $f$ pointwise. Since 
the curve is embedded in $\bbP^2$ (resp. $\bbP(1,1,2)$) by the canonical linear system, the action 
on $Q$ extends to an automorphism of $\bbP^2$ (resp. $\bbP(1,1,2)$). This defines a homomorphism
$$\psi:\Aut(X) \to \Aut(\bbP^2)
\textrm{ \hspace{2mm} (resp. } \psi:\Aut(X) \to \Aut(\bbP(1,1,2)).$$
We have 
$$\Ker(\psi) = (\gamma) \textrm{ \hspace{2mm} (resp. $\Ker(\psi) =  (\beta)$}),
$$
since $Q$ is non-degenerate.
 
\subsection{Wild finite subgroups of $\Aut(\bbP^1)$ and $\Aut(\bbP^2)$}\label{SS:2.4}

A finite subgroup of order $n$ of an algebraic group over a field $\Bbbk$ of characteristic $p > 0$ is called 
\emph{wild} (resp. \emph{tame} or \emph{$p$-regular}) if $p\vert n$ (resp. $(n,p) = 1$). An element of finite order is wild (tame) if it generates 
a wild (tame) cyclic group.

The following proposition follows immediately from Theorem \ref{thm:weylgroups1} and Theorem \ref{thm:weylgroups2}.

\begin{proposition} Let $g$ be a wild automorphism of order $p$ of a del Pezzo surface of degree $1$ or $2$. 
Then $p\in \{2,3,5,7\}$.
\end{proposition}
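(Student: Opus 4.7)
The proof is essentially a consequence of Lagrange's theorem once the Weyl group representation is in hand. The plan is as follows.

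First I would apply Theorem~\ref{thm:weylgroups1}: since $X$ has degree $d\in\{1,2\}\le 5$, the Weyl representation
\[
\rho:\Aut(X)\longrightarrow W(\sfE_{9-d})
\]
is injective, so $\rho(g)$ has order exactly $p$ in $W(\sfE_N)$ with $N=9-d\in\{7,8\}$. By Lagrange, $p$ must divide $|W(\sfE_N)|$, and it therefore suffices to determine the set of prime divisors of the two group orders listed in Theorem~\ref{thm:weylgroups2}.

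Next I would unwind those factorizations. For $N=7$, Theorem~\ref{thm:weylgroups2} gives
\[
|W(\sfE_7)| = 2^6\cdot 3^2 \cdot 7! = 2^{10}\cdot 3^4 \cdot 5 \cdot 7,
\]
while for $N=8$ it gives
\[
|W(\sfE_8)| = 2^7\cdot 3^3 \cdot 5 \cdot 8! = 2^{14}\cdot 3^5 \cdot 5^2 \cdot 7.
\]
In both cases the set of prime divisors is exactly $\{2,3,5,7\}$, so any $p$ dividing the order of $\Aut(X)$ lies in this set.

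There is no genuine obstacle here: the only thing to watch is that $\rho$ is injective in the relevant degree range, which is given by Theorem~\ref{thm:weylgroups1} and uses $d\le 5$. Note that under the paper's standing hypothesis $\mathrm{char}(k)\neq 2$, the conclusion strengthens automatically to $p\in\{3,5,7\}$, but the proposition is stated so as to record the full list of primes that \emph{a priori} could occur from the Weyl group constraint alone.
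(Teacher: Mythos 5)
Your argument is correct and is exactly the paper's proof: the paper states that the proposition "follows immediately from Theorem \ref{thm:weylgroups1} and Theorem \ref{thm:weylgroups2}," i.e.\ injectivity of $\rho$ for $d\le 5$ plus the prime factorizations $|W(\sfE_7)|=2^{10}\cdot 3^4\cdot 5\cdot 7$ and $|W(\sfE_8)|=2^{14}\cdot 3^5\cdot 5^2\cdot 7$. Your explicit factorizations are accurate, so there is nothing to add.
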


We will show later that $p\ne 5,7$ (resp. $p\ne 7$) in the case of del Pezzo surfaces of degree $2$ (resp. $1$).
It is known that there are no wild automorphisms of del Pezzo surfaces of odd order $p^2$ \cite[Theorem 8]{Dolgachev1}.

The image of a wild group $G$ of automorphisms of a del Pezzo surface of degree 2 
under the homomorphism $\psi$ is a wild subgroup of $\Aut(\bbP^2)$. If $G$ is a wild group of automorphisms 
of a del Pezzo surface of degree $1$, then $G$ acts on the pencil $|-K_X|$ and, if the image of a wild 
element of $G$ is non-trivial, it 
defines a wild subgroup of $\Aut(\bbP^1)$.

It is known that any finite subgroup $G$ of $\GL_n(\Bbbk)$ is isomorphic to a finite subgroup of $\GL_n(q)$ for some 
$q = p^m$. The classification of wild finite subgroups of $\Aut(\bbP^1)\cong \PGL_2(\Bbbk)$ can be found in \cite[Chapter 3, \S 6]{Suzuki}. We summarize it here for the convenience of the reader.

\begin{theorem}\label{suz} Let $G$ be a proper wild subgroup of $\PGL_2(\Bbbk)$. Then $G$ is isomorphic to one of the following groups.
\begin{itemize}
\item[(1)] The group $G_{\xi,A}$ of affine transformations $x\mapsto \xi^tx+a$, where $a$ is an element of a 
finite subgroup $A$ of the additive group of $\Bbbk$ containing 1 and $\xi$ is a root of unity such that $\xi A = A.$
\item[(2)]  A dihedral group of order $2n$ with $n$ is odd if $p = 2$.
\item[(3)] ${\rm L}_2(5)\subset {\rm L}_2(9)$ if $p = 3$. 
\item[(4)] ${\rm L}_2(q)$ or $\PGL_2(q)$.
\end{itemize}
\end{theorem}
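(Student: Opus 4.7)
The plan is to reduce the classification to Dickson's theorem on subgroups of $\PGL_2$ over a finite field. Since any finite subgroup $G$ of $\PGL_2(\Bbbk)$ is defined over a finite subfield, we may assume $G \subseteq \PGL_2(q)$ with $q = p^m$; the theorem then amounts to extracting from Dickson's list those subgroups whose order is divisible by $p$. I sketch the geometric argument behind the classification.

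Fix a Sylow $p$-subgroup $P \subseteq G$. Any element of order $p$ in $\PGL_2(\Bbbk)$ is unipotent, hence conjugate to $x \mapsto x+1$, and so has a unique fixed point on $\bbP^1$; its centralizer in $\PGL_2(\Bbbk)$ is the additive group fixing that point. Consequently all elements of $P$ share a common fixed point $x_0$, the group $P$ is elementary abelian, and after choosing coordinates with $x_0 = \infty$ the group $P$ acts as translations $x \mapsto x + a$ for $a$ ranging over a finite additive subgroup $A \subseteq \Bbbk$. The normalizer of $P$ in $\PGL_2(\Bbbk)$ is then the affine stabilizer of $x_0$. If $P$ is normal in $G$, then $G$ lies entirely in this affine group and each element has the form $x \mapsto \xi x + a$ with $\xi A = A$; this is case (1).

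If $P$ is not normal, then $G$ contains distinct Sylow $p$-subgroups $P$ and $P'$ fixing different points of $\bbP^1$, which after a change of coordinates we take to be $\infty$ and $0$. A standard generation argument shows that $P$, $P'$, and the torus $T \subseteq N_G(P) \cap N_G(P')$ together generate a subgroup of $G$ isomorphic to ${\rm L}_2(p^e)$ for some subfield $\bbF_{p^e} \subseteq \Bbbk$, and that $G$ sits between ${\rm L}_2(p^e)$ and $\PGL_2(p^e)$; this is case (4). Cases (2) and (3) are sporadic failures of this generic picture: in characteristic $2$, dihedral groups $D_{2n}$ with $n$ odd have Sylows of order only $2$, too small for a subfield to be forced; in characteristic $3$, the exceptional isomorphism $\frakA_5 \cong {\rm L}_2(5)$ produces the inclusion ${\rm L}_2(5) \subset {\rm L}_2(9)$ that does not arise from the generic construction.

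The main obstacle is the subfield extraction in case (4): one must show that the additive group $A$ of translations of $P$ is preserved by enough elements of $T$ to be closed under multiplication as well as addition, and therefore forms a subfield of $\Bbbk$. This is proved by exploiting the compatibility between the action of $T$ on $A$ and on the corresponding subgroup $A' \subseteq \Bbbk$ attached to $P'$, forcing the multipliers arising from $T$ to act as the multiplicative group of a field containing $A$. The sporadic cases (2) and (3) correspond precisely to the low-order situations where this forcing fails, and they are handled by direct inspection of the short list of small wild subgroups of $\PGL_2(q)$.
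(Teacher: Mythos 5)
First, a point of comparison: the paper does not prove Theorem \ref{suz} at all. It is stated as a summary of the classification in \cite[Chapter 3, \S 6]{Suzuki} (ultimately Dickson's theorem on the subgroups of $\PGL_2$ of a finite field), so there is no in-paper argument against which to measure yours; what you have written is an outline of the standard external proof. That outline is structurally correct: the reduction to $G\subseteq \PGL_2(q)$, the observation that $p$-elements are unipotent with a unique fixed point so that a Sylow $p$-subgroup $P$ is elementary abelian and identified with a finite additive subgroup $A\subseteq \Bbbk$, and the dichotomy according to whether $P$ is normal (giving the affine groups of item (1), after noting that the translation subgroup of $G$ is exactly $P$ and that the prime-to-$p$ cyclic image in $\Bbbk^{\times}$ splits off by Schur--Zassenhaus) or not (aiming at ${\rm L}_2(p^e)$ and $\PGL_2(p^e)$) is exactly how Dickson and Suzuki proceed.

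However, as a proof your proposal has a genuine gap precisely where the theorem is hard. In the non-normal case you invoke ``a standard generation argument'' to conclude that $P$, $P'$ and a torus generate ${\rm L}_2(p^e)$, and you defer the exceptions to ``direct inspection of the short list of small wild subgroups of $\PGL_2(q)$.'' But that generation statement, together with the identification of $A$ with a subfield, \emph{is} Dickson's theorem: it is at this step that one must carry out the case analysis (on $|A|$, on the order of a torus normalizing two opposite unipotent subgroups, on the number of Sylow $p$-subgroups) which both forces $A=\bbF_{p^e}$ in the generic case and \emph{discovers} the exceptions (2) and (3). Appealing to the ``short list of small wild subgroups'' to dispose of the exceptional cases is circular, since that list is exactly what the theorem asserts. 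So the proposal is a faithful road map of the proof in \cite{Suzuki} rather than a proof; to make it self-contained you would need to supply the actual counting and generation argument for $\langle P,P'\rangle$ and verify that $\frakA_5\cong{\rm L}_2(5)\subset {\rm L}_2(9)$ for $p=3$ and the dihedral groups of twice odd order for $p=2$ are the only configurations in which it fails.
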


The classification of wild subgroups of $\Aut(\bbP^2)\cong \PGL_3(\Bbbk) \cong \PSL_3(\Bbbk)$ is also known. 
 It follows from the classification of conjugacy classes of subgroups of the groups 
 $\PSL_3(q)$  \cite{Bloom}, \cite{Mitchell}. Again, we summarize it here for the convenience of the reader.

\begin{theorem}\label{bloom}  Assume $p > 2$. Let $G$ be a finite subgroup of 
$\PGL_3(k)$.
Then $G$ is conjugate to one of the following groups.
\begin{enumerate}
\item  $\PGL_3(q)$ or ${\rm L}_3(q)$.
\item $\PGU_3(q^2)$ or $\PSU_3(q^2)$.
\item A group containing type (1) with $3 \mid (q-1)$ as a normal subgroup of index $3$.
\item A group containing type (2) with $3 \mid (q+1)$ as a normal subgroup of index $3$.
\item $\PGL_2(q)$ or ${\rm L}_2(q)$ with $q \neq 3$.
 \item If $p \neq 5$: ${\rm L}_2(5) \cong \frakA_5$.
 \item If $p \neq 7$: ${\rm L}_2(7)$.
 \item ${\rm L}_2(9) \cong \frakA_6$.
\item If $p = 5$: A group containing $\frakA_6$ of index $2$.
 \item If $p = 5$: $\frakA_7$.
\item A group containing a normal cyclic tame subgroup of index $\le 3$.
\item A group containing a diagonal normal subgroup $H$ such that $G/H$ is isomorphic to a subgroup of 
$\frakS_3$.
\item A group whose inverse image $\tilde{G}$ in $\SL_3(\Bbbk)$ has a normal elementary abelian $p$-subgroup 
$H$ such that $\tilde{G}/H$ is a subgroup of  $\GL_2(q)$.
\item If $p \neq 3$: The Hessian group $3^2:\SL_2(3)$ of order $216$ or its subgroups containing $3^2$.
\item If $p \neq 3$: The group $3^2:Q_8$ or its subgroups containing $3^2$.
\end{enumerate}
\end{theorem}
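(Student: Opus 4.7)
The plan is to combine a reduction to finite fields with Mitchell's classification of subgroups of $\PSL_3(q)$, together with the Klein--Blichfeldt classification of finite subgroups of $\PGL_3(\mathbb{C})$ for the tame case.

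First I would reduce to the finite-field setting. Since $G$ is finite, its matrix entries (for any choice of lifts) generate a finitely generated $\bbF_p$-subalgebra of $\bfk$, so there is a power $q = p^m$ with $G\subseteq \PGL_3(q)$. Let $\tilde{G}\subseteq \SL_3(\bfk)$ be the preimage of $G$ under $\SL_3(\bfk)\to \PGL_3(\bfk)$; it is a central extension of $G$ by a subgroup of the group $\bmu_3$ of scalar matrices in $\SL_3$. All subsequent arguments are carried out for $\tilde{G}$, and then projected to $G$.

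Next I would split into two cases according to whether the unipotent radical $U := O_p(\tilde{G})$ is trivial. If $U\ne 1$, then $U$ consists of unipotent matrices and has a non-zero common fixed subspace $V\subseteq \bfk^3$; by normality $V$ is $\tilde{G}$-stable. Choosing $V$ minimal, the quotient action and the induced action on $V$ are both semisimple, and a Levi decomposition argument puts $\tilde{G}$ in case (13) (with $H = U$ elementary abelian $p$-subgroup and $\tilde{G}/H\subseteq \GL_2(q)$ acting on the invariant flag). If $U = 1$ but $\tilde{G}$ fixes a proper subspace of $\bfk^3$, then $\tilde{G}$ stabilizes a line or a plane, and the kernel of the action on the stable subspace together with its complement produces a diagonal normal subgroup or a normal cyclic tame subgroup, yielding cases (11) and (12) (the factor group $\frakS_3$ coming from permuting the eigenlines when $\tilde{G}$ is monomial).

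The core of the argument is the remaining case where $\tilde{G}$ is irreducible on $\bfk^3$ and $O_p(\tilde{G}) = 1$. When $p\nmid |G|$, the group $\tilde{G}$ lifts to characteristic zero by Maschke/Brauer lifting, so that the classical Klein--Blichfeldt classification of finite subgroups of $\PGL_3(\bbC)$ applies verbatim. This produces the tame exceptional groups (6), the Klein group $\rm L_2(7)$ in (7), $\frakA_6 = \rm L_2(9)$ in (8), and the Hessian groups in (14) and (15); dihedral, tetrahedral and subfield lifts are subsumed by (5) and (11). When $p\mid |G|$ but $O_p(\tilde{G}) = 1$, one invokes Mitchell's theorem \cite{Mitchell} (and its refinement by Bloom \cite{Bloom}) on the maximal subgroups of $\PSL_3(q)$: the analysis of centralizers of involutions, together with Sylow-theoretic control of the $p$-subgroups inside the Borel subgroup of $\PGL_3(q)$, forces $G$ to be conjugate to one of the classical groups $\rm L_3(q),\PGL_3(q),\PSU_3(q^2),\PGU_3(q^2)$ of (1)--(4), or to a subfield subgroup $\rm L_2(q),\PGL_2(q)$ of (5), with the small-characteristic exceptions $\frakA_6, \frakA_7$ in (9)--(10) arising because the Sylow $5$-subgroups force additional fusion only when $p = 5$. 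Extensions by field/graph automorphisms account for the index-$3$ enlargements in (3)--(4).

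The main obstacle is this last step: controlling irreducible finite subgroups of $\PGL_3(\bfk)$ with wild elements but trivial unipotent radical. Here one cannot lift to characteristic zero, and one must instead rule out hypothetical ``new'' groups by a delicate analysis of how involutions and elements of order $p$ interact, showing that the fixed points and fixed lines of involutions are permuted in a way compatible only with the list above. This is the technical heart of \cite{Mitchell} and \cite{Bloom}; I would cite it rather than reprove it, and concentrate the proof on verifying that the reducible and unipotent-radical cases produce exactly (11)--(13), and on matching the Blichfeldt list with cases (6)--(8), (14), (15).
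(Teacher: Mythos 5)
Your proposal is correct in outline and takes essentially the same route as the paper: the paper offers no proof of this theorem beyond reducing a finite subgroup of $\PGL_3(\Bbbk)\cong\PSL_3(\Bbbk)$ to a subgroup of some $\PSL_3(q)$ and then citing the classifications of Mitchell and Bloom, which is exactly where your argument also places the technical weight. The extra scaffolding you supply (the dichotomy on $O_p(\tilde{G})$, reducibility, and lifting in the tame case) is a reasonable summary of how those references proceed, with the minor caveat that $O_p(\tilde{G})$ need not be elementary abelian --- it can be a Heisenberg group --- so landing in case (13) may require passing to a suitable elementary abelian normal subgroup rather than taking $H=U$ directly.
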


\section{Del Pezzo surfaces of degree $\ge 3$}\label{S:3}
For the convenience of the reader, we recall here the classification of automorphisms groups of del Pezzo surfaces of degree at least $3$.

\subsection{Del Pezzo surfaces of degree $\ge 5$}
The computation of automorphism groups of del Pezzo surfaces of degree $\ge 5$ is 
characteristic free and can be found, for example, in \cite[Chapter 8]{CAG}:

\subsubsection{Degree $9$}
If $d = 9$, then $X\cong \bbP^2$ and $|K_X| = |\calO_{\bbP^2}(3)|$. The embedding 
$\nu:\bbP^2\hookrightarrow \bbP^9$ coincides with a Veronese embedding. The group of automorphisms
of $X$ coincides with the projective linear group $\PGL_3(\Bbbk)$.

\subsubsection{Degree $8$}
If $d = 8$, then $X\cong \bbP^1\times \bbP^1$, or $X\cong \bfF_1$, the blow-up 
of one point in $\bbP^2$. 

If $X\cong \bbP^1\times \bbP^1$, the anti-canonical map is given by the linear system 
$|-K_X| = |\calO_{\bbP^1\times \bbP^1}(2,2)|$.  It coincides with the Segre--Veronese map and embeds 
$\bbP^1\times \bbP^1$ into $\bbP^8$. 
The group $\Aut^0(X)$ coincides with $\PGL_2(\Bbbk)\times \PGL_2(\Bbbk)$, and the quotient group 
$\Aut(X)/\Aut^0(X)$ is of order 2, generated by switching the factors of the product 
$\bbP^1\times \bbP^1$. 

If $X\cong \bfF_1 = {\rm Bl}_{p_1}(\mathbb{P}^2)$, the anti-canonical system is $|-K_X| = |\calO_{\bbP^2}(3)-p_1|$. 
The anti-canonical linear system defines a map that embeds $X$ into $\bbP^8$. 
Its image is equal to the projection 
of the Veronese surface of degree $9$ from a point lying on it. The group $\Aut(X)$ is isomorphic to 
the stabilizer subgroup of $\{p_1\}$ in $\PGL_3(\Bbbk) = \Aut(\bbP^2)$. It is a solvable algebraic 
subgroup of $\PGL_3(\Bbbk)$ of dimension $6$.

\subsubsection{Degree $7$}

If $d = 7$, then $X\cong \Bl_{p_1,p_2}(\bbP^2)$ and $|-K_X| = |\calO_{\bbP^2}(3)-p_1-p_2|$. 
The anti-canonical linear system defines a map that embeds $X$ into $\bbP^7$. The image is the projection 
of the Veronese surface of degree 9 from two points lying on it.  The group $\Aut(X)$ is isomorphic to 
the stabilizer subgroup of $\{p_1,p_2\}$ in $\PGL_3(\Bbbk) = \Aut(\bbP^2)$. It is a solvable algebraic subgroup of $\PGL_3(\Bbbk)$ of dimension $4$. The quotient group $\Aut(X)/\Aut^0(X)$ is of order $2$. 
It is generated by a projective involution that switches the points $p_1,p_2$.

\subsubsection{Degree $6$}

If $d = 6$, then $X\cong \Bl_{p_1,p_2,p_3}(\bbP^2)$ and $|-K_X| = |\calO_{\bbP^2}(3)-p_1-p_2-p_3|$, where the points 
$p_1,p_2,p_3$ are not collinear. The group $\Aut^0(X)$ is isomorphic 
to the two-dimensional torus $\bbG_{m,\Bbbk}^2$. In appropriate projective coordinates where
$p_1 = [1,0,0], p_2 = [0,1,0],$ and $ p_3 = [0,0,1]$, the group  $\Aut^0(X)$ consists of transformations 
$(x_0,x_1,x_2)\mapsto (\lambda x_0,\mu x_1,\gamma x_2)$. The quotient group is isomorphic to 
$\frakS_3\times 2$. It is generated by projective transformations which permute the points and the 
standard quadratic Cremona transformation $(x_0,x_1,x_2)\mapsto (x_1x_2,x_0x_2,x_0x_1)$.

\subsubsection{Degree $5$}
If $d = 5$, then $X\cong \Bl_{p_1,p_2,p_3,p_4}(\bbP^2)$ and $|-K_X| = |\calO_{\bbP^2}(3)-p_1-p_2-p_2-p_4|$.
 In this case $\Aut^0(X) = \{1\}$ and $\Aut^0(X) \cong \frakS_5$. The group is generated by its subgroups
 of projective transformations that permute the four points and also a quadratic Cremona transformation 
 of order $5$
 \beq\label{cremonadP5}
(x_0,x_1,x_2) \mapsto (x_0(x_2-x_1),x_2(x_0-x_1),x_0x_2).
\eeq
where we use projective coordinates such that $p_1,p_2,p_3$ are as in the previous case, and $p_4 = [1,1,1]$.

\subsection{Quartic del Pezzo surfaces} \label{S:3.2}
This is the first case where the classification of automorphism groups depends on the characteristic $p$ of the ground field. We assume $p \neq 2$ and refer the reader to \cite{DMeven} for the case of $p = 2$.

A quartic del Pezzo surface $X$ is isomorphic to the blow-up $\Bl_{p_1,\ldots,p_5}(\bbP^2)$, where no 
three points are collinear. The anti-canonical linear system 
$|-K_X| = |\calO_{\bbP^2}(3)-p_1-p_2-p_3-p_4-p_5|$ embeds $X$ into $\bbP^4$ as a complete intersection of 
two quadrics.

If $p = 0$, the classification of automorphism groups can be found in \cite[8.6]{CAG} and if $p \neq 0$, it was accomplished in \cite[\S 3]{DD}. Since $p \neq 2$, the two quadrics can be simultaneously 
diagonalized so that $X$ is given by equations 
\beq\label{quartcdP}
t_1^2+t_2^2+t_3^2+at_4^2 = t_0^2+t_2^2+bt_3^2+t_4^2 = 0,\
\eeq
where the binary form $\Delta = uv(u-v)(u-av)(bu-v)$ has no multiple roots.
 
 The automorphism group contains a normal subgroup $H$, isomorphic 
to $2^4$ and generated by the transformations $(t_0,\ldots,t_4)\mapsto (\pm t_0,\ldots,\pm t_4)$. The quotient group 
$\Aut(X)/H$ is isomorphic to the subgroup $G$ of $\PGL_2(\Bbbk) \cong \Aut(\bbP^1)$ that leaves the set $V(\Delta)$ invariant.
The classification is summarized in Table \ref{tbl:autodp4} in the Appendix. There, the first column refers to the values of the parameters $a$ and $b$ in Equation \eqref{quartcdP} above. The conjugacy classes of elements of $\Aut(X)$ can be obtained by combining \cite[Table 2]{DD} and \cite[Table 5]{Carter}.

\subsection{Cubic surfaces} \label{S:3.3}
A del Pezzo surface of degree $3$ is isomorphic to the blow-up of 6 points 
$p_1,\ldots,p_6$, where no three points are collinear and not all of them lie on a conic.
The anti-canonical linear system $|-K_X| = |\calO_{\bbP^2}(3)-p_1-p_2-p_3-p_4-p_5-p_6|$ embeds $X$ into $\bbP^3$. Its image is a cubic surface.

As we remarked in the introduction, the classification of automorphism groups of smooth cubic surfaces in characteristic 
$0$ has been known essentially since the 19th century (see the most recent exposition in \cite[9.5]{CAG}). In the case of positive characteristic it can be found in \cite{DD}. We present the results of this 
classification in Table \ref{tbl:cubics} in the Appendix. Our table is obtained by translating \cite[Table 7]{DD} from ATLAS to Carter notation. We also give the name of the corresponding type in \cite[Table 9.6]{CAG} for the convenience of the reader.

\section{Del Pezzo surfaces of degree $2$}
In this section, we are concerned with the classification of automorphism groups of del Pezzo surfaces of degree $2$. Recall from Section \ref{SS:2.1} that every del Pezzo surface $X$ of degree $2$ is a double cover of $\mathbb{P}^2$ branched over a smooth quartic $C$. Moreover, we have the product decomposition $\Aut(X) \cong 2 \times \Aut(C)$, which follows from the corresponding decomposition of $W(\sfE_7)$ (see Theorem \ref{thm:weylgroups2}), so classifying automorphism groups of del Pezzo surfaces of degree $2$ is equivalent to classifying automorphism groups of smooth plane quartic curves.

\subsection{List of possible groups}
We know from Section \ref{SS:2.4} that a wild automorphism of $X$ can occur only if $p\in \{3,5,7\}$. 
 The next lemma eliminates the cases $p = 5$ and $p = 7$.

\begin{lemma}\label{lem:wildcyclic} 
Assume that an element $g$ of order $p$ acts non-trivially on a del Pezzo surface $X$ of degree $2$. Then, $p = 3$. 
\end{lemma}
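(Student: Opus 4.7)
The plan is to exploit the faithful action of $g$ on the branch quartic $C \subset \bbP^2$ of the double cover $f$. From the product decomposition $\Aut(X) \cong 2 \times \Aut(C)$ mentioned earlier, any element of odd order $p$ descends to a nontrivial automorphism $\sigma$ of $C$ of order $p$; since $C$ is canonically embedded, $\sigma$ extends to a nontrivial element $\widetilde\sigma \in \PGL_3(\Bbbk)$ of order $p$ preserving $C$. By the preceding proposition we have $p \in \{3, 5, 7\}$, so I only need to rule out $p = 5$ and $p = 7$.

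The element $\widetilde\sigma$ admits a unique unipotent lift to $\GL_3(\Bbbk)$. My first step is to apply Riemann--Hurwitz to the quotient $C \to C/\langle \sigma \rangle$:
\[
4 = p(2g' - 2) + D,
\]
where $g'$ is the genus of the quotient and $D$ is the degree of the different. Wild ramification theory gives a contribution $(p-1)(m+1)$ at each fixed point of $\sigma$, where $m \geq 1$ is an integer coprime to $p$. For $p = 5$, the only numerical possibilities are $D = 14$ (if $g' = 0$) or $D = 4$ (if $g' = 1$), while admissible fixed-point contributions lie in $\{8, 12, 16, 20, 28, \dots\}$; no combination sums to $14$ or $4$, so $p = 5$ is impossible. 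For $p = 7$, the only numerical solution is $g' = 0$ with a single fixed point $P_0 \in C$ contributing $18$ (so $m = 2$).

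To dispose of the surviving $p = 7$ case --- the main obstacle --- I examine the unipotent lift of $\widetilde\sigma$ directly. It has Jordan type either $(2,1)$ or $(3)$. In type $(2,1)$, with coordinates so that the lift is $(x_0, x_1, x_2) \mapsto (x_0 + x_1, x_1, x_2)$, an invariant quartic $F = \sum_{i=0}^{4} P_i(x_1, x_2)\, x_0^i$ satisfies $\sum_{i > j} \binom{i}{j} P_i\, x_1^{i-j} = 0$ for each $0 \leq j \leq 3$; since $\binom{4}{3}, \binom{3}{2}, \binom{2}{1}$ are all nonzero modulo $7$, we get $P_1 = P_2 = P_3 = P_4 = 0$, and $V(F)$ is a union of four concurrent lines, hence singular. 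In type $(3)$, the unique fixed point is $P_0 = [1:0:0]$ and $\ell = \{x_2 = 0\}$ is the only $\widetilde\sigma$-invariant line in $\bbP^2$; since $P_0 \in C$ (by the Riemann--Hurwitz analysis) and $T_{P_0}C$ is a $\widetilde\sigma$-invariant line, we must have $T_{P_0}C = \ell$. Writing $F = c\, x_0^3 x_2 + x_0^2 L_2 + x_0 L_3 + L_4$ with $c \neq 0$ (smoothness at $P_0$), the invariance $F(x_0 + x_1, x_1 + x_2, x_2) = F$ applied to the coefficient of $x_0^2 x_1 x_2$ pins down the $x_1^2$-coefficient of $L_2$ as $-\tfrac{3c}{2}$, and then the coefficient of $x_0 x_1^3$ forces $3c = 0$. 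Since $3 \neq 0$ in characteristic $7$, this contradicts $c \neq 0$, completing the proof.
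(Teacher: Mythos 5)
Your proof is correct, and for the hard case it takes a genuinely different route from the paper's. Both arguments begin the same way: an element of odd order $p$ descends to an order-$p$ element of $\PGL_3(\Bbbk)$ preserving $C$, one reduces to the two unipotent Jordan types, and in the subregular type $(x_0,x_1,x_2)\mapsto(x_0+x_1,x_1,x_2)$ both arguments show an invariant quartic is a binary form in $x_1,x_2$, hence a cone. The divergence is in the regular unipotent case: the paper invokes Wehlau's explicit generators of the modular invariant ring $\Bbbk[x_0,x_1,x_2]^{C_p}$ and checks that every degree-$4$ invariant is singular at $[1,0,0]$ whenever $p>3$, a uniform but computation-heavy argument resting on a nontrivial external input. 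You instead apply Riemann--Hurwitz with wild ramification to $C\to C/\langle\sigma\rangle$, which kills $p=5$ outright on purely numerical grounds (all different contributions are $\equiv 0 \bmod 4$ and $\geq 8$, while $D\in\{14,4\}$) and, for $p=7$, pins down a unique fixed point on $C$ with forced tangent line $V(x_2)$; the contradiction then comes from comparing just two coefficients ($x_0^2x_1x_2$ giving $\alpha=-3c/2$ and $x_0x_1^3$ giving $\alpha=0$) of the invariance relation. Your route trades the invariant-theoretic machinery for standard wild-ramification theory plus a two-line local computation, at the cost of being less uniform in $p$; the paper's route handles all $p>3$ in one stroke but needs the full structure of the invariant ring. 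Both are complete; the only compressed step in yours is the final "forces $3c=0$", which in fact goes through $2\alpha=0$ first, but since $p\neq 2$ this is harmless.
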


\begin{proof} Since $p$ is odd, $g$ leaves invariant the 
plane quartic $C$. There are two possible Jordan forms for a cyclic linear automorphism of $\mathbb{P}^2$ of
order $p$, namely $(x_0,x_1,x_2) \mapsto (x_0+x_1,x_1,x_2)$ and $(x_0,x_1,x_2)\mapsto (x_0+x_1,x_1+x_2,x_2)$. 
Since $g(C) = C$, and the cyclic group $(g)$ admits no non-trivial characters, the equation of $C$ is $g$-invariant. 

By \cite[Section 10.3]{Wehlau}, the invariant ring in the first case is
$$
\Bbbk[x_0,x_1,x_2]^{(g)} = \Bbbk[N(x_0),x_1,x_2],
$$
where $N(x_0) = x_0(x_0+x_1)\cdots (x_0 + (p-1)x_1)$. If $p \neq 3$, the degree of 
$N(x_0)$ is larger than $4$, hence $C$ is a union of lines through $[1,0,0]$, which is absurd.
 
By \cite[Theorem 10.5]{Wehlau}, the invariant ring in the second case is
$$
\Bbbk[x_0,x_1,x_2]^{(g)} = \Bbbk[N(x_0),x_1^2 - 2x_0x_2 - x_1x_2,x_2,
\{\tr(x_1^bx_0^c)\}_{0 \leq b \leq 1, 0 \leq c \leq p-1}],
$$
where $N(x_0) = \prod_{i=0}^{p-1}(x_0 + ix_1 + \frac{i^2 - i}{2}x_2)$ and $\tr(x_1^bx_0^c) = \sum_{i=0}^{p-1} (x_1 + ix_2)^b(x_0 + ix_1 + \frac{i^2 - i}{2}x_2)^c$. Recall that 
$$
\sum_{i = 0}^{p-1} i^n = 
\begin{cases}
-1 \text{ if } (p-1) \mid n \\
0 \text{ else}
\end{cases}
$$
Hence, if $p > 3$, all the polynomials $\tr(x_1^bx_0^c)$ with $b + c \leq 4$ have multiplicity at 
least $c - 2$ at $[1,0,0]$, so every element of $$\Bbbk[x_0,x_1,x_2]^{C_p}_4 = \Bbbk[x_1^2 - 2x_0x_2 - x_1x_2,x_2,\{\tr(x_1^bx_0^c)\}_{0 \leq b \leq 1, 0 \leq c \leq p-1, b+c \leq 4}]_4$$ defines a quartic which is singular at $[1,0,0]$. Therefore, we must have $p = 3$. 
\end{proof}

\begin{corollary} \label{cor: order57}
There exists no smooth plane quartic with an automorphism of order $5$.
\end{corollary}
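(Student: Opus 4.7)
The plan is to split the argument according to the characteristic. If $\mathrm{char}(k) = 5$, then an order-$5$ automorphism of $C$ is wild; since $C$ is canonically embedded in $\bbP^2$, it extends linearly to $\bbP^2$ and hence lifts to a non-trivial order-$5$ automorphism of the double cover $X \to \bbP^2$ branched along $C$, which is a del Pezzo surface of degree $2$. This contradicts Lemma \ref{lem:wildcyclic}, whose conclusion $p = 3$ is incompatible with $p = 5$.

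In the tame case $\mathrm{char}(k) \neq 5$, the element $g \in \PGL_3(k)$ is diagonalisable of order $5$, so after a coordinate change we may write $g = \mathrm{diag}(1, \zeta^a, \zeta^b)$, with $\zeta$ a primitive fifth root of unity and $(a, b) \in (\bbZ/5)^2 \setminus \{(0, 0)\}$. Replacing $g$ by a power of itself and permuting coordinates reduces the list of pairs to the four representatives $(0,1)$, $(1,1)$, $(1,2)$, $(1,4)$. For each case I would decompose the space of quartic forms into $\langle g \rangle$-character eigenspaces $\mathrm{Sym}^4(V^*)^{\chi^c}$, in the spirit of Lemma \ref{lem:wildcyclic}, and show that no eigenspace contains a smooth quartic. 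For the first two representatives, where $g$ has a two-dimensional eigenspace, every eigenspace is either divisible by a coordinate function or consists of polynomials factoring into linear forms through a common point, so the corresponding curves are reducible or non-reduced. For the remaining two pairs, the fixed locus of $g$ on $\bbP^2$ is exactly the three coordinate points $[1{:}0{:}0], [0{:}1{:}0], [0{:}0{:}1]$, each eigenspace of $\mathrm{Sym}^4(V^*)$ has exactly three monomials, and a direct calculation of $F$ and its partial derivatives at each coordinate point shows that, regardless of the coefficients, at least one coordinate point lies on $V(F)$ with all three partial derivatives vanishing there. Hence every $g$-invariant quartic is singular at a coordinate fixed point.

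The main obstacle is the bookkeeping for $(1,2)$ and $(1,4)$, which together require checking ten eigenspaces. A conceptual alternative (which also covers characteristic zero uniformly) applies Riemann--Hurwitz to $C \to C/\langle g\rangle$ to force a unique fixed point $p$ of $g$ on $C$, with $g$ acting on $T_p C$ by a nontrivial fifth root of unity $\lambda$; the holomorphic Lefschetz fixed-point formula then yields $\tr(g^* \mid H^1(C, \calO_C)) = (1-\lambda)^{-1}$. Since the trace lies in $\bbZ[\zeta_5]$, while $N_{\bbQ(\zeta_5)/\bbQ}(1 - \zeta^k) = 5$ prevents $(1-\zeta^k)^{-1}$ from being an algebraic integer, this is a contradiction.
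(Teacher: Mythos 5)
Your proof is correct, and for the tame case it takes a genuinely different route from the paper. The $p=5$ half is exactly the paper's argument: the order-$5$ automorphism extends to $\bbP^2$ by the canonical embedding, lifts to the degree-$2$ del Pezzo double cover branched along $C$, and contradicts Lemma \ref{lem:wildcyclic}. For $p\neq 5$, however, the paper simply lifts the pair $(C,g)$ to characteristic $0$ and cites the classical fact that no smooth plane quartic over $\mathbb{C}$ has an automorphism of order $5$, whereas you prove that fact directly and uniformly in every characteristic $\neq 5$ by diagonalising $g$, decomposing the quartic forms into $\langle g\rangle$-eigenspaces, and showing each eigenspace consists of singular curves. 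I checked the bookkeeping you deferred: your four representatives do exhaust all cases (with redundancy, since the affine group of $\bbZ/5$ is $2$-transitive, so all eigenvalue triples with three distinct entries are equivalent, and $(0,1)\sim(1,1)$ as well), and in the cases $(1,2)$ and $(1,4)$ each of the ten three-monomial eigenspaces has all its monomials vanishing to order at least $2$ at a single common coordinate point, so the curve is singular there independently of the coefficients and of the characteristic. Your Riemann--Hurwitz/holomorphic-Lefschetz alternative is also sound in characteristic $0$: the quotient has genus $1$ with exactly one fixed point, and $(1-\zeta_5^k)^{-1}$ has norm $1/5$, so it cannot equal the algebraic integer $\operatorname{tr}(g^*\mid H^1(C,\calO_C))$. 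What your route buys is self-containedness (no appeal to the characteristic-$0$ classification) and a single argument valid in all $p\neq 5$; what the paper's route buys is brevity and consistency with how it handles the rest of the tame classification, namely by reduction to characteristic $0$ via \cite{Serre}.
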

\begin{proof}
By Lemma \ref{lem:wildcyclic}, this holds if $p = 5$. For $p \neq 5$, we can lift the curve to characteristic $0$ together with its automorphism of order $5$. But in characteristic $0$, there are no smooth quartics with automorphisms of order $5$, a contradiction.  
\end{proof}

Since we also know that $\Aut(C)$ does not contain automorphisms of order $p^2$, we obtain the following.

\begin{corollary} \label{cor: possibleorders} 
Let $G$ be a finite group that acts faithfully on a smooth plane quartic. Then, the following hold.
\begin{enumerate}
    \item The only prime divisors of $|G|$ are $2,3,$ and $7$.
    \item If $G$ is wild, then $p = 3$.
\end{enumerate}
\end{corollary}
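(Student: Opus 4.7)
The plan is to handle the two parts in sequence, using Cauchy's theorem to reduce divisibility of $|G|$ to the existence of elements of prime order.

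Part (2) should be essentially immediate. If $G$ is wild then $p \mid |G|$ by definition, and Cauchy's theorem produces a $g \in G$ of order $p$. Since $G$ acts faithfully on the associated del Pezzo surface $X$, this $g$ acts non-trivially on $X$, so Lemma \ref{lem:wildcyclic} forces $p = 3$.

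For part (1), I will let $\ell$ be a prime dividing $|G|$ and take $g \in G$ of order $\ell$. The case $\ell = 2$ is immediate since $2 \in \{2,3,7\}$. The case $\ell = p$ (necessarily odd, since $p \neq 2$) reduces to part (2) and yields $\ell = 3$. Hence I may assume $\ell \neq p$ and $\ell \geq 3$. Using the product decomposition $\Aut(X) \cong 2 \times \Aut(C)$, with the $2$-factor generated by the Geiser involution, the element $g$ of odd prime order must lie in the $\Aut(C)$ factor and therefore act tamely and non-trivially on the smooth plane quartic $C$ of genus $3$. Applying the Riemann-Hurwitz formula to the quotient map $C \to C/\langle g \rangle$, with $g'$ the genus of the quotient and $r$ the number of branch points (each carrying a single ramification point of index $\ell$, by tameness and the primality of $\ell$), one obtains
$$4 = \ell(2g' - 2) + r(\ell - 1).$$
A short case split on $g'$ narrows $\ell$ to $\{2,3,5,7\}$: for $g' \geq 2$ the right-hand side is at least $2\ell \geq 6 > 4$; for $g' = 1$ the equation becomes $r(\ell - 1) = 4$, forcing $\ell - 1 \mid 4$; and for $g' = 0$ it rewrites as $(r - 2)(\ell - 1) = 6$, forcing $\ell - 1 \mid 6$. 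Finally, Corollary \ref{cor: order57} eliminates $\ell = 5$, leaving $\ell \in \{2,3,7\}$.

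I do not foresee any genuine obstacle: the two substantive inputs, Lemma \ref{lem:wildcyclic} and Corollary \ref{cor: order57}, have already been established, and what remains is a purely elementary diophantine analysis of Riemann-Hurwitz. A cleaner but less self-contained alternative to the Riemann-Hurwitz step would be to lift the tame cyclic action of order $\ell$ to characteristic zero via Serre's lifting theorem (as invoked in the introduction) and then quote the classical list of orders of automorphisms of smooth complex plane quartics recorded in \cite{CAG}, which contains no prime divisor outside $\{2,3,7\}$.
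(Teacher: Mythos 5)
Your argument is correct, and part (2) matches the paper exactly (wild $\Rightarrow$ contains an element of order $p$ $\Rightarrow$ $p=3$ by Lemma \ref{lem:wildcyclic}). For part (1), however, you take a genuinely different route. The paper simply observes that $G$ embeds into $W(\sfE_7)$ via the representation $\rho$ of Theorem \ref{thm:weylgroups1}, so every prime divisor of $|G|$ divides $|W(\sfE_7)| = 2^{10}\cdot 3^4\cdot 5\cdot 7$; this cuts the list to $\{2,3,5,7\}$ in one line, after which Corollary \ref{cor: order57} removes $5$, exactly as you do. Your replacement of the Weyl-group step by the tame Riemann--Hurwitz analysis of $C \to C/\langle g\rangle$ is sound: for $\ell$ an odd prime different from $p$, the stabilizers are trivial or all of $\langle g\rangle$, so $4 = \ell(2g'-2) + r(\ell-1)$ with $r$ the number of fixed points, and your case split on $g'$ correctly yields $\ell \in \{3,5,7\}$. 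What your approach buys is independence from the lattice-theoretic machinery and a statement that applies verbatim to any smooth curve of genus $3$ (for the reduction to $\{2,3,5,7\}$); what it costs is length, and it still needs Corollary \ref{cor: order57} --- which itself rests on lifting to characteristic $0$ --- to dispose of $\ell=5$, so it is not more self-contained at that step. One small stylistic remark: since the corollary assumes $G$ acts faithfully on the quartic $C$ itself, the detour through the decomposition $\Aut(X)\cong 2\times\Aut(C)$ to see that $g$ acts non-trivially on $C$ is unnecessary (though harmless). Your suggested alternative of lifting the tame cyclic action to characteristic zero and quoting the classical list is also valid and is essentially the mechanism already used in Corollary \ref{cor: order57}.
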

\begin{proof}
If $G$ is wild, then it contains $C_p$, so $p = 3$ by Lemma \ref{lem:wildcyclic}. Since $G$ embeds into $W(E_7)$ via $\rho$ and the only prime divisors of $|W(E_7)|$ are $2,3,5,$ and $7$ by Theorem \ref{thm:weylgroups2}, the same holds for $|G|$. By Corollary \ref{cor: order57}, the divisor $5$ does not occur, and the claim follows. 
\end{proof}

The restrictions $|G|$ obtained thus far allow us to show that the list of groups in Theorem \ref{bloom} that can leave a smooth plane quartic invariant is relatively short.

\begin{theorem} \label{thm: possiblegroups}
Assume $p \neq 2$. Let $G$ be a finite subgroup of $\PGL_3(\Bbbk)$ that leaves invariant a smooth plane quartic curve. Then, $G$ is conjugate to one of the following groups:
\begin{enumerate}
    \item Only if $p = 3$: The group $\PSU_3(9)$.
    \item Only if $p \neq 7$: The group ${\rm L}_2(7)$.
    \item A subgroup of $(\Bbbk^{\times})^2 : \frakS_3$, where $(\Bbbk^{\times})^2$ acts diagonally and $\frakS_3$ acts via permutations.
     \item Only if $p = 3$: A subgroup of $\Bbbk^2 : \GL_2(\Bbbk)$, where $\Bbbk^2$ acts as $x_0 \mapsto x_0 + \lambda x_1 + \mu x_2$ and $\GL_2(\Bbbk)$ acts linearly on $x_1$ and $x_2$. 
\end{enumerate}
\end{theorem}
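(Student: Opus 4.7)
The plan is to go through the classification of finite subgroups of $\PGL_3(\Bbbk)$ in Theorem \ref{bloom} case by case, using Corollary \ref{cor: possibleorders} (prime divisors of $|G|$ lie in $\{2,3,7\}$; wild elements force $p=3$) to either rule out each case or place it into one of the four target classes.

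The classical group families (items (1)--(5) of Theorem \ref{bloom}) involve $q = p^m$ and so are wild; by Lemma \ref{lem:wildcyclic} we may assume $p = 3$. A direct inspection of orders then eliminates nearly all of them: $|\PGL_3(3)|$ contains the factor $q^2 + q + 1 = 13$, whereas $|\PGU_3(9)| = 6048 = 2^5 \cdot 3^3 \cdot 7$ uses only admissible primes, yielding $\PSU_3(9)$ as case (1) of the target; larger $q$ introduces primes $5$ or $13$. Item (5), $\PGL_2(q)$ or $\mathrm{L}_2(q)$ for $q = p^m \neq 3$, is similarly ruled out: $q = 7$ requires $p = 7$ (wild, excluded), $q = 9$ gives order $360$ divisible by $5$, and larger $q$ fails analogously. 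The exceptional groups of items (6), (8), (9), (10) all contain an element of order $5$ and are excluded by Corollary \ref{cor: order57}, while item (7), $\mathrm{L}_2(7)$ of order $168 = 2^3 \cdot 3 \cdot 7$, survives and is case (2) of the target.

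The remaining items are identified geometrically. Item (12) places $G$ tautologically inside $T : \frakS_3$, where $T = (\Bbbk^\times)^2$ is the standard torus, giving case (3). For item (11), a tame cyclic normal subgroup $C$ of index $\leq 3$ is diagonalizable, and in the generic case $Z_{\PGL_3}(C) = T$, so $G \subseteq N_{\PGL_3}(T) = T : \frakS_3$. Item (13), which in our setting requires $p = 3$ (otherwise the elementary abelian $p$-subgroup is trivial and $G$ reduces to a case already handled), has a normal elementary abelian $3$-group $H$ conjugate to the group of transformations $x_0 \mapsto x_0 + \lambda x_1 + \mu x_2$, with quotient $\tilde G/H$ acting on $\mathrm{span}(x_1, x_2)$ as a subgroup of $\GL_2(\Bbbk)$, yielding case (4). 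Finally, items (14), (15) (the Hessian group $3^2 : \SL_2(3)$, the group $3^2 : Q_8$, and their subgroups containing $3^2$) are valid only for $p \neq 3$; since then all elements have order coprime to $p$, the action is tame, and Serre's theorem lifts both the quartic and the $G$-action to characteristic zero. The Hurwitz bound $|\Aut(C)| \leq 84(g-1) = 168$ for $g = 3$ rules out the full Hessian group of order $216$, while the admissible subgroups containing $3^2$ can be conjugated into $T : \frakS_3$, since the elementary abelian $3$-subgroup is semisimple (hence lies in a torus) and the admissible quotients embed into $\frakS_3$.

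The main obstacle I foresee is the degenerate subcase of item (11) in which the cyclic group $C$ has a repeated eigenvalue, so $Z_{\PGL_3}(C)$ is strictly larger than $T$ and $G$ need not a priori normalize a maximal torus; here one must use the preservation of a smooth quartic, in particular the fixed line and isolated fixed point of $C$, to gain enough rigidity to force $G$ into case (3) or case (4). A secondary technical point is the Serre-lifting argument in items (14), (15), which relies on the tameness ensured by $p \neq 3$ combined with the earlier exclusion of $p = 5, 7$, together with a classical identification of the admissible subgroups of the Hessian configuration as torus-normalizer subgroups.
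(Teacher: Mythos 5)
Your overall strategy---running through the cases of Theorem \ref{bloom} and using Corollary \ref{cor: possibleorders} to eliminate or classify each one---is exactly the paper's, and your handling of items (1)--(13) essentially matches it. The degenerate subcase of item (11) that you flag as your main obstacle is disposed of more cheaply in the paper: once the tame cyclic normal subgroup is diagonalized, the group is \emph{literally} of type (12), and the containment in $(\Bbbk^{\times})^2:\frakS_3$ is part of Bloom's description of type-(12) groups rather than the output of a centralizer or normalizer computation, so no separate rigidity argument is needed there.

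The genuine gap is in your treatment of items (14)--(15). First, the normal subgroup $3^2$ of the Hessian group is the image in $\PGL_3(\Bbbk)$ of the Heisenberg group of order $27$; its preimage in $\GL_3(\Bbbk)$ is non-abelian, so this $3^2$ is \emph{not} conjugate into a maximal torus: for non-cyclic abelian subgroups of $\PGL_3$, consisting of semisimple elements does not imply being toral. Second, the quotients of the relevant subgroups containing $3^2$ do not embed into $\frakS_3$: for instance $3^2:Q_8$ (order $72$) and $3^2:C_4$ survive your Hurwitz bound, which only removes the full group of order $216$, and $3^2:Q_8$ has no abelian normal subgroup of index at most $6$ (as $Q_8$ acts irreducibly and faithfully on $3^2$), so it is not isomorphic to any subgroup of $(\Bbbk^{\times})^2:\frakS_3$. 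Your argument therefore leaves these intermediate groups neither excluded nor placed in one of the four target classes; if any of them preserved a smooth quartic, the theorem as stated would be false. The paper instead shows that \emph{no} smooth quartic is invariant under $3^2$ at all, which is the step you are missing: for a diagonal $3^2$ one checks directly that every semi-invariant quartic form is divisible by a coordinate or is a monomial, hence singular, while for the Heisenberg $3^2$ the two generators shift each other's character spaces, so no quartic form is simultaneously semi-invariant under both. Some version of this explicit computation is unavoidable; the lifting-plus-Hurwitz route does not suffice.
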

\begin{proof}
In the following, we study the items in Theorem \ref{bloom} and exclude most of them using Corollary \ref{cor: possibleorders}. Recall from Theorem \ref{thm:weylgroups2} that
$$
|W(\sfE_7)| = 2^{10} \cdot 3^4 \cdot 5 \cdot 7$$
and that, by Corollary \ref{cor: possibleorders}, the only prime divisors of $|G|$ are $2$, $3$, and $7$.
\begin{enumerate}
    \item[(1)-(5)] All these groups are wild, hence $p = 3$ and $q$ is a power of $3$. Then, the only group whose order divides $|W(\sfE_7)|$ and which does not have $5$ as a prime factor is $\PSU_3(9)$. 
    \item[(6)-(10)] Here, all groups except ${\rm L}_2(7)$ have $5$ as a prime factor.
    \item[(11)-(12)] The group in (11) can be diagonalized over $\Bbbk$, so (11) becomes a special case of (12).
    \item[(13)] The explicit description we give is taken from \cite[Section 7]{Bloom}.
    \item[(14)-(15)] Over $\Bbbk$, (15) is a special case of (14). All these groups contain a subgroup of the form $3^2$. Since $p \neq 3$, we can diagonalize this subgroup. It is elementary to check that there are no smooth quartics that are invariant under a diagonal action of $3^2$.
\end{enumerate}
\end{proof}

In the following sections, we analyze these four types of groups in detail and check which of them are realized as automorphism groups of smooth plane quartics. We will focus on the wild groups first and treat the tame groups afterwards in Section \ref{sec: tamedeg2}.

\subsection{The groups $\PSU_3(9)$ and 
${\rm L}_2(7)$}

It is well-known that, over $\mathbb{C}$, the Klein quartic curve defined by the equation
\beq\label{klein}
x_0^3x_1 + x_1^3x_2 + x_2^3x_0 = 0.
\eeq
is the unique (up to isomorphism) smooth plane quartic curve with group of automorphisms of maximal 
possible order equal to $168$. Its group of automorphisms is isomorphic to the simple group ${\rm L}_2(7)$. Less known is that it is isomorphic to two members of the pencil of plane quartic curves
\beq
x^4+y^4+z^4+t(x^2y^2+y^2z^2+y^2z^2) = 0,
\eeq
corresponding to the parameters $t = \frac{3}{2}(-1\pm \sqrt{-7})$ (see \cite[6.5.2]{CAG}). 
The change of coordinates realizing this isomorphism is given by 
$$(x_0,x_1,x_2) \mapsto (x+ay+bz,ax+by+z,bx+y+az),$$
where $a= 1+\zeta_7+\zeta_7^2+\zeta_3+\zeta_7^5, b = \zeta_7^2+\zeta_7$ and 
$\zeta_7$ is a primitive $7$-th root of unity \cite[p.56]{Elkies}.

In our case, when $p = 3$, we can still use this transformation and obtain an isomorphism between the Klein curve
\eqref{klein} and the Fermat quartic given by the equation
\beq\label{fermat}
    x_0^4 + x_1^4 + x_2^4 = 0.
\eeq

Since automorphisms of order $7$ are tame, the proof of \cite[Lemma 6.5.1]{CAG} shows the following.

\begin{lemma} \label{lem:quarticc7}
Let $C$ be a smooth plane quartic with an automorphism of order $7$. Then, $C$ is isomorphic to the Klein quartic
$$
x_0^3x_1 + x_1^3x_2 + x_2^3x_0 = 0.
$$
\end{lemma}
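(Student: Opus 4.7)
My plan is to reduce to the characteristic-zero argument of \cite[Lemma 6.5.1]{CAG} by using tameness to justify the required diagonalization. The first observation is that Lemma \ref{lem:wildcyclic} forces $p \neq 7$ (otherwise the only wild prime that could occur would be $3$), so any order-$7$ automorphism $g \in \Aut(C)$ is tame and in particular diagonalizable as an element of $\PGL_3(\Bbbk) = \Aut(\bbP^2)$. After choosing coordinates in which $g$ is diagonal, I may write
\[
g(x_0, x_1, x_2) = (\zeta^{a_0} x_0,\; \zeta^{a_1} x_1,\; \zeta^{a_2} x_2)
\]
for a primitive $7$-th root of unity $\zeta \in \Bbbk$ and some triple $(a_0, a_1, a_2) \in (\bbZ/7\bbZ)^3$, well-defined up to a simultaneous shift.

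Next I would rule out the degenerate cases where two of the $a_i$ coincide modulo $7$. In any such case $g$ would fix a coordinate line pointwise; this line meets the smooth quartic $C$ in four points, all of which are then $g$-fixed, while $g$ acts with order $7$ on the pencil of lines through the remaining coordinate vertex. A short check (or an application of the Lefschetz formula \eqref{lefschetz} to an order-$7$ element of $W(\sfE_7)$, whose trace is determined by Carter's table) shows this fixed-locus pattern is incompatible with $C$ being smooth. Hence $a_0, a_1, a_2$ are pairwise distinct and nonzero in $\bbZ/7\bbZ$.

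The third and main step is a finite case analysis. Since $C = V(f_4)$ is $g$-invariant, the polynomial $f_4$ must be a $g$-semi-invariant, i.e., it lies in one of the seven weight spaces of the $g$-action on $H^0(\bbP^2, \calO(4))$. I would enumerate the orbits of admissible triples $(a_0, a_1, a_2)$ under the natural action of $\frakS_3 \times (\bbZ/7\bbZ)^\times$ (permuting coordinates and rescaling $\zeta$), and for each orbit list the monomials of degree $4$ in every weight space. The hard part will be checking smoothness orbit-by-orbit: in all but one case, either the relevant weight space consists of quartics automatically singular at a coordinate vertex (because every monomial appearing is divisible by some $x_i^2$), or it is too small to contain a quartic that avoids such a vertex. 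The surviving case is the triple $(1,2,4)$ (up to equivalence), whose distinguished weight space is spanned precisely by $x_0^3 x_1 + x_1^3 x_2 + x_2^3 x_0$, the Klein polynomial.

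Because every ingredient of this analysis is the representation theory of the cyclic group $C_7$ in characteristic coprime to $7$ together with elementary smoothness checks, the entire argument is insensitive to the characteristic and carries over from characteristic zero. The conclusion is that $C$ is projectively equivalent to the Klein quartic, and therefore isomorphic to it.
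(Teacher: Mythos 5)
Your proposal is correct and is essentially the paper's proof written out in full: the paper simply observes that an order-$7$ automorphism must be tame (so $p\neq 7$) and then invokes the characteristic-zero argument of \cite[Lemma 6.5.1]{CAG}, which is exactly the diagonalization and weight-space case analysis you describe. The only cosmetic difference is that your elimination of repeated eigenvalues via fixed loci and Lefschetz is more roundabout than necessary — in that case every semi-invariant quartic is visibly divisible by $x_i^2$ or is a binary form, hence singular — but the argument goes through either way.
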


\begin{corollary} \label{cor: FermatKlein}
Let $C$ be a smooth plane quartic. Then, the following are equivalent:
\begin{enumerate}
    \item $C_7 \subseteq \Aut(C)$.
    \item ${\rm L}_2(7) \subseteq \Aut(C)$.
    \item $C$ can be defined by the equation
    \begin{equation} \label{eqndp2PSU39}
    x_0^3x_1 + x_1^3x_2 + x_2^3x_0 = 0.
    \end{equation}
\end{enumerate}
\smallskip

If $p \neq 3$, then (1) -- (3) are also equivalent to the following:
\begin{enumerate}
    \item[(4)] ${\rm L}_2(7) = \Aut(C)$.
\end{enumerate}
\smallskip

If $p = 3$, then (1) -- (3) are also equivalent to the following:
\begin{enumerate}
    \item[(5)] $\PSU_3(9) \subseteq \Aut(C)$.
    \item[(6)] $\PSU_3(9) = \Aut(C)$.
    \item[(7)] $C$ can be defined by the equation
    $$
    x_0^4 + x_1^4 + x_2^4 = 0
    $$
\end{enumerate}
In particular, for all $p$, the curve $C$ satisfying these conditions is unique and it exists if and only if $p \neq 7$.
\end{corollary}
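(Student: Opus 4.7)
The plan is to establish the stated equivalences in three rounds, using Lemma \ref{lem:quarticc7}, the explicit Klein--Fermat change of variables recorded just before the statement, the identification of the Fermat quartic in characteristic $3$ with the Hermitian curve, and the subgroup classification of Theorem \ref{thm: possiblegroups}.

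The equivalence of (1), (2), and (3) would be obtained as follows. Implication (1)$\Rightarrow$(3) is exactly Lemma \ref{lem:quarticc7}, and (2)$\Rightarrow$(1) is trivial since $7$ divides $|{\rm L}_2(7)|$. For (3)$\Rightarrow$(1) I would exhibit the diagonal automorphism $(x_0,x_1,x_2)\mapsto(\zeta_7^4 x_0,\zeta_7^2 x_1,\zeta_7 x_2)$, which has order $7$ and preserves the Klein equation whenever $\zeta_7\in\Bbbk$, i.e.\ whenever $p\ne 7$; conversely, $p=7$ is excluded by Lemma \ref{lem:wildcyclic}, and a direct check shows the Klein equation is singular in characteristic $7$, which settles the existence statement. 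For (3)$\Rightarrow$(2) I would write down the classical generators of ${\rm L}_2(7)\subset \PGL_3(\Bbbk)$ acting on the Klein curve---the order-$7$ diagonal element above, the cyclic permutation $(x_0,x_1,x_2)\mapsto (x_1,x_2,x_0)$, and one further explicit involution---and verify that their defining relations for the abstract group ${\rm L}_2(7)$ hold over any field of characteristic $\ne 7$.

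For $p\ne 3$, Corollary \ref{cor: possibleorders}(2) guarantees that $\Aut(C)$ is tame, so by the Serre tame-lifting theorem recalled in the introduction the pair $(C,\Aut(C))$ lifts to characteristic $0$. The Klein quartic over $\mathbb{C}$ has automorphism group ${\rm L}_2(7)$, hence $|\Aut(C)|\le 168$ and we obtain $\Aut(C)={\rm L}_2(7)$, proving (4). For $p=3$, the change of variables stated before Lemma \ref{lem:quarticc7} gives (3)$\Leftrightarrow$(7); rewriting the Fermat equation $x_0^4+x_1^4+x_2^4$ as $x_0\cdot x_0^q+x_1\cdot x_1^q+x_2\cdot x_2^q$ with $q=3$ identifies it with a Hermitian form over $\mathbb{F}_9$, so it is preserved by $\PGU_3(9)=\PSU_3(9)$ (the equality because $\gcd(q+1,3)=1$), yielding (7)$\Rightarrow$(5). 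For (5)$\Rightarrow$(6) I would invoke Theorem \ref{thm: possiblegroups}: $\Aut(C)$ must be conjugate into one of its four classes, and the simple group $\PSU_3(9)$ of order $6048$ fits into neither class (2) (order only $168$), nor class (3) (solvable-type), nor class (4) (whose quotient by a normal $p$-subgroup embeds in $\GL_2(\Bbbk)$, while $\PSU_3(9)$ admits no faithful projective representation of dimension $\le 2$), leaving only class (1) and forcing $\Aut(C)=\PSU_3(9)$. The implications (6)$\Rightarrow$(5)$\Rightarrow$(1) are immediate.

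The main obstacle is the upper bound $|\Aut(C)|\le 6048$ in characteristic $3$: the relevant automorphisms are wild, so the Lefschetz fixed-point method is not directly available, and the argument must route through the classification of finite subgroups of $\PGL_3(\Bbbk)$ preserving a smooth plane quartic (Theorem \ref{thm: possiblegroups}). Every other step reduces to Lemma \ref{lem:quarticc7}, a direct computation on the explicit defining equations, or the tame-lifting mechanism.
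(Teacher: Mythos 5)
Your proposal is correct and follows the same skeleton as the paper (Lemma \ref{lem:quarticc7} for $(1)\Rightarrow(3)$, the Klein--Fermat change of coordinates for $(3)\Leftrightarrow(7)$, and Theorem \ref{thm: possiblegroups} as the source of the upper bounds), but several individual implications are handled by genuinely different means. For $(3)\Rightarrow(2)$ the paper simply observes that the implication holds over $\mathbb{C}$ and reduces modulo $p$, whereas you propose verifying the defining relations of ${\rm L}_2(7)$ on explicit generators; this is more computational but has the merit of not invoking the characteristic-zero classification. For $(2)\Rightarrow(4)$ when $p\neq 3$ you lift the tame pair $(C,\Aut(C))$ to characteristic $0$ and bound $|\Aut(C)|$ by $168$ there, while the paper reads the maximality of ${\rm L}_2(7)$ directly off Theorem \ref{thm: possiblegroups}; both are available given what precedes the corollary. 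The most substantive divergence is in $(7)\Rightarrow(5)$: you identify the Fermat quartic in characteristic $3$ with the Hermitian curve $\sum x_i^{q+1}=0$ for $q=3$ and thereby exhibit the $\PGU_3(9)=\PSU_3(9)$-action explicitly, whereas the paper only shows $\Aut(C)\supsetneq {\rm L}_2(7)$ (via the $4^2$ of diagonal automorphisms) and lets Theorem \ref{thm: possiblegroups} force $\Aut(C)=\PSU_3(9)$. Your route is more informative, since it produces the group rather than deducing its presence from the classification. Two small points to tighten: in ruling out case (4) of Theorem \ref{thm: possiblegroups} the clean argument is that $\PSU_3(9)$, being nonabelian simple, meets the normal subgroup $\Bbbk^2$ trivially and so would embed into $\GL_2(\Bbbk)$, which is impossible because its Sylow $3$-subgroup is the nonabelian group $\mathcal{H}_3(3)$ while every finite $3$-subgroup of $\GL_2(\Bbbk)$ in characteristic $3$ is elementary abelian (your phrase ``no faithful projective representation of dimension $\le 2$'' should be adjusted accordingly); and your explicit order-$7$ diagonal automorphism in $(3)\Rightarrow(1)$ is redundant given $(3)\Rightarrow(2)\Rightarrow(1)$, though harmless.
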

\begin{proof}
By Lemma \ref{lem:quarticc7}, we have $(1) \Rightarrow (3)$. The implication $(3) \Rightarrow (2)$ holds over $\mathbb{C}$. This yields the corresponding implication in positive characteristic via reduction modulo $p$. The implication $(2) \Rightarrow (1)$ is clear. Note that the equation in $(3)$ defines a smooth quartic if and only if $p \neq 7$.

If $p \neq 3$, then Theorem \ref{thm: possiblegroups} shows that ${\rm L}_2(7)$ is maximal among all subgroups of $\PGL_3(\Bbbk)$ preserving a smooth quartic, hence $(2) \Rightarrow (4)$. The converse is clear.

If $p = 3$, then, as explained above, we have $(3) \Rightarrow (7)$. The Fermat quartic admits a faithful action of $4^2$, hence $\Aut(C)$ is strictly larger than ${\rm L}_3(7)$. By Theorem \ref{thm: possiblegroups}, this implies $\PSU_3(9) = \Aut(C)$. Hence, $(7) \Rightarrow (6)$. The remaining implications $(6) \Rightarrow (5)$ and $(5) \Rightarrow (1)$ are clear.
\end{proof}

\begin{remark} \label{rem: Hessian}
    The Fermat quartic curve in characteristic $3$ can be characterized among smooth plane quartics by the property that it has infinitely many 
inflection points, or equivalently, its Hessian is identically zero \cite[Proposition 3.7]{Pardini}.
\end{remark}

\subsection{Wild subgroups of $(\Bbbk^{\times})^2 : \mathfrak{S}_3$}

In this section, we classify the wild subgroups of $(\Bbbk^{\times})^2 : \mathfrak{S}_3$ that leave invariant smooth plane quartics. So, we have to classify smooth quartics that are invariant under a cyclic permutation of coordinates in characteristic $p = 3$.

\begin{lemma} \label{lem:3C}
Let $C$ be a smooth plane quartic in characteristic $p = 3$. Then, $C$ is invariant under the automorphism
 $g: (x_0,x_1,x_2) \mapsto (x_1,x_2,x_0)$ if and only if $C$ is given by an equation of the form
\begin{eqnarray*}
    &a(x_0^4 + x_1^4 + x_2^4) + b(x_0^3x_1 + x_1^3x_2 + x_2^3x_0) + c(x_0x_1^3 + x_1x_2^3 + x_2x_0^3) \\
    +&d(x_0^2x_1^2 + x_1^2x_2^2 + x_2^2x_0^2) + 
    e(x_0x_1x_2^2 + x_1x_2x_0^2 + x_2x_0x_1^2) = 0
\end{eqnarray*}
    with $a,b,c,d,e \in \Bbbk$. The element $g$ belongs to the conjugacy class of Type $2A_2$ in $W(\sfE_6)$.
\end{lemma}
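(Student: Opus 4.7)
To prove the lemma, the plan is to handle the two assertions independently: first determine the space of $g$-invariant quartic forms, and then identify the Weyl-group conjugacy class of $\rho(g)$.

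For the first part, I would analyze the action of $g$ on the $15$-dimensional space $V = H^0(\mathbb{P}^2, \calO_{\mathbb{P}^2}(4))$. The key characteristic-$3$ observation is that $(g-1)^3 = g^3 - 1 = 0$, so $g$ is unipotent on $V$ with unique eigenvalue $1$. In particular, if $C = V(f)$ is $g$-stable then $g \cdot f = \lambda f$ forces $\lambda^3 = 1$, hence $\lambda = 1$, so $f$ lies in the genuine fixed subspace $V^g = \ker(g - 1)$. Now $g$ permutes the monomial basis of $V$, and since no degree-$4$ monomial in three variables has all three exponents equal, every orbit has size $3$, giving $15/3 = 5$ orbits --- precisely the shapes $(4,0,0)$, $(3,1,0)$, $(3,0,1)$, $(2,2,0)$ and $(2,1,1)$. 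A direct calculation on each orbit $\{m, g \cdot m, g^2 \cdot m\}$ shows that the restriction of $\ker(g-1)$ to its $\bfk$-span is one-dimensional and spanned by $m + g \cdot m + g^2 \cdot m$. Summing over the five orbits yields $V^g$ of dimension $5$, spanned by exactly the five orbit sums appearing in the statement; this simultaneously establishes both directions of the ``if and only if''.

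For the conjugacy class assertion, I would pin down $\rho(g)$ by computing the characteristic polynomial of $g^*$ acting on $K_X^{\perp}$, since this is the invariant distinguishing Carter's classes. Because $g$ is wild, the Lefschetz formula \eqref{lefschetz} is not directly applicable, so the approach is geometric: $g$ has a unique fixed point $[1,1,1]$ on $\mathbb{P}^2$ (its characteristic polynomial on $\bfk^3$ is $(\lambda-1)^3$ in characteristic $3$), and the seven points blown up to produce $X$ split into $g$-orbits whose lengths can be read off from a generic invariant equation and from the position of $[1,1,1]$ relative to $C$. Tracking the induced permutation of a geometric basis of $\Pic(X)$ then yields the eigenvalue multiplicities of $g^*$, and matching against Table \ref{tbl:carter} identifies the class as Carter's type $2A_2$.

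The main obstacle is this last step: without the Lefschetz formula, one has to exhibit a sufficiently explicit $g$-equivariant model of $X$ on which the action on the Picard lattice is transparent, which requires careful bookkeeping of the location of $[1,1,1]$ relative to $C$ and of the orbit structure of the remaining blown-up points. Once the characteristic polynomial is in hand, the conjugacy class $2A_2$ follows at once from Table \ref{tbl:carter}.
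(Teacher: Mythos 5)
Your treatment of the first assertion is correct and complete, and it is exactly the computation the paper dismisses as ``straightforward'': the observation that $\lambda^3=1$ forces $\lambda=1$ in characteristic $3$ (so semi-invariance is automatically invariance), together with the fact that the fifteen quartic monomials fall into five free $C_3$-orbits whose spans each carry the regular representation, with one-dimensional fixed space generated by the orbit sum. That part needs no changes.

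The second assertion is where your proposal has a genuine gap, and you have in effect flagged it yourself by calling the key step ``the main obstacle'' without resolving it. Your plan requires a $g$-equivariant birational morphism $X\to\mathbb{P}^2$ contracting seven disjoint $(-1)$-curves permuted by $g$, so that the action on a geometric basis of $\Pic(X)$ becomes a signed permutation whose characteristic polynomial you can read off. But the plane on which $g$ acts as $(x_0,x_1,x_2)\mapsto(x_1,x_2,x_0)$ is the target of the \emph{anticanonical double cover}, not the plane of the seven blown-up points, and the existence of an invariant set of seven disjoint $(-1)$-curves (together with its orbit decomposition) is precisely what you would need to prove; nothing in the proposal does this, and ``read off from a generic invariant equation and from the position of $[1,1,1]$ relative to $C$'' does not substitute for it. The paper avoids the full equivariant model entirely: since $C$ has $28$ bitangents and $28\equiv 1\pmod 3$, the order-$3$ automorphism fixes some bitangent, hence preserves the pair of $(-1)$-curves above it, hence (a group of order $3$ cannot swap two objects) preserves each of them; contracting one such curve exhibits $g$ as the lift of an automorphism $g'$ of a cubic surface blown up at a fixed point of $g'$. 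Since $g$, hence $g'$, has a single fixed point, the classification of order-$3$ automorphisms of cubic surfaces in \cite[Theorem 10.4]{DD} pins down the class of $g'$ as $2A_2$ in $W(\sfE_6)$. If you want to keep your lattice-theoretic route, you would need to actually produce the invariant $(-1)$-curve configuration and its orbit structure; the divisibility argument $28\equiv 1\pmod 3$ is the missing idea that makes the reduction to the cubic-surface classification work with no bookkeeping at all.
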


\begin{proof}
Computing the equation of $C$ is straightforward. To compute the conjugacy class of $g$, we can argue as follows:
Since $C$ has $28$ bitangents and $g$ has order $3$, the automorphism of the del Pezzo surface $X$ induced by 
$g$ preserves a $(-1)$-curve, hence $g$ is induced by an automorphism $g'$ of a cubic surface and $X$ is 
obtained by blowing up a fixed point of $g'$. As $g$ has only one fixed point on $\mathbb{P}^2$, 
the automorphism $g'$ also has only one fixed point. By \cite[Theorem 10.4]{DD} this implies that the 
conjugacy class of $g'$ is of Type $2A_2$.  
\end{proof}

It turns out that every smooth quartic which is invariant under a cyclic permutation of coordiantes is projectively equivalent to one that is invariant under arbitrary permutations of coordinates. We remark that this happens in characteristic $0$ as well (see \cite[Proof of Theorem 6.5.2]{CAG}).

\begin{lemma} \label{lem:S3}
Let $C$ be a smooth plane quartic in characteristic $p = 3$. Assume that $C$ is invariant under $g: (x_0,x_1,x_2) \mapsto (x_1,x_2,x_0)$. Then, $C$ is invariant under a subgroup of $\PGL_3(\Bbbk)$ which is conjugate to the group $\mathfrak{S}_3$ of permutation matrices. In particular,
 $C$ admits an equation of the form
\begin{eqnarray} \label{eqndp2S3}
    &a(x_0^4 + x_1^4 + x_2^4) + b(x_0^3x_1 + x_1^3x_2 + x_2^3x_0 +x_0x_1^3 + x_1x_2^3 + x_2x_0^3) \\
    +&c(x_0^2x_1^2 + x_1^2x_2^2 + x_2^2x_0^2) + 
    d(x_0x_1x_2^2 + x_1x_2x_0^2 + x_2x_0x_1^2) = 0 \nonumber
\end{eqnarray}
\end{lemma}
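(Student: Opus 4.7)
The plan is to find an explicit element $h_\beta$ in the centralizer $Z_{\PGL_3(\Bbbk)}(g)$ such that applying it as a coordinate change symmetrizes the equation produced by Lemma~\ref{lem:3C}. Writing $F = aP_a + bP_b + cP_c + dP_d + eP_e$ in the basis of $g$-invariant quartics from that lemma, an equation is $\frakS_3$-invariant precisely when $b=c$, so the goal is to choose $h_\beta$ forcing the transformed coefficients to satisfy $B'=C'$. Once that is done, $C$ will be stabilized by the conjugate $h_\beta \frakS_3 h_\beta^{-1}$ of the standard permutation subgroup, which is exactly the assertion of the lemma.

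The key preliminary observation is that the unique $g$-fixed point $p_0 = [1:1:1]$ automatically lies on every $g$-invariant quartic in characteristic $3$: each $P_*$ is a sum of three monomials each evaluating to $1$ at $p_0$, so $P_*(p_0) = 3 = 0$. A short computation of partial derivatives gives $\nabla P_*|_{p_0} = (1,1,1)$ for every basis invariant, and hence
$$\nabla F|_{p_0} = (a+b+c+d+e)\,(1,1,1).$$
Smoothness of $C$ at $p_0$ therefore forces $a+b+c+d+e \neq 0$. This non-vanishing will be the essential non-degeneracy input for the main step.

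Next I would introduce the one-parameter family $h_\beta := (1-\beta)I + \beta g$, $\beta \in \Bbbk$. Each $h_\beta$ commutes with $g$ by construction, and a direct expansion gives $\det h_\beta = (1-\beta)^3 + \beta^3 = 1$ in characteristic $3$, so $h_\beta \in \SL_3(\Bbbk)\cap Z_{\PGL_3(\Bbbk)}(g)$. Applying the substitution $x_i \mapsto (1-\beta)x_i + \beta x_{i+1}$ (indices mod $3$) to $F$ and re-expanding in the $g$-invariant basis yields new coefficients $(A',B',C',D',E')$. Using the characteristic-$3$ identities $(A+B)^3 = A^3+B^3$ and $(A+B)^4 = A^4+A^3B+AB^3+B^4$, one can expand each $h_\beta^* P_*$ explicitly, and tracking only the $P_b$- and $P_c$-coefficients should produce the clean identity
$$B' - C' \;=\; (b-c) + (\beta - \beta^3)\,(a+b+c+d+e).$$
The content of this identity is that each invariant $P_*$ contributes exactly its own coefficient to the $(\beta-\beta^3)$-part of $B'-C'$, while the $(\beta-\beta^3)$-independent part collapses to $b-c$.

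To finish, since $\Bbbk$ is algebraically closed, the map $\beta \mapsto \beta - \beta^3 = \beta(1-\beta)(1+\beta)$ is surjective onto $\Bbbk$; together with $a+b+c+d+e \neq 0$, this lets me choose $\beta$ so that $B'-C'=0$. The transformed equation is then $\frakS_3$-symmetric of the stated form, and $C$ is preserved by the conjugate $h_\beta \frakS_3 h_\beta^{-1} \subset \PGL_3(\Bbbk)$ of the standard permutation group. I expect the main obstacle to be the bookkeeping in the $h_\beta$-pullback: verifying, in particular, that the contributions from $P_d$ and $P_e$ to $B'-C'$ really come out as $d(\beta-\beta^3)$ and $e(\beta-\beta^3)$, so that the five contributions pool into the single factor $(a+b+c+d+e)$ that smoothness has just rendered nonzero.
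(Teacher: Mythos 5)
Your proposal is correct; I checked the key identity $B'-C' = (b-c) + (\beta-\beta^3)(a+b+c+d+e)$ term by term and it holds (e.g.\ $h_\beta^*P_a = (u^4+v^4)P_a + u^3v\,P_b + uv^3\,P_c$ with $u=1-\beta$, $v=\beta$, and $u^3v-uv^3 = \beta-\beta^3$; the $P_d$ and $P_e$ contributions are $2(uv^3-u^3v) = \beta-\beta^3$ and $u^2v-uv^2=\beta-\beta^3$ respectively), as does the smoothness computation at the unique fixed point $[1:1:1]$. The underlying strategy is the same as the paper's -- conjugate by an element of the centralizer $Z$ of $g$ to land in the symmetric locus $b=c$ -- but the execution differs in a substantive way. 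The paper argues by a dimension count: acting with $Z$ on the $b=c$ family sweeps out a \emph{general} member of the full $g$-invariant family, and then invokes ``holds for a general $C$, hence for every $C$,'' a degeneration step that tacitly requires specializing a conjugate of $\frakS_3$ along a family of smooth quartics. Your argument is constructive and uniform: you restrict to the explicit one-parameter subgroup $h_\beta = (1-\beta)I+\beta g$ of $Z$, compute exactly which members of the family can be symmetrized, and identify the sole obstruction $a+b+c+d+e=0$ as the condition that $C$ be singular at the $g$-fixed point. This buys a proof that applies directly to every smooth member with no genericity or specialization argument, and as a bonus the symmetrizing element commutes with $g$, so the resulting copy of $\frakS_3$ in $\PGL_3(\Bbbk)$ contains the original $g$. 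The only point worth making explicit in a write-up is that $h_\beta^*F$ is again a $\Bbbk$-linear combination of the five basis invariants because $h_\beta$ commutes with $g$ and these invariants span the space of $g$-invariant quartics by Lemma~\ref{lem:3C}, so reading off $B'$ and $C'$ from the monomials $x_0^3x_1$ and $x_0x_1^3$ is legitimate.
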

\begin{proof}
It suffices to prove that the statement holds for a general $C$ in the family described in Lemma \ref{lem:3C}. With notation as in that lemma, the quartics where $b = c$ are invariant under $\mathfrak{S}_3$. The centralizer $Z$ of $g$ in $\GL_3(\Bbbk)$ consists of matrices of the form
$$
\left(
\begin{matrix}
\alpha & \beta & \gamma \\
\gamma & \alpha & \beta \\
\beta & \gamma & \alpha
\end{matrix}
\right)
$$
with $\alpha + \beta + \gamma \neq 0$, and the centralizer of the full group $\mathfrak{S}_3$ is the subgroup where $\beta = \gamma$. So, if we conjugate the equations where $b = c$ with elements of $Z$ with $\beta \neq \gamma$, 
we obtain a general $C$ in the family. Hence, $\mathfrak{S}_3 \subseteq \Aut(C)$ holds for a general $C$, hence for 
every $C$, in the family of Lemma \ref{lem:3C}.
\end{proof}

Now, the Fermat quartic \eqref{fermat} is a member of the family of 
quartics that occur in Lemma \ref{lem:S3}. The following lemma gives us a criterion to detect it.

\begin{lemma} \label{lem: detectfermat}
Assume $p = 3$. A smooth plane quartic $C$ given by an equation as in Lemma \ref{lem:3C} 
satisfies $\Aut(C) = \PSU_3(9)$ if and only if $c = d = 0$ and $b\ne a$.
\end{lemma}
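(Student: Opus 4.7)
The strategy is to use the Hessian characterization of the Fermat quartic from Remark \ref{rem: Hessian}, combined with Corollary \ref{cor: FermatKlein}, which identifies $\Aut(C) = \PSU_3(9)$ with $C$ being isomorphic to the Fermat quartic, and hence with the identical vanishing of the Hessian matrix of a defining equation of $C$.

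First, I would compute the Hessian of the general $C_3$-invariant quartic $f = af_a + bf_b + cf_c + df_d + ef_e$ from Lemma \ref{lem:3C}, writing $f_a, \ldots, f_e$ for the five cyclic sums listed there. The key observation is that in characteristic $3$, every quartic monomial of the form $x_i^3 x_j$ has identically vanishing Hessian matrix, since a second derivative either yields a factor of $3 = 0$ or differentiates a linear factor twice. Consequently the terms $af_a$, $bf_b$, $cf_c$ contribute nothing, and the Hessian of $f$ equals $d\,H_{f_d} + e\,H_{f_e}$. A direct check shows that $H_{f_d}$ and $H_{f_e}$ are linearly independent matrices of quadratic forms (for instance, the $(0,0)$-entries involve $x_1^2 + x_2^2$ and $x_1 x_2$ respectively). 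One can then use the centralizer $Z$ of $g$ in $\GL_3(\Bbbk)$ described in the proof of Lemma \ref{lem:S3}, which in characteristic $3$ includes the unipotent elements corresponding to the substitutions $x_i \mapsto x_i + t(x_0 + x_1 + x_2)$, to bring the Hessian-vanishing locus into the normal form $c = d = 0$. Indeed, such a substitution adds a multiple of $(x_0 + x_1 + x_2)^4 = f_a + f_b + f_c$ (an identity in characteristic $3$) to $f$, simultaneously shifting $a, b, c$ by the same amount, and this one-parameter freedom is used to set $c = 0$.

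Finally, one checks that for an equation of the form $af_a + bf_b + ef_e = 0$ with $c = d = 0$, smoothness is equivalent to $b \ne a$. The singular locus is cut out by the three cyclic partial derivatives, which, using the substitution $y_i = x_i^3$ valid in characteristic $3$, reduces to a circulant linear system in $(y_0, y_1, y_2)$ whose determinant factors as a cube of a linear form in $(a, b)$; identifying this linear form with the condition in the statement completes the argument. The main anticipated obstacle is the bookkeeping in the centralizer normalization: since in characteristic $3$ the circulant matrices $\alpha I + \beta P + \gamma P^2$ are unipotent and not diagonalizable over $\Bbbk$, the action on the coefficient vector $(a, b, c, d, e)$ is governed by nilpotent shifts rather than a clean eigenspace decomposition.
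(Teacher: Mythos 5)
Your opening computation is correct and is exactly the argument the paper intends: in characteristic $3$ all second partials of $x_i^4$ and of $x_i^3x_j$ vanish, so the Hessian matrix of $f$ equals $d\,H_{f_d}+e\,H_{f_e}$, and since these two matrices of quadratic forms are linearly independent, the Hessian matrix vanishes identically precisely when the coefficients of $\sum x_i^2x_j^2$ and of $x_0x_1x_2(x_0+x_1+x_2)$ are both zero. The genuine gap is the ``centralizer normalization'' step that follows. First, the substitution $x_i\mapsto x_i+t(x_0+x_1+x_2)$ is a linear change of variables, not a translation in the space of quartics; it does \emph{not} replace $f$ by $f$ plus a multiple of $(x_0+x_1+x_2)^4$, so the claimed simultaneous shift of $(a,b,c)$ is unfounded. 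Second, and more fundamentally, the identical vanishing of the Hessian matrix is already coordinate--independent (one has $H_{F\circ B}=B^{T}H_F(B\,\cdot\,)B$), so the criterion your computation produces cannot be ``renormalized'' into a different condition on the same five coefficients: $f_a+f_c$ is the Hermitian curve $\sum_{i,j}A_{ij}x_i^3x_j$ with $A=I+P^2$ ($P$ the cyclic permutation matrix), which is smooth since $\det A=2\neq 0$ and is projectively equivalent to the Fermat quartic (all nondegenerate such forms are equivalent over an algebraically closed field, by Lang's theorem), even though its coefficient $c$ is nonzero; conversely $f_a+e\,f_e$ with generic $e\neq 0$ is smooth and has nonvanishing Hessian matrix, hence is \emph{not} the Fermat quartic, even though $c=d=0$ and $b\neq a$. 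Your closing smoothness claim also fails: for $af_a+bf_b+cf_c$ the circulant determinant is $(a+b+c)^3$, so nondegeneracy in the five-coefficient labelling is $a+b+c\neq 0$, not $a\neq b$.

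The resolution is that the conditions ``$c=d=0$ and $b\neq a$'' are written in the coefficients of the $\frakS_3$-symmetric normal form of Lemma \ref{lem:S3}, namely $a f_a+b(f_b+f_c)+c f_d+d f_e$; the reference to Lemma \ref{lem:3C} in the statement is a slip, as the use made of the lemma in the proof of Lemma \ref{lem:S4} confirms. Read this way, your Hessian computation \emph{directly} gives ``Hessian matrix $\equiv 0$ iff $c=d=0$'', and then $C$ is the Hermitian curve attached to $aI+b(P+P^2)$, whose determinant is $(a+2b)^3=(a-b)^3$; smoothness is therefore exactly $a\neq b$, and every nondegenerate Hermitian form is equivalent to the standard one, so $C$ is the Fermat quartic and Corollary \ref{cor: FermatKlein} applies. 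In short: keep your first paragraph, delete the centralizer step entirely, and replace the circulant computation by the determinant of $aI+b(P+P^2)$.
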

\begin{proof}
Computing the Hessian of this curve, we find that it is equal to zero if and only if 
$c=d = 0$. We conclude using Remark \ref{rem: Hessian}.
\end{proof}

\begin{lemma} \label{lem:S4}
Let $C$ be a smooth plane quartic in characteristic $p = 3$. Assume that $\Aut(C)$ is conjugate to a 
subgroup of $(\Bbbk^{\times})^2 : \mathfrak{S}_3$ and $\mathfrak{S}_3 \subsetneq \Aut(C)$. Then, $C$ can be defined by an equation of the form
\beq\label{eqndp2S4}
x_0^2x_1^2 + x_1^2x_2^2 + x_0^2x_2^2 + a(x_0^4 + x_1^4 + x_2^4) = 0
\eeq
with $a \in \Bbbk \setminus \mathbb{F}_3$.
Conversely, a quartic $C$ defined by an equation of this form satisfies $\Aut(C) \cong 2^2 : \mathfrak{S}_3 \cong \mathfrak{S}_4$.
\end{lemma}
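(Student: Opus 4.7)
\emph{Proof proposal.} The plan is to match monomials under a diagonal automorphism to constrain the equation, then carry out a smoothness analysis to pin down the admissible parameter, and finally treat the converse using Theorem \ref{thm: possiblegroups}.

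After a coordinate change I may assume that the $\mathfrak{S}_3 \subseteq \Aut(C)$ acts as the standard permutation $\mathfrak{S}_3$, so that Lemma \ref{lem:S3} supplies an $\mathfrak{S}_3$-invariant equation $F = 0$ for $C$ with coefficients $a, b, c, d$ as displayed there. The hypothesis $\mathfrak{S}_3 \subsetneq \Aut(C) \subseteq (\Bbbk^{\times})^2 : \mathfrak{S}_3$ provides a non-trivial element $h = \mathrm{diag}(\alpha, \beta, \gamma) \in \Aut(C) \cap (\Bbbk^{\times})^2$. Writing $h \cdot F = \lambda F$ and matching monomials, the six terms with coefficient $b$ force successively $\alpha^2 = \beta^2 = \gamma^2$ and then $\alpha = \beta = \gamma$, so $h$ would be trivial unless $b = 0$; the three terms with coefficient $d$ similarly force $\alpha = \beta = \gamma$, so $d = 0$; the three terms with coefficient $c$ force $\alpha^2\beta^2 = \beta^2\gamma^2 = \alpha^2\gamma^2$, hence $\alpha^2 = \beta^2 = \gamma^2$ provided $c \neq 0$. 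Moreover $c \neq 0$, since otherwise $C$ is the Fermat quartic and $\Aut(C) = \PSU_3(9)$ by Corollary \ref{cor: FermatKlein}, contradicting the hypothesis; and $a \neq 0$, since otherwise $C$ is singular at each of the three coordinate points. Rescaling $c$ to $1$ then yields the displayed equation.

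To determine which $a \in \Bbbk^\times$ give a smooth $C$, I compute the partial derivatives, which in characteristic $3$ simplify to $\partial_i F = x_i(a x_i^2 - x_j^2 - x_k^2)$ up to a unit. A case analysis on the vanishing coordinates at a hypothetical singularity shows that a singularity exists precisely when $a \in \mathbb{F}_3$: one finds $[1:0:0]$ (and its permutations) when $a = 0$, the six points $[0:1:\pm 1]$ together with their $\mathfrak{S}_3$-images when $a = 1$, and the entire conic $x_0^2 + x_1^2 + x_2^2 = 0$ when $a = -1$, where in fact $F = -(x_0^2 + x_1^2 + x_2^2)^2$. Hence $C$ is smooth if and only if $a \notin \mathbb{F}_3$, completing the forward direction. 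For the converse, assume $C$ is cut out by the displayed equation with $a \notin \mathbb{F}_3$; then $C$ is smooth and manifestly invariant under coordinate permutations together with the sign changes $(\epsilon_0, \epsilon_1, \epsilon_2)$ satisfying $\epsilon_0 \epsilon_1 \epsilon_2 = 1$, which together generate a subgroup isomorphic to $2^2 : \mathfrak{S}_3 \cong \mathfrak{S}_4$. To exclude a larger $\Aut(C)$ I invoke Theorem \ref{thm: possiblegroups}: options $(1)$ and $(2)$ would force $C$ to be the Fermat quartic by Corollary \ref{cor: FermatKlein}, which is ruled out by Lemma \ref{lem: detectfermat} and Remark \ref{rem: Hessian} because the coefficient of $x_0^2 x_1^2 + x_1^2 x_2^2 + x_0^2 x_2^2$ in $F$ is non-zero; option $(4)$ requires a fixed point on $\mathbb{P}^2$, which $\mathfrak{S}_4$ does not have (the three coordinate vertices form a single orbit). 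Hence $\Aut(C)$ is conjugate to a subgroup of $(\Bbbk^{\times})^2 : \mathfrak{S}_3$, and the coefficient analysis of the forward direction applied to any diagonal automorphism shows that its diagonal part has order at most $4$; combined with $\mathfrak{S}_4 \subseteq \Aut(C)$, this yields $\Aut(C) \cong \mathfrak{S}_4$.

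The hard part will be the smoothness case analysis: each of the three values in $\mathbb{F}_3$ must be examined separately, and the non-reduced degeneration at $a = -1$ is detected via the characteristic-$3$ identity $-2 = 1$. Everything else — the coefficient matching under $h$, the exclusion of the exceptional groups $\PSU_3(9)$ and ${\rm L}_2(7)$ via the Hessian, and the fixed-point obstruction against the flag stabilizer — is essentially formal.
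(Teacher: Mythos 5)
Your proof is correct and follows the same overall strategy as the paper's: reduce to the $\mathfrak{S}_3$-normal form of Lemma \ref{lem:S3}, match monomials under a non-trivial diagonal automorphism to force $b=d=0$ and $c\neq 0$, verify smoothness for $a\notin\mathbb{F}_3$, and invoke Theorem \ref{thm: possiblegroups} for the converse. Two sub-steps are handled differently, both acceptably. First, you kill $b$ and $d$ by showing that a single non-trivial diagonal element semi-fixing those monomials must be scalar, whereas the paper first pins down $H\cong 2^2$ via the $c$-terms and then uses the full $2^2$ to eliminate $b$; you also carry out the Jacobian case analysis explicitly (including the double-conic degeneration at $a=-1$) where the paper simply cites the Jacobian criterion. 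Second, and more substantively, to rule out $\Aut(C)$ being conjugate into $\Bbbk^2:\GL_2(\Bbbk)$ the paper argues that an order-$3$ element would then have a line of fixed points, while you observe that every subgroup of $\Bbbk^2:\GL_2(\Bbbk)$ fixes the point $[1,0,0]$, whereas $2^2:\mathfrak{S}_3$ has empty fixed locus on $\mathbb{P}^2$ (the common fixed points of $2^2$ are the three coordinate vertices, which $\mathfrak{S}_3$ permutes transitively — you should state this intermediate step rather than only the parenthetical about the orbit). Your fixed-point argument is arguably more robust, since it does not require analyzing the translation part of the order-$3$ element, which is what the paper's ``fixed locus is a line'' claim implicitly depends on. No gaps.
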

\begin{proof}
We assume that $\Aut(C) \cong H : \mathfrak{S}_3$, where $H$ is non-trivial and acts diagonally, and $\mathfrak{S}_3$ is the group of permutation matrices. Thus, we may assume 
that $C$ is given by an equation as in Lemma \ref{lem:S3}. The polynomial $x_0x_1x_2^2 + x_1x_2x_0^2 + x_2x_0x_1^2$ is not semi-invariant under any diagonal automorphism, hence $d = 0$. By Lemma \ref{lem: detectfermat}, we have $c \neq 0$ and we can assume $c = 1$ by rescaling the coordinates. A diagonal automorphism for which $x_0^2x_1^2 + x_1^2x_2^2 + x_2^2x_0^2$ is semi-invariant has order $2$ and since $H$ is non-trivial and 
$C_3 \subseteq \Aut(C)$, we deduce that $H \cong 2^2$. But then the polynomial $x_0^3x_1 + x_1^3x_2 + x_2^3x_0 + x_0x_1^3 + x_1x_2^3 + x_2x_0^3$ is not $H$-semi-invariant, hence $b = 0$. This shows that 
$C$ is given by the stated equation and $a \not \in \mathbb{F}_3$ follows from the Jacobian criterion.

It remains to show that $\Aut(C) = 2^2 : \mathfrak{S}_3 \cong \mathfrak{S}_4$. By Lemma \ref{lem: detectfermat}, Corollary \ref{cor: FermatKlein}, and Theorem \ref{thm: possiblegroups}, it suffices to show that $\Aut(C)$ is not conjugate to a subgroup of $\Bbbk^2 : \GL_2(\Bbbk)$. If it were, then it would have to act through $\GL_2(\Bbbk)$, since $\frakS_4$ does not admit any non-trivial normal subgroup of order a power of $3$. In particular, the fixed locus of an element of order $3$ in $\frakS_4$ would be a line and this is not the case in our situation. Hence, $\Aut(C) \cong \frakS_4$.
\end{proof}

\subsection{Wild subgroups of $\Bbbk^2 : \GL_2(\Bbbk)$}
We already saw one conjugacy class of wild automorphisms in Lemma \ref{lem:3C}. The other conjugacy class is studied in the following lemma.

\begin{lemma} \label{lem:3B}
Let $C$ be a smooth plane quartic in characteristic $p = 3$. Then, $C$ is invariant under the automorphism $g: (x_0,x_1,x_2) \mapsto (x_0 + x_1,x_1,x_2)$ if and only 
if $C$ is given by an equation of the form
\beq\label{eqndp2C3}
    f_1(x_1,x_2)(x_0^3 - x_0x_1^2) + f_4(x_1,x_2) = 0,
\eeq
with $f_i$ homogeneous of degree $i$. The element $g$ belongs to the conjugacy class 
of type $3A_2$. 
\end{lemma}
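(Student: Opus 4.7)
The plan is to first derive the form of the equation via invariant theory, and then identify the Carter class of $\rho(g)$ in $W(\sfE_7)$ by reducing to a cubic surface, following the strategy of Lemma \ref{lem:3C}.

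\textbf{Shape of the equation.} Arguing as in Lemma \ref{lem:wildcyclic}, by \cite[Section 10.3]{Wehlau} the invariant subring of $\Bbbk[x_0,x_1,x_2]$ under $g$ is $\Bbbk[N(x_0),x_1,x_2]$, where
\[
N(x_0) \ =\ x_0(x_0+x_1)(x_0+2x_1) \ =\ x_0^3 - x_0\, x_1^2
\]
in characteristic $3$. Any polynomial of the claimed form is manifestly $g$-invariant, giving the easy direction. Conversely, since every one-dimensional representation of the cyclic $p$-group $(g)$ in characteristic $p$ is trivial, the defining polynomial of a $g$-invariant quartic $C$ lies in this invariant ring after rescaling. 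Enumerating the monomials of weighted degree $4$ in the generators $N(x_0), x_1, x_2$ (of degrees $3, 1, 1$) produces exactly the form $f_1(x_1,x_2)\,N(x_0)+f_4(x_1,x_2)$, no term $N(x_0)^2$ being available in degree $4$.

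\textbf{Carter class.} By Carter's classification, an order-$3$ element of $W(\sfE_7)$ has type $A_2$, $2A_2$, or $3A_2$, so it suffices to isolate $3A_2$. As in the proof of Lemma \ref{lem:3C}, $g$ permutes the $28$ bitangents of $C$, and since $28\equiv 1\pmod 3$ at least one bitangent $\ell$ is $g$-invariant. Its preimage $f^{-1}(\ell)=D_1+D_2$ splits as a sum of two $(-1)$-curves exchanged by the Geiser involution $\gamma$. Because $g$ has odd order and commutes with $\gamma$, each $D_i$ is individually $g$-invariant, so contracting $D_1$ yields a cubic surface $X'$ equipped with an induced order-$3$ automorphism $\bar g$. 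The decomposition $\Pic(X)\cong\Pic(X')\oplus\mathbb{Z}[D_1]$ is $g$-equivariant with trivial action on $\mathbb{Z}[D_1]$.

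Unlike the cyclic-permutation case of Lemma \ref{lem:3C}, where $g$ has an isolated fixed point on $\mathbb{P}^2$, the present $g$ fixes the whole line $\{x_1=0\}$ pointwise. Hence the fixed locus of $g$ on $X$ is the $1$-dimensional curve $f^{-1}(\{x_1=0\})$, and its image descends to a $1$-dimensional fixed locus of $\bar g$ on $X'$. By the classification of order-$3$ wild automorphisms of cubic surfaces in \cite[Theorem 10.4]{DD}, this forces $\bar g$ to have Carter type $3A_2$ in $W(\sfE_6)$, with characteristic polynomial $(t^2+t+1)^3$ on $\sfE_6$. Combined with the trivial action on $\mathbb{Z}[D_1]$, $g$ then acts on $\sfE_7$ with characteristic polynomial $(t-1)(t^2+t+1)^3$, which is precisely the Carter type $3A_2$ in $W(\sfE_7)$, completing the proof.

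The main technical point is the identification of $\bar g$ as type $3A_2$ on the cubic surface via its positive-dimensional fixed locus and the cubic surface classification of \cite{DD}; once that is in hand, the $g$-equivariant splitting of $\Pic(X)$ delivers the conclusion on $X$ immediately.
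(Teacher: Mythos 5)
Your proposal is correct and follows essentially the same route as the paper: the equation is obtained from the invariant ring $\Bbbk[N(x_0),x_1,x_2]$ exactly as in Lemma \ref{lem:wildcyclic}, and the conjugacy class is determined by passing to a cubic surface via an invariant bitangent and invoking the fact that the induced order-$3$ automorphism fixes a curve pointwise, so that \cite[Theorem 10.4]{DD} forces type $3A_2$. The only cosmetic differences are that you locate an invariant bitangent by the congruence $28\equiv 1 \pmod 3$ where the paper exhibits $V(f_1)$ explicitly, and that you add an (implicit in the paper) characteristic-polynomial check on $\sfE_7$.
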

\begin{proof}
The equation of $C$ follows from the structure of the invariant ring of $g$ (see the proof of Lemma \ref{lem:wildcyclic}). The closed subscheme of fixed points of $g$
 is $V(x_1)$. Our automorphism leaves the bitangent line given by the equation $f_1(x_1,x_2) = 0$ invariant. Since $g$ is of order 
 $3$, it also leaves each irreducible component of the preimage of $V(f_1)$ in $X$ invariant.  Thus, blowing down one of 
 the components of this preimage, we obtain an automorphism of order $3$ of a cubic surface that fixes a plane section
 (the preimage of $V(x_1)$) pointwise. It follows from \cite[Theorem 10.4]{DD} that this automorphism belongs to the conjugacy class 
$3A_2$ in $W(\sfE_6)$. Thus, the corresponding automorphism of $X$ is in the conjugacy class $3A_2$ in $W(\sfE_7)$.
 \end{proof}

\begin{lemma} \label{lem: Heisenberg}
Let $G$ be a wild subgroup of $\Bbbk^2 : \GL_2(\Bbbk)$.
If $9 \mid |G|$ and $G$ preserves a smooth plane quartic $C$, then $C$ is given by an equation of the form
\beq\label{eqndp2U9}
x_2(x_0^3 - x_0x_2^2) + x_1^4 + cx_1^2x_2^2
\eeq
for some $c \in \Bbbk$. Any plane quartic curve $C$ with such an equation is smooth. If $c = 0$, then $\Aut(C) \cong \PSU_3(9)$, and if $c \neq 0$, 
then $\Aut(C) \cong \mathcal{H}_3(3) : 2$.
\end{lemma}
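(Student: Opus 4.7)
The plan is to first observe that $G$ contains a subgroup isomorphic to $3^2$: by Lemma~\ref{lem:wildcyclic} we have $p=3$, and since del Pezzo surfaces of degree $2$ admit no wild automorphism of order $p^2$, every Sylow $3$-subgroup of $G$ has exponent $3$; since $9\mid|G|$, it contains $3^2$. Up to conjugation this $3^2$ sits inside the Sylow $3$-subgroup $U_0=\Bbbk^2\rtimes U$ of $\Bbbk^2:\GL_2(\Bbbk)$, where $U$ is the upper-triangular unipotent subgroup of $\GL_2$. The group $U_0$ is a three-dimensional Heisenberg-like unipotent group with one-dimensional centre $Z=\{(0,b,0)\}$ acting via $x_0\mapsto x_0+bx_2$. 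I would first rule out $3^2\cap Z=0$ by computing that two independent translations in $\Bbbk^2\subset U_0$ have their common degree-$4$ invariants vanishing to high order at $[1:0:0]$, preventing smoothness. Hence $3^2\cap Z=C_3$; pick a generator $h$ of this intersection and a complementary $g\in 3^2$. Commutation with $h\in Z$ and rescaling coordinates reduce the pair to the normal form $h:x_0\mapsto x_0+x_2$ and $g:x_0\mapsto x_0+a_g x_1,\ x_1\mapsto x_1+x_2,\ x_2\mapsto x_2$ with a single remaining parameter $a_g\in\Bbbk$.

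Next I would determine the $\langle g,h\rangle$-invariants of degree $4$. The ring of $h$-invariants is $\Bbbk[N(x_0),x_1,x_2]$ with $N(x_0)=x_0^3-x_0 x_2^2=x_0(x_0-x_2)(x_0+x_2)$, whose degree-$4$ piece is seven-dimensional. Imposing $g$-invariance via the Frobenius identity $(x_0+a_gx_1)^3=x_0^3+a_g^3 x_1^3$ yields linear constraints on the coefficients; in particular the coefficient of $x_1^4$ equals $-\beta a_g^3$, where $\beta$ is the coefficient of $x_2N(x_0)$, so irreducibility forces $a_g\neq 0$. Using further coordinate changes that preserve $\langle g,h\rangle$---a central translation $x_0\mapsto x_0+\nu x_2$ killing the $x_2^4$-coefficient by Artin--Schreier, a unipotent element of $\GL_2$ acting on $(x_1,x_2)$ killing the $x_1^3 x_2$-coefficient, another translation $x_0\mapsto x_0+\mu x_1$ killing the $x_1 x_2^3$-coefficient, and an overall rescaling normalizing the leading term---the equation reduces to $x_2 N(x_0)+x_1^4+cx_1^2x_2^2=0$. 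Smoothness for every $c\in\Bbbk$ is immediate from the Jacobian criterion: in characteristic $3$ the partial derivatives $-x_2^3$, $x_1^3-cx_1x_2^2$ and $x_0^3-cx_1^2x_2$ vanish simultaneously only at $[0:0:0]\notin\bbP^2$.

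Finally, a direct computation shows that the Hessian matrix of $P_c$ is identically zero if and only if $c=0$. By Remark~\ref{rem: Hessian} and Corollary~\ref{cor: FermatKlein}, the case $c=0$ thus gives $C\cong$ Fermat quartic and $\Aut(C)\cong\PSU_3(9)$. For $c\neq 0$, $C$ is not Fermat, so Theorem~\ref{thm: possiblegroups} leaves only subgroups of $\Bbbk^2:\GL_2(\Bbbk)$ as possibilities (${\rm L}_2(7)$ contains no $3^2$, and subgroups of $(\Bbbk^\times)^2:\frakS_3$ contain no $3^2$ either). I would then exhibit $\mathcal{H}_3(3):2\subseteq\Aut(C)$ by displaying $27$ explicit elements of order dividing $3$---the three central translations $x_0\mapsto x_0+\nu x_2$ with $\nu\in\bbF_3$, together with $24$ non-central automorphisms $g_{\alpha,\beta}$ indexed by the eight roots $\alpha$ of $\alpha^8-c\alpha^2+1=0$ and the three Artin--Schreier solutions $\beta$ of $\beta^3-\beta=-\alpha^{12}-c\alpha^6$---plus the normalizing involution $\sigma:x_1\mapsto -x_1$. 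The reverse inclusion follows from analyzing the possible overgroups of $\mathcal{H}_3(3):2$ inside $\Bbbk^2:\GL_2(\Bbbk)$ that preserve $P_c$. The main obstacle will be the explicit coordinate reduction in the second paragraph, which requires juggling several Artin--Schreier normalizations simultaneously in characteristic $3$, and to a lesser extent the upper bound for $\Aut(C)$ when $c\neq 0$.
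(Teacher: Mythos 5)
Your overall strategy tracks the paper's: produce an elementary abelian $3^2$, normalize its generators inside the unipotent radical, compute the joint degree-$4$ invariants to reach the normal form \eqref{eqndp2U9}, exhibit the $27+27$ elements of $\mathcal{H}_3(3):2$ together with $x_1\mapsto -x_1$, and then bound $\Aut(C)$ from above; your Jacobian check and your explicit lifts agree with the paper's. The genuine gap is in the normalization step. You reduce to ``$3^2\cap Z=C_3$'' by ruling out only the configuration in which both generators are translations in $\Bbbk^2\subset U_0$, but that is not the only way $3^2\cap Z=0$ can occur. Two elements of the Heisenberg group $U_0$ commute if and only if their images in $U_0/Z\cong\Bbbk^2$ are $\Bbbk$-proportional, so a complement to $Z$ of type $3^2$ can project onto an $\bbF_3$-plane inside \emph{any} $\Bbbk$-line of $U_0/Z$: for instance $\langle\, x_1\mapsto x_1+x_2,\ (x_0,x_1)\mapsto(x_0+\mu x_2,\,x_1+tx_2)\,\rangle$ with $t\notin\bbF_3$, or a regular unipotent together with a suitable ``$t$-multiple'' of it, both meet $Z$ trivially and are not translation groups. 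Likewise, once $h\in Z$ is fixed, your normal form for the complementary generator $g$ tacitly assumes $g$ acts nontrivially on $x_1$ (you scale the $x_1\mapsto x_1+cx_2$ part to $c=1$); the case where $g$ is a non-central translation needs at least a sentence (it is killed by your first computation). All the omitted configurations do fail --- each forces the invariant quartic to be a cone with vertex $[1,0,0]$ or to be divisible by $x_2$ --- but they require the same kind of invariant computation you carried out for pure translations, and as written they are simply absent.

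The paper sidesteps this case analysis entirely by a detour worth adopting: since $K\cong 3^2$ preserves a bitangent of $C$, it acts faithfully on a cubic surface, where \cite{DD} forces $K$ into the finite group $\mathcal{H}_3(3)$ of order $27$; any subgroup of order $9$ there automatically contains the center, and \cite{DD} pins down the conjugacy classes, so one generator is the central transvection $x_0\mapsto x_0+x_2$ (class $3A_2$) and a complementary one is a regular unipotent (class $2A_2$). One further soft spot: for the upper bound when $c\neq 0$ you only gesture at ``analyzing overgroups inside $\Bbbk^2:\GL_2(\Bbbk)$,'' but Theorem \ref{thm: possiblegroups} only says $\Aut(C)$ is conjugate to a subgroup of \emph{some} copy of $\Bbbk^2:\GL_2(\Bbbk)$, not necessarily the one containing your $\mathcal{H}_3(3):2$. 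The paper first shows, via the conjugacy of $\mathcal{H}_3(3):2$ into a maximal $\mathcal{H}_3(3):8\leq\PSU_3(9)$ inside $\Sp_6(2)$, that $\mathcal{H}_3(3)$ is normal in $\Aut(C)$, hence $\Aut(C)$ fixes $[1,0,0]$ and acts on $V(x_2)$, and then excludes a square root of $\tau$ using the roots of $\lambda^9-c\lambda^3+\lambda$; you will need some version of that argument to close the $c\neq 0$ case.
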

\begin{proof}
Since $G$ contains no elements of order $9$ \cite[Theorem 8]{Dolgachev1}, it must contain two commuting elements $g_1,g_2$ of order $3$ 
that generate a subgroup $K$ of order $9$. 
Since $K$ preserves a bitangent, it acts faithfully on a cubic surface, hence, by \cite{DD}, 
it embeds into the Heisenberg group $\mathcal{H}_3(3)$. Since $K$ is abelian of order $9$ and 
$\mathcal{H}_3(3)$ is a non-split extension of $3^2$ by its center, the group $K$ contains 
the center of $\mathcal{H}_3(3)$, hence it contains elements of conjugacy class $3A_2$ in $W(\sfE_6)$. 
By \cite{DD}, there are at most two elements of 
conjugacy class $3A_2$ in an automorphism group of a cubic surface, so the other elements of $K$ are of 
conjugacy class $2A_2$. 
Back on $C$, this means that $K$ contains automorphisms of both conjugacy classes $2A_2$ and $3A_2$.

Without loss of generality, we may assume that the $g_i$ are upper triangular matrices, 
that $g_1$ is of class $2A_2$, and that $g_2$ is of class $3A_2$. Up to conjugation by an upper triangular matrix, 
we may assume that $g_1$ acts as $x_0 \mapsto x_0 + x_1$, as $x_1 \mapsto x_1 + x_2$, or as $x_0 \mapsto x_0 + x_2$. In the first two cases, the centralizer of $g_1$ in the group of 
strict upper triangular matrices consists of automorphisms of class $3A_2$, 
hence we are in the third case. Analogously to Lemma \ref{lem:3B}, we deduce 
that $C$ is given by an equation of the form
$$
f_1(x_1,x_2)(x_0^3 - x_0x_2^2) + f_4(x_1,x_2) = 0.
$$
After conjugating by a suitable upper triangular matrix, we may assume that the automorphism 
$g_2$ acts as $(x_0,x_1,x_2) \mapsto (x_0 + x_1,x_1 + \mu x_2,x_2)$ for some $\mu \neq 0$. 
Since $C$ is invariant under $g_2$, it is given by an equation of the form
$$
x_2(x_0^3 - x_0x_2^2) + (a x_1^4 + bx_1^3x_2 + c x_1^2x_2^2 + dx_1x_2^3 + e x_2^4) = 0
$$
with $a = - \frac{1}{\mu}$, $\mu^2 + c \mu + 1 = 0$, and $\mu^4a + \mu^3b + \mu^2c +\mu d = 0$. 

Now, we rescale $x_1$ so that $d = -b$, use a substitution of the form $x_0 \mapsto x_0 + \alpha x_1 + \beta x_2$ to set $b = d = e = 0$, and then scale $a$ to $1$. These substitutions commute with $g_1$, 
so $C$ is still invariant under $g_1$, but they change the shape of $g_2$. However, the equation is now in the simpler form
$$
x_2(x_0^3 - x_0x_2^2) + x_1^4 + cx_1^2x_2^2 = 0.
$$
Now, this equation is invariant under the substitutions 
$(x_0,x_1,x_2) \mapsto (x_0 + \lambda x_1 + \mu x_2, x_1 - \lambda^3 x_2,x_2)$ 
with $\lambda^9 - \lambda^3c + \lambda = 0$ and $\mu^3 - \mu + \lambda^{12} +c\lambda^6 = 0$.
These substitutions form a group isomorphic to $\mathcal{H}_3(3)$.
We also see that $C$ is invariant under the involution 
$\tau:x_1 \mapsto -x_1$, hence $\mathcal{H}_3(3) : 2 \subseteq \Aut(C)$.

It is known that a subgroup of $\Sp_6(2) = W(\sfE_7)/\langle \pm {\rm id} \rangle$ isomorphic to $\mathcal{H}_3(3):2$ is conjugate to a subgroup 
of $\mathcal{H}_3(3) : 8$ which is realized as a maximal subgroup of $\PSU_3(9)$ \cite[p.14]{ATLAS}. So, if $\Aut(C) \neq \PSU_3(9)$, then $\mathcal{H}_3(3)$ is normal in $\Aut(C)$ and $\Aut(C)$ contains an element $\tau'$ whose square is $\tau$. In particular, $\tau'$ acts on the fixed locus $V(x_2)$ of $g_1$ and this action is non-trivial, since $\tau$ acts non-trivially on $V(x_2)$. Since $\Aut(C)$ fixes the point $V(x_2) \cap C = [1,0,0]$, Theorem \ref{suz} shows that $\tau'$ generates a subgroup of $\Bbbk^{\times}$ of order $4$ which normalizes $\mathcal{H}_3(3)/\langle g_1 \rangle \cong 3^2$. Since $3^2$ acts on $V(x_2)$ as $x_0 \mapsto x_0 + \lambda x_1$ with $\lambda^9 - \lambda^3 c + \lambda$, the automorphism $\tau'$ cannot exist if $c \neq 0$. Hence, $\Aut(C) \cong \mathcal{H}_3(3) : 2$.
\end{proof}

Hence, in the following, we may assume that $|H| = 3$. There are three conjugacy classes of elements of order $3$ in $\Bbbk^2 : \GL_2(\Bbbk)$, namely
\begin{eqnarray*}
g_1: & (x_0,x_1,x_2) &\mapsto (x_0 + x_1,x_1,x_2), \\
g_2: & (x_0,x_1,x_2) &\mapsto (x_0,x_1 + x_2,x_2), \text{ and} \\
g_3: & (x_0,x_1,x_2) &\mapsto (x_0+x_1,x_1+x_2,x_2).
\end{eqnarray*}
In each case, the group $G$ is contained in the normalizer $N_i$ of $g_i$ in $\Bbbk^2 : \GL_2(\Bbbk)$. We have
\begin{eqnarray*}
N_1 &=& 
\left\{
\left( 
\begin{matrix}
1 & \alpha & \beta \\
0 & \gamma & 0 \\
0 & \delta & \varepsilon
\end{matrix}
\right) \mid \alpha,\beta,\delta \in \Bbbk, \varepsilon \in \Bbbk^{\times}, \gamma^2 = 1
\right\}, \\
N_2 &= &
\left\{
\left( 
\begin{matrix}
1 & 0 & \alpha \\
0 & \beta & \gamma \\
0 & 0 & \delta
\end{matrix}
\right) \mid \alpha,\gamma \in \Bbbk, \delta \in \Bbbk^{\times}, \beta^2 = 1
\right\}, \text{ and} \\
N_3 &= &
\left\{
\left( 
\begin{matrix}
1 & \alpha & \beta \\
0 & \gamma & \alpha \gamma + \gamma - 1 \\
0 & 0 & 1
\end{matrix}
\right) \mid \alpha,\beta \in \Bbbk, \gamma^2 = 1
\right\}. \\
\end{eqnarray*}
In each case, the centralizer of $g_i$ has index $2$ and is obtained by setting $\gamma = 1, \beta = 1,$ or $\gamma = 1$, respectively.
The element $g_2$ is conjugate in $\PGL_3(\Bbbk)$ to $g_1$ via a cyclic permutation of coordinates and this conjugation maps $N_2$ into $N_1$, so we only have to study the cases where $G$ contains $g_1$ or $g_3$.

If $G$ leaves a smooth quartic invariant, then $|G| = 2^n \cdot 3$ for some $n \geq 0$. We settled the cases where $n = 0$ in Lemma \ref{lem:3B} and Lemma \ref{lem:3C}. If $n \geq 1$, then $G$ contains an involution. Next, we will study involutions in $N_1$ and $N_3$.

Assume that $G$ contains $g_1$ and leaves invariant a smooth quartic $C$. 
Every involution in $N_1$ is conjugate to one where $\alpha = \beta = 0$. Then, it lies in the group of lower triangular matrices in $\GL_2(\Bbbk)$, and thus, after conjugating by a lower triangular matrix, we may assume that it acts diagonally. Thus, we have the three representatives
\begin{eqnarray*}
\tau_1: & (x_0,x_1,x_2) &\mapsto (x_0,-x_1,x_2), \\
\tau_2: & (x_0,x_1,x_2) &\mapsto (x_0,x_1,-x_2), \text{ and} \\
\tau_3: & (x_0,x_1,x_2) &\mapsto (x_0,-x_1,-x_2).
\end{eqnarray*}
In $N_1$, the elements $\tau_1,\tau_2,\tau_3$ represent the conjugacy class of involutions with parameters $(\gamma,\varepsilon) = (-1,1), (1,-1)$, and $(-1,-1)$, respectively.

\begin{lemma} \label{lem: g1}
Let $C$ be a smooth plane quartic in characteristic $p = 3$. Assume that $C$ is invariant under $G \subseteq \Bbbk : \GL_2(\Bbbk)$, $G$ contains $g_1$ and $9 \nmid |G|$. If $\Aut(C)$ contains an involution, then $C$ is one of the quartics of Lemma \ref{lem: Heisenberg}.
\end{lemma}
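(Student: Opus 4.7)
The plan is to combine the form of $C$ given by Lemma \ref{lem:3B} --- namely $f_1(x_1,x_2)(x_0^3 - x_0 x_1^2) + f_4(x_1,x_2) = 0$ --- with the constraints imposed by an involution $\tau \in \Aut(C)$. A direct matrix computation identifies $N_1$ with the full $\PGL_3(\Bbbk)$-normalizer of $\langle g_1 \rangle$: the centralizer of $g_1$ in $\PGL_3(\Bbbk)$ coincides with the subgroup of $N_1$ where $\gamma = 1$, and $\tau_1$ realizes the conjugation $g_1 \mapsto g_1^{-1}$. Accordingly, the analysis splits according to whether $\tau \in N_1$. If not, then $\tau g_1 \tau^{-1}$ is a second wild order-$3$ unipotent in $\Aut(C)$ distinct from $g_1^{\pm 1}$; Theorem \ref{thm: possiblegroups} together with the unipotence of $g_1$ (which excludes subgroups of $(\Bbbk^\times)^2 : \frakS_3$, whose order-$3$ elements are semisimple) forces $\Aut(C)$ to be either $\PSU_3(9)$, in which case $C$ is the Fermat quartic by Corollary \ref{cor: FermatKlein} and hence covered by Lemma \ref{lem: Heisenberg}, or a subgroup of $\Bbbk^2 : \GL_2(\Bbbk)$ containing two distinct Sylow $3$-subgroups. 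In the latter case, the fact that the translation subgroup $\Bbbk^2$ of $\Bbbk^2 : \GL_2$ is entirely $3$-torsion in characteristic $3$ allows me to extract a subgroup $C_3 \times C_3 \subseteq \Aut(C)$ from $g_1$ and $\tau g_1 \tau^{-1}$ (or suitable products thereof), so Lemma \ref{lem: Heisenberg} applies.

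Assume now $\tau \in N_1$. After conjugation within $N_1$, $\tau$ is one of the diagonal involutions $\tau_1, \tau_2, \tau_3$. For $\tau_3$: the cubic $x_0^3 - x_0 x_1^2$ is invariant while $f_1$ picks up a sign, so projective invariance forces $f_1 = 0$ or $f_4 = 0$; both give reducible $C$, contradicting smoothness. For $\tau_1$: invariance gives $f_1 = B x_2$ and $f_4 = a x_1^4 + b x_1^2 x_2^2 + c x_2^4$, and a direct smoothness analysis of the partial derivatives in characteristic $3$ locates a singular point on the line $\{x_1 = 0\}$ for every choice of parameters (at $[(-c/B)^{1/3} : 0 : 1]$ if $c \neq 0$, and at $[0:0:1]$ if $c = 0$). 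For $\tau_2$: invariance gives $f_1 = A x_1$ and $f_4 = a x_1^4 + b x_1^2 x_2^2 + e x_2^4$, and smoothness requires $e \neq 0$. Swapping $x_1 \leftrightarrow x_2$ turns the cubic into $x_0^3 - x_0 x_2^2$ and the multiplier into a nonzero multiple of $x_2$; the Artin--Schreier substitution $x_0 \mapsto x_0 + \alpha x_2$ --- which commutes with the images of $g_1$ and $\tau_2$ and shifts the coefficient of $x_2^4$ by $\alpha^3 - \alpha$ --- kills that coefficient by surjectivity of the map $\alpha \mapsto \alpha^3 - \alpha$ on the algebraically closed field $\Bbbk$. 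A final rescaling brings the equation into the form of Lemma \ref{lem: Heisenberg}.

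The main obstacle lies in the first case, where one must analyze pairs of non-conjugate wild unipotents in $\PGL_3(\Bbbk)$ and extract from them a commuting pair in order to invoke Lemma \ref{lem: Heisenberg}. Once the reduction $\tau \in N_1$ is in place, the case-by-case analysis is essentially routine invariant theory together with smoothness checks over the Artin--Schreier covering $x_0 \mapsto x_0^3 - x_0$.
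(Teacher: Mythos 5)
Your treatment of the case $\tau\in N_1$ is exactly the paper's proof: reduce the involution to one of the diagonal representatives $\tau_1,\tau_2,\tau_3$, kill $\tau_1$ by exhibiting the singular point $[\alpha,0,1]$ with $B\alpha^3+c=0$, kill $\tau_3$ by reducibility, and bring the $\tau_2$-invariant curves to the normal form of Lemma \ref{lem: Heisenberg} via the swap $x_1\leftrightarrow x_2$ and the Artin--Schreier shift $\alpha\mapsto\alpha^3-\alpha$ (the paper performs this substitution before the swap rather than after, which is immaterial). This part, including the observation that irreducibility forces the relevant quartic coefficient to be nonzero so that the final rescaling works, is correct and complete.

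Where you diverge from the paper is the preliminary case $\tau\notin N_1$. The paper excludes it beforehand by asserting $G\subseteq N_1$; the clean justification, which the paper leaves implicit, is that $g_1$ lies in the normal abelian subgroup $\Bbbk^2$ of $\Bbbk^2:\GL_2(\Bbbk)$, so $G\cap\Bbbk^2$ is a normal $3$-subgroup of $G$ containing $g_1$, and $9\nmid|G|$ forces $G\cap\Bbbk^2=\langle g_1\rangle$, whence $G\subseteq N_1$. Your alternative route through Theorem \ref{thm: possiblegroups} reaches the right endpoint but rests on one false statement: in characteristic $3$ the order-$3$ elements of $(\Bbbk^{\times})^2:\frakS_3$ are \emph{not} semisimple --- no nontrivial element of order $p$ is --- they are the $3$-cycles, which are regular unipotent since $t^3-1=(t-1)^3$. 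That case is still excluded, but only because $g_1$ has Jordan type $(2,1)$ while those elements have a single Jordan block; semisimplicity is the wrong reason. A second soft spot: your extraction of $3^2$ from $g_1$ and $\tau g_1\tau^{-1}$ needs both to lie in the abelian normal subgroup $\Bbbk^2$, which holds for the given $g_1$ and any $\tau$ already inside $\Bbbk^2:\GL_2(\Bbbk)$, but after the conjugation supplied by Theorem \ref{thm: possiblegroups} the image of $g_1$ could a priori land in the linear part (a $g_2$-type element), and then commutativity is not free. Both issues disappear if you use the normality argument above, which is what the paper's framework intends; your computation that $N_1$ is the full $\PGL_3(\Bbbk)$-normalizer of $\langle g_1\rangle$ is correct and is a nice supplement either way.
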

\begin{proof}
In Lemma \ref{lem:3B}, we have determined the $g_1$-invariant quartics. Determining the equations for the quartics which are additionally $\tau_i$-invariant is straightforward. Now, it remains to show that every $\tau_i$-invariant $C$ occurs in the family of Lemma \ref{lem: Heisenberg}.

Assume $G$ contains $\tau_1$. Then, either the equation of $C$ is $\tau_1$-anti-invariant and then $C$ contains the line $V(x_1)$, or $C$ is given by an equation of the form
 $$
    ax_2(x_0^3 - x_0x_1^2) + bx_1^4 + cx_1^2x_2^2 + dx_2^4 = 0.
    $$
    This curve is singular at the point $[\alpha,0,1]$ with $a\alpha^3 + d = 0$.

Assume $G$ contains $\tau_2$. Then, either $C$ is reducible or given by an equation of the form
$$
    ax_1(x_0^3 - x_0x_1^2) + bx_1^4 + cx_1^2x_2^2 + dx_2^4 = 0.
    $$
Rescaling coordinates, we may assume $a = 1$ and then we can use a substitution of the form $x_0 \mapsto x_0 + \alpha x_1$ to set $b = 0$. After rescaling $x_2$ and swapping $x_1$ and $x_2$, we obtain the normal form of Lemma \ref{lem: Heisenberg}.

Finally, assume that $G$ contains $\tau_3$. Then, $f_1(x_1,x_2)(x_0^3-x_0x_1^2)$ is $\tau_3$-anti-invariant while $f_4(x_1,x_2)$ is $\tau_3$-invariant, so either $f_1 = 0$ or $f_4 = 0$. In both cases, $C$ is singular.
\end{proof}

Finally, assume that $G$ contains $g_3$ and $9 \nmid |G|$. The unique conjugacy class of involutions in $N_3$ is 
represented by $\tau: (x_0,x_1,x_2) \mapsto (x_0,-x_1+x_2,x_2)$. All elements of $N_3$ which are not in this conjugacy class have order $1$ or $3$, hence $G \subseteq \mathfrak{S}_3$ with equality if and only if $G$ is conjugate to $\langle g_3,\tau \rangle$, which, in turn, is conjugate in $\GL_3(\Bbbk)$ to the group of permutation matrices. Hence, this case is reduced to Lemma \ref{lem:S3}.

\subsection{Tame automorphism groups of smooth plane quartics} \label{sec: tamedeg2}
 As we mentioned in the introduction, the pair $(X,\Aut(X))$ can be lifted to characteristic zero if $\Aut(X)$ is tame. Since $X$ does not admit global vector fields, specialization of automorphisms is injective, hence the lift $X'$ of $X$ to characteristic $0$ has the same automorphism group as $X$. 
 
In particular, this means that the list of tame groups that can be realized as automorphism groups of del Pezzo surfaces of degree $2$ in positive characteristic is contained in the list of \cite[Theorem 6.5.2]{CAG}. To finish the classification of tame groups, we have to show that the equations in this list have smooth reduction modulo $p$ and that the automorphism group of the reduction cannot be larger for a general choice of parameters. In the following, we explain how to do this for the first five groups in the list of \cite[Theorem 6.5.2]{CAG}.

\begin{itemize}
    \item ${\rm L}_2(7)$: This group is tame if $p \neq 3,7$. It is realized by the Klein quartic in every such characteristic.
    \item $4^2 : \frakS_3$: This group is tame if $p \neq 3$. Its order is $96$, so by Theorem \ref{thm: possiblegroups}, it is maximal among all groups that can occur as automorphism groups of smooth plane quartics in these characteristics. It is realized by the Fermat quartic.
    \item $4.\frakA_4$: This group is tame if $p \neq 3$. Since it contains an element of order $12$, it does not embed into $4^2: \frakS_3$ or ${\rm L}_2(7)$, hence it is maximal. The equation given in \cite[Table 6.1]{CAG} is smooth if $p \neq 3$, so this group is realized.
    \item $\frakS_4$: This group is tame if $p \neq 3$. The equation given in \cite[Table 6.1]{CAG} defines a $1$-dimensional family of smooth quartics with an action of this group in any such characteristic. Since all smooth quartics with larger automorphism group are unique, a generic choice of parameters will yield a smooth quartic with automorphism group $\frakS_4$.
    \item $4.2^2$: This group is tame. Again, the equation given in \cite[Table 6.1]{CAG} defines a $1$-dimensional family of smooth plane quartics with an action of this group in any characteristic. By the above and Table \ref{tab: classificationl}, all smooth quartics whose automorphism group contains $4.2^2$ are isolated, hence a generic member of this $1$-dimensional family has $\Aut(C) = 4.2^2$.
\end{itemize}

The other cases work similarly and are left to the reader. We conclude that all groups in the list of \cite[Theorem 6.5.2]{CAG} occur as the full automorphism group of some smooth plane quartic in characteristic $p$ if $p$ does not divide the order of the group.

\subsection{Wild automorphism groups of smooth plane quartics}
Here, we summarize the classification of wild groups of automorphisms of smooth plane quartics by collecting the results of the previous sections. We use the notation of \cite[Table 6.1]{CAG} for wild groups in characteristic $p$ that also occur in characteristic $0$. After a suitable change of coordinates, Equation \eqref{eqndp2U9} is the reduction modulo $3$ of the family of Type VIII in \cite[Table 6.1]{CAG}, hence we call it Type VIII. The Klein quartic in characteristic $3$ is the reduction modulo $3$ of the three quartics of type I, II, and III in \cite[Table 6.1]{CAG}.
Now, our classification of wild groups can be summarized as follows:

\begin{table}[h!]
    \centering
    \begin{tabular}{|c|c|c|c|c|c|} \hline
     Type&$\Aut(C)$  & Order &Equation& \# Parameters & Conditions\\ \hline \hline 
     I/II/III &$\PSU_3(9)$ & $6048$ & \eqref{eqndp2PSU39}& $0$ & -- \\ \hline
     IV &$\mathfrak{S}_4$ & $24$ &\eqref{eqndp2S4}& $1$ & $a \not \in \mathbb{F}_3$ \\ \hline
     VIII &$\mathcal{H}_3(3) : C_2$ &  $54$ &\eqref{eqndp2U9}& $1$ & $c\ne 0$ \\ \hline
     IX &$\mathfrak{S}_3$ & $6$ & \eqref{eqndp2S3}&$2$ & general \\ \hline
     XI &$C_3$ & $3$ & \eqref{eqndp2C3} & $2$ & general \\ \hline
    \end{tabular}
    \caption{Wild automorphism groups of smooth plane quartics in \mbox{characteristic $3$}}
    \label{tab: classificationl}
\end{table}

\subsection{Conjugacy classes}
As explained in Section \ref{S:1}, the conjugacy class of a tame automorphism $g$ of a del Pezzo surface $X$ can be determined using the Lefschetz fixed point formula. Let $\overline{g}$ be the composition of $g$ with the Geiser involution. Then, by \cite[Lemma 6.5.1]{CAG}, the pair $(e(X^g),e(X^{\bar{g}}))$ depends only on the Jordan form of the automorphism of $\bbP^2$ induced by $g$ and $\overline{g}$, hence so does the pair of conjugacy classes of $g$ and $\overline{g}$. By Lemma \ref{lem:3C} and Lemma \ref{lem:3B}, the pair of conjugacy classes of $g$ and $\overline{g}$ is also uniquely determined by the Jordan form in $\bbP^2$ if $g$ has order $3$, and it is not hard to extend this to the case where $g$ is wild of higher order. We summarize the translation between Jordan forms and pairs of conjugacy classes in Carter notation in Table \ref{table: carter1} in the Appendix. 
In that table, we give the orders of $g$ and $\bar{g}$ in the first column, the Jordan form of the induced automorphism of $\mathbb{P}^2$ in the second column, the conjugacy classes of $g/\bar{g}$ in the third column, and the traces of the actions of $g$ and $\bar{g}$ on $\sfE_7$ in the last column.

Using this dictionary, one can determine the conjugacy classes in $W(\sfE_7)$ of all automorphisms of del Pezzo surfaces of degree $2$. The result is summarized in Table \ref{tbl:autodp2} in the Appendix. In that table, we give the name of the relevant family of del Pezzo surfaces in the first column, following \cite[Table 6.1]{CAG}. In the second column, we note the characteristics in which the family occurs. The third and fourth columns give the group $\Aut(X)$ and its order. The remaining columns give the number of elements of a given Carter conjugacy class in $\Aut(X)$.

\section{Del Pezzo surfaces of degree 1}
In this section, we classify automorphism groups of del Pezzo surfaces $X$ of degree $1$. Recall that $X$ is a double cover of the quadratic cone $\mathbb{P}(1,1,2) \subseteq \mathbb{P}^3$ branched over a sextic curve $C$. Moreover, we have $\Aut(X) \cong 2.\Aut(C)$, but, in contrast to the case of degree $2$, this central extension is not necessarily split, so the classification of automorphism groups of del Pezzo surfaces of degree $1$ is more complicated than the classification of automorphism groups of possible branch curves $C$.

\subsection{The elliptic pencil}
As we have already observed in Section \ref{S:1}, the blow up of the unique base point $\frako$ of a del 
Pezzo surface $X$ of degree 
$1$ has the structure of an elliptic surface $\pi: Y \to \mathbb{P}^1$ with Weierstrass equation
\begin{equation*}
y^2+x^3+a_2(t_0,t_1)x^2+a_4(t_0,t_1)x+a_6(t_0,t_1) = 0.
\end{equation*}
We have 
$$\pi^*\calO_{\bbP^1}(1) \cong \calO_Y(-K_Y).$$
Since $-K_X$ is ample, the fibration has no reducible fibers.

If $p = 3$, and the absolute invariant $j(F_\eta)$ of the generic fiber $F_{\eta}$ of $\pi$ (considered as an elliptic curve 
over the field $\Bbbk(\mathbb{P}^1)$ of rational functions on the base of the fibration) is equal to zero, we may assume that $a_2 = 0$. 
In this case $a_4\ne 0$ (otherwise $F_\eta$ is singular) and $a_6\ne 0$ (otherwise the surface $X$ has singular 
points over $V(a_4)$).

Obviously, any $g\in \Aut(X)$ fixes the point $\frako$ and hence lifts to an automorphism $\tilde{g}$ of $Y$ that preserves 
the corresponding section $S$. Conversely, any automorphism of $Y$ leaving invariant the section $S$ descends to $X$.

Thus
$$\Aut(X) \cong \Aut_S(Y) \coloneqq \{g\in \Aut(Y): g(S) = S\}.$$
Let
\beq
\phi:\Aut(X)\to \GL(T_{\frako}(X))
\eeq 
be the natural representation in the tangent space $T_{\frako}(X)$ of $X$ at the point $\frako$.  
Since any tame automorphism acts faithfully on 
$T_{\frako}(X)$, the kernel 
$$H_0: = \Ker(\phi)$$ 
of $\phi$ is a normal subgroup of a $p$-Sylow subgroup of $\Aut(X)$.
The projectivization of the representation $\phi$ defines a homomorphism
\beq
\bar{\phi}:\Aut_S(Y) \to \Aut(S) \cong \Aut(\bbP^1).
\eeq
We may view $\bar{\phi}$ as the natural action of $\Aut(X)$ on the base of the fibration $\pi$.

The action $\bar{\phi}$ allows us to write $\Aut(X)$ as an extension $H.P$, where
\begin{eqnarray*}
H & \coloneqq & \Ker(\bar{\phi}) \text{ and }\\
P & \coloneqq &\bar{\phi}(\Aut(X)) \subseteq \Aut(\mathbb{P}^1).
\end{eqnarray*}
In the following, we collect some preliminary restrictions on the groups $H$ and $P$.

The group $P$ is a subgroup of $\PGL_2(\Bbbk)$, so we can use the classification of finite subgroups of $\PGL_2(\Bbbk)$ to study it. The classification of finite tame subgroups of 
$\PGL_2(\Bbbk)$ 
coincides with the well-known classification of finite subgroups of $\PGL_2(\mathbb{C})$: The finite subgroups of $\PGL_2(\mathbb{C})$ are exactly the polyhedral groups, that is, the groups $C_n,D_n,\frakA_4,\frakS_4,$ and $\frakA_5$.
The classification of 
wild subgroups of $\PGL_2(\Bbbk)$ is contained in Theorem \ref{suz}.

The group $H$ is an extension $H_0: C_n$, where $C_n$ is a cyclic subgroup of order $n$ prime to $p$. Since $p$ is odd, $C_n$ contains the Bertini involution, so $2 \mid n$.
We can also describe $H$ as the automorphism group of the generic fiber 
$F_\eta$ of the elliptic fibration. The structure of this group is well-known (see e.g. \cite[Appendix A]{Silverman}) and we have the following normal forms:

\begin{lemma} \label{lem:autgenericfiber}
Let $X$ be a del Pezzo surface of degree $1$, let $F_{\eta}$ be the generic fiber of the anti-canonical pencil $|-K_X|$ with $j$-invariant $j \coloneqq j(F_{\eta})$, and let $H$ and $H_0$ be as above.
\begin{itemize}
    \item[(i)] If $j \not\in \{0,1728\}$, then $H_0 = \{1\}$ and $H = C_2$.
    \item[(ii)] If $p \neq 3$ and $j = 1728$, then $H_0 = \{1\}$, $H = C_4$ and $X$ admits the equation
    $$
    y^2 + x^3 + a_4x = 0.
    $$
    \item[(iii)] If $p \neq 3$ and $j = 0$, then $H_0 = \{1\}$, $H = C_6$ and $X$ admits the equation
    $$
    y^2 + x^3 + a_6 = 0.
    $$
    \item[(iv)] If $p = 3$ and $j = 0$, then $H_0 \subseteq C_3$ and $H$ is $C_2$ or $C_6$. Moreover, $H_0 = C_3$ if and only if $a_4$ is a square.
\end{itemize}
\end{lemma}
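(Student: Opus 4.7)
The plan is to identify $H$ with the $K$-automorphism group $\Aut_K(F_\eta)$, where $K = k(\bbP^1)$, and $H_0$ with its subgroup acting trivially on the invariant differential $\omega = dx/y$ (which controls the action on $T_\frako(X)$). Under the standing assumption $p \neq 2$, any admissible change of variables preserving a short Weierstrass form takes the shape $(x,y) \mapsto (u^2 x + r,\, u^3 y)$ with $u \in K^\times$ and $r \in K$, and multiplies $\omega$ by $u^{-1}$, so $H_0$ corresponds exactly to the locus $u = 1$.

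For part (i), the only admissible data are $(u,r) = (\pm 1, 0)$, yielding $H = C_2$ and $H_0 = 1$. For parts (ii) and (iii), where $p \neq 3$, any elliptic curve over $K$ admits a short Weierstrass form $y^2 = x^3 + A x + B$; here $j = 1728$ forces $B = 0$ and $j = 0$ forces $A = 0$, giving the stated equations. In characteristic $\neq 3$, the admissibility condition $3r = 0$ forces $r = 0$, after which the remaining constraint is $u^4 = 1$ (resp.\ $u^6 = 1$). Since $k$ is algebraically closed, these roots of unity all lie in $K$, and a generator acts on $\omega$ by a faithful character, so $H = C_4$ (resp.\ $C_6$) with $H_0 = 1$.

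For part (iv), I would work with $y^2 = x^3 + a_4 x + a_6$ in characteristic $3$, where the forcing $r = 0$ fails because $3r = 0$ is automatic. A direct substitution using $(x+r)^3 = x^3 + r^3$ gives the admissibility conditions $u^4 = 1$ and $(u^6 - 1)\, a_6 = r(r^2 + a_4)$. The $u = 1$ case yields $r(r^2 + a_4) = 0$, so $H_0 = C_3$ precisely when $-a_4$ — equivalently $a_4$, since $-1$ is a square in the algebraically closed field $k$ — is a square in $K$, and $H_0 = 1$ otherwise. The Bertini involution comes from $u = -1$, $r = 0$; it commutes with every element of $H_0$ and together they contribute $C_2$ or $C_6$ to $H$.

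The main obstacle is ruling out the case $u^2 = -1$, which would require $r \in K$ with $r^3 + a_4 r = a_6$ (using $-2 = 1$ in characteristic $3$) and would produce an element of order $4$ in $H$. I would show first that any such $r$ must in fact be a polynomial in $k[t]$, by clearing denominators in the cubic and invoking coprimality, and then pick a root $t_0 \in k$ of the non-zero binary form $a_4$. At $t_0$ one has $a_4(t_0) = 0$ and $r(t_0)^3 = a_6(t_0)$; differentiating $r^3 + a_4 r = a_6$ in characteristic $3$ gives $a_4' r + a_4 r' = a_6'$, which at $t_0$ reduces to $a_4'(t_0)\, r(t_0) = a_6'(t_0)$. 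Substituting $x_0 \coloneqq -r(t_0)$ and $w = 0$ into the defining equation of $X \subset \bbP(1,1,2,3)$ and its partial derivatives in $x$, $w$, and the base coordinates, one checks that all of them vanish at $(t_0, x_0, 0)$, which contradicts the smoothness of $X$. Hence no such $r$ exists and $H \in \{C_2, C_6\}$, completing the lemma.
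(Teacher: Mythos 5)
Your proof is correct and follows essentially the same strategy as the paper's: in both, the crux of case (iv) is that an order-$4$ element of $H$ in characteristic $3$ would force a Weierstrass model that is singular over a root of $a_4$, and the criterion $H_0 = C_3$ if and only if $a_4$ is a square comes from solving $r^2 = -a_4$ for the $u=1$ substitutions. The only cosmetic difference is that the paper moves the $2$-torsion section fixed by the order-$4$ element to $x=y=0$ (so that $a_6=0$) and reads off the singularity from that global normal form, whereas you exhibit the same singular point $(t_0,-r(t_0),0)$ directly by differentiating $r^3+a_4r=a_6$ at a root $t_0$ of $a_4$.
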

\begin{proof}
The first three cases are well-known, and $H$ acts as $(x,y,t_0,t_1)\mapsto (u^2x,u^3y,t_0,t_1)$, where $u^n = 1$ with $ n = 2,4,6$, respectively.

In Case (iv), the automorphism group of the geometric generic fiber of $\pi$ is the alternating group $\frakA_4$. To show that $H_0 \in \{1,C_3\}$ and $H/H_0 = C_2$, it thus suffices to show that no automorphism of order $4$ can descend to $F_{\eta}$.
Assume that $F_{\eta}$ admits an automorphism of order $4$. Its fixed locus on $X$ is a $2$-torsion section of the elliptic pencil $\pi: Y \to \mathbb{P}^1$, which, after a suitable change of the $x$-coordinate, we may assume to be given by $x = y = 0$. In other words, $X$ admits a Weierstrass equation of the form
$$
y^2 + x^3 + a_4x = 0.
$$
This equation is singular over the roots of $a_4$, so $X$ is not a del Pezzo surface.

By \cite[Appendix A, Proposition 1.2]{Silverman} the $\Bbbk(\mathbb{P}^1)$-linear substitutions that preserve equation \eqref{eq:dp1} are of the form
$$
(x,y,t_0,t_1) \mapsto (u^2x+r_2,u^3y,t_0,t_1),
$$
where $u^4 = 1$ and 
$$
r_2^3+r_2a_4+(1-u^2)a_6 = 0.
$$
Since $H/H_0 = C_2$, every element of $H$ satisfies $u^2 = 1$, so $r_2^3 + a_4r_2 = 0$. A non-zero $r_2$ solving this equation exists if and only if $a_4$ is a square. Hence, $H_0 = C_3$ if and only if $a_4$ is a square.
\end{proof}

\subsection{List of possible groups} The following lemma shows that wild automorphism groups of del Pezzo surfaces of degree $1$ in 
odd characteristic can only exist if $p = 3,5$. 

\begin{lemma}\label{lem:wildcyclic1} 
Assume $C_p$ acts faithfully on a del Pezzo surface $X$ of degree $1$. Then, $p = 3$ or $p = 5$. 
\end{lemma}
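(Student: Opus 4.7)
The plan is to exclude $p=7$; since $\rho\colon \Aut(X)\hookrightarrow W(\sfE_8)$ is injective by Theorem \ref{thm:weylgroups1} and the only odd prime divisors of $|W(\sfE_8)|$ are $3$, $5$, and $7$ by Theorem \ref{thm:weylgroups2}, this suffices. So I would assume for contradiction that $g\in\Aut(X)$ has order $7$ in characteristic $p=7$, and lift $g$ to an automorphism $\tilde{g}$ of the rational elliptic surface $\pi\colon Y=\Bl_{\frako}(X)\to\bbP^1$ preserving the zero section $S$.

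The first step is to show that $\tilde{g}$ acts non-trivially on the base. Otherwise $\tilde{g}$ would lie in the kernel $H$ of $\bar{\phi}$, but Lemma \ref{lem:autgenericfiber} forces $|H|\in\{2,4,6\}$ in characteristic $7$, contradicting $\mathrm{ord}(g)=7$. Hence the image of $\tilde{g}$ in $\PGL_2(\Bbbk)$ has order $7$, and by Theorem \ref{suz}(1) (applied with $\xi=1$ and $A=\bbF_7$) it is unipotent, so it has exactly one fixed point on $\bbP^1$. Consequently a unique fiber $F_0$ of $\pi$ is $\tilde{g}$-invariant, and all other fibers fall into $\langle\tilde{g}\rangle$-orbits of size $7$.

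The main step is then an Euler-characteristic count. Since $Y\cong\Bl_9(\bbP^2)$, we have $e(Y)=12$; and because smooth elliptic fibers have Euler characteristic $0$, this equals the sum of the Euler characteristics of the singular fibers of $\pi$. The ampleness of $-K_X$ forces every fiber of $\pi$ to be irreducible, so by the Kodaira--N\'eron classification each singular fiber has type $I_1$ or $II$, with Euler characteristic $1$ or $2$. Partitioning singular fibers by their $\langle\tilde{g}\rangle$-orbits, and noting that every non-fixed orbit contributes a multiple of $7$, yields
\[
12 \;\equiv\; e(F_0)^{\mathrm{sing}} \pmod{7},
\]
where $e(F_0)^{\mathrm{sing}}\in\{0,1,2\}$ (taking value $0$ when $F_0$ is smooth). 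Since $12\equiv 5\pmod 7$, no admissible value works, giving the desired contradiction.

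The substantive inputs are the restriction to irreducible Kodaira fiber types coming from $-K_X$ ample (already invoked in the text preceding the lemma) and the standard fact that an order-$p$ element of $\PGL_2(\Bbbk)$ in characteristic $p$ has a unique fixed point on $\bbP^1$. Beyond these, no serious obstacle arises: the conclusion is a one-line arithmetic comparison modulo $7$.
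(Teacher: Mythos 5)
Your proof is correct, but it takes a genuinely different route from the paper. The paper's proof also reduces immediately to excluding $p=7$ via $W(\sfE_8)$, but then argues combinatorially: an order-$7$ automorphism permutes the $120$ pairs of $(-1)$-curves whose sum lies in $|-2K_X|$, and since $120\equiv 1\pmod 7$ it preserves one pair; having odd order it fixes each curve in that pair, so blowing one down produces an order-$7$ automorphism of a del Pezzo surface of degree $2$, which is excluded by Corollary \ref{cor: FermatKlein}. Your argument instead stays entirely within the degree-$1$ geometry: it uses the elliptic fibration, the uniqueness of the fixed point of a unipotent element of $\PGL_2(\Bbbk)$ in characteristic $7$, and the count $e(Y)=12=\sum_v e(F_v)$ with each singular fiber irreducible of type $I_1$ or $II$, hence of Euler characteristic $1$ or $2$; the congruence $12\equiv 5\pmod 7$ then gives the contradiction. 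Both arguments are sound. The paper's is shorter given that the degree-$2$ classification is already in hand, and it fits the recurring strategy of descending to lower degree; yours is self-contained (it does not rely on the plane-quartic invariant theory behind Lemma \ref{lem:wildcyclic}) and, as a bonus, run for $p=5$ it shows the invariant fiber must be cuspidal, consistent with Theorem \ref{p=5}. The only point worth making explicit in your write-up is that the identity $e(Y)=\sum_v e(F_v)$ has no wild (Swan) correction terms precisely because $p=7\ge 5$, so that elliptic fibrations are tamely ramified; this is standard but is the one place where the characteristic enters the Euler-characteristic bookkeeping.
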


\begin{proof} By Theorem \ref{thm:weylgroups2}, the prime divisors of the order of $W(\sfE_8)$ are $2,3,5$, and $7$. So, 
it suffices to exclude $p = 7$. An automorphism $g$ of order $7$ acts on the set of $120$ tritangent 
planes preserving one of them. It fixes each of the $(-1)$-curves in the preimage pair.
Blowing one of them down, we obtain that $g$ descends to a wild automorphism of a del Pezzo surface of degree $2$.
 However, by Corollary \ref{cor: FermatKlein}, we know that there are no wild automorphisms of order $7$ on del Pezzo surfaces of degree $2$.
\end{proof}

As in the case of degree $2$, the restrictions obtained so far allow us to give a preliminary list of possible automorphism groups of del Pezzo surfaces of degree $1$.

\begin{corollary} \label{cor: possiblegroups}
Let $G$ be a finite group acting faithfully on a del Pezzo surface $X$ of degree $1$. Then, one of 
the following holds.
\begin{enumerate}
    \item $G$ is tame and $G \cong H : P$, where $H$ is cyclic and $P$ is a polyhedral group.
    \item $G$ is wild, $p = 5$, and $G \cong H : P$, where $H$ is tame and cyclic, and $$P \in \{ 5^n:C_m, {\rm L}_2(5^n), \PGL_2(5^n) \}$$ for some $n \geq 1$ and $(5,m) = 1$.
    \item $G$ is wild, $p = 3$, and $G \cong H : P$, where $H$ is cyclic, and 
    $$P \in \{ 3^n:C_m, D_m, \frakA_5, {\rm L}_2(3^n), \PGL_2(3^n) \}$$ for some $n \geq 0$ and $(3,m) = 1$.
\end{enumerate}
\end{corollary}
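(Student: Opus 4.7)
The plan is to combine the structural decomposition $G = H.P$ established above with Lemma \ref{lem:autgenericfiber} (classifying $H$) and Theorem \ref{suz} (classifying wild subgroups of $\PGL_2(\Bbbk)$), and then read off the three cases of the corollary.

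First, by Lemma \ref{lem:autgenericfiber} the subgroup $H$ is cyclic in every case: it is one of $C_2$, $C_4$, $C_6$, with a possible further extension by $H_0 \cong C_3$ when $p = 3$ and $a_4$ is a square. In particular, $H$ is tame precisely when $H_0 = 1$, which is automatic for $p \neq 3$. The image $P = \bar{\phi}(G)$ is a finite subgroup of $\PGL_2(\Bbbk)$.

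If $G$ is tame, then so is $P$; finite tame subgroups of $\PGL_2(\Bbbk)$ lift to characteristic zero and hence are precisely the polyhedral groups, giving case (1). If $G$ is wild, then $p \in \{3,5\}$ by Lemma \ref{lem:wildcyclic1}. For $p = 5$, Lemma \ref{lem:autgenericfiber} forces $H_0 = 1$, so $H$ is tame and the wildness of $G$ is carried by $P$. I would then apply Theorem \ref{suz} to $P$: case (1) of that theorem gives $P \cong 5^n : C_m$, case (4) gives ${\rm L}_2(5^n)$ or $\PGL_2(5^n)$, and case (3) is excluded since $p \neq 3$. The dihedral case (2) is eliminated as follows: any $p$-element of $\PGL_2(\Bbbk)$ is unipotent of order exactly $p$ (since the $p$-th power of a unipotent upper triangular matrix in characteristic $p$ is the identity), and its centralizer is the unipotent radical of the Borel subgroup stabilizing it, isomorphic to $\bbG_a$, which contains no nontrivial semisimple element. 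Hence $\PGL_2(\Bbbk)$ contains no element of order $pm$ with $m > 1$, so any wild dihedral subgroup $D_{2n}$ must satisfy $n = p$, giving $D_{2p} \cong p:C_2$, which is already an instance of $p^n:C_m$. This settles case (2).

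For $p = 3$, the same analysis of wild $P$ through Theorem \ref{suz} yields the families $3^n:C_m$, ${\rm L}_2(3^n)$, and $\PGL_2(3^n)$; additionally, case (3) of that theorem contributes the exceptional wild subgroup $\mathfrak{A}_5 \cong {\rm L}_2(5) \subset {\rm L}_2(9)$. When $H$ itself is wild (so $H_0 = C_3$), the quotient $P$ is allowed to be tame polyhedral: since $\mathfrak{A}_4 = {\rm L}_2(3)$, $\mathfrak{S}_4 = \PGL_2(3)$, and $\mathfrak{A}_5$ all have order divisible by $3$ and are already accounted for among the wild groups above, the only additional tame polyhedral groups are $C_m$ and $D_m$ with $(m,3) = 1$; these correspond to the $n = 0$ entries in case (3). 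The main obstacle in the argument is the dihedral reduction via the centralizer computation in $\PGL_2(\Bbbk)$; the remainder is a matching exercise between the structural decomposition of $G$ and the two classification theorems.
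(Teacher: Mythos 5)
Your proposal is correct and follows essentially the same route as the paper: decompose $G$ as $H.P$ via $\bar{\phi}$, use Lemma \ref{lem:autgenericfiber} to see that $H$ is cyclic (and tame unless $p=3$ with $j=0$), invoke Lemma \ref{lem:wildcyclic1} to restrict to $p\in\{3,5\}$, lift the tame case to characteristic $0$ to get polyhedral $P$, and read off the wild possibilities for $P$ from Theorem \ref{suz}. The paper compresses the last step into one sentence, whereas you usefully make explicit the absorption of the wild dihedral case into $p^n:C_m$ via the centralizer of a unipotent element and the appearance of tame polyhedral $P$ (hence the $n=0$ and $D_m$ entries) when $H$ itself is wild for $p=3$; these details are consistent with, and implicit in, the paper's argument.
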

\begin{proof}
By Lemma \ref{lem:autgenericfiber}, we have $G \cong H : P$, where $H$ is a cyclic group and $P \subseteq \PGL_2(\Bbbk)$.

If $G$ is tame, then it lifts to characteristic $0$, so $P$ embeds into $\PGL_2(\mathbb{C})$, hence it is a polyhedral group. 
If $G$ is wild, then $p = 3,5$ by Lemma \ref{lem:wildcyclic1}. The description of the possible $P$ follows from Theorem \ref{suz}. 
\end{proof}

In the following sections, we first classify the wild automorphism groups of del Pezzo surfaces of degree $1$ in characteristic $5$ and $3$. Afterwards, we study the tame automorphism groups using the approach of Section \ref{sec: tamedeg2} in Section \ref{sec: tamedeg1}.

\subsection{Wild automorphism groups in characteristic $5$} \label{sec: char5}
In this section, we classify wild automorphism groups of del Pezzo surfaces of degree $1$ in characteristic $p = 5$.

 \begin{theorem}\label{p=5} Let $X$ be a del Pezzo surface of degree $1$ in characteristic $p = 5$. Assume that $G = \Aut(X)$ is wild. Then, one of the following cases occurs.
 \begin{itemize}
 \item[(i)] $G \cong 2.D_{10}$ and $X$ is given by
 \beq\label{eqndp1D10}
y^2+x^3+ct_0^4x+t_0t_1(t_1^4-t_0^4) = 0 \text{ with } c \neq 0.
 \eeq
 \item[(ii)] $G\cong C_6.\PGL_2(5) \cong 3\times (2.\PSL_2(5).2) \cong 3\times (\SU_2(25).2)$ 
  \beq\label{eqndp1660}	
  y^2+x^3+t_0t_1(t_1^4-t_0^4) = 0
  \eeq
 \end{itemize}
 \end{theorem}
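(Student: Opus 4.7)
The plan is to exploit a wild element of order $5$ in $G$ to reduce the Weierstrass equation of $X$ to a one-parameter normal form, and then read off $\Aut(X)$ directly.

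Since $|H|$ divides $6$ by Lemma \ref{lem:autgenericfiber} (as $p = 5 \neq 3$), wildness of $G$ forces $P$ to contain an element of order $5$, so we may choose $\sigma \in G$ whose image $\bar\phi(\sigma) \in \Aut(\bbP^1)$ has order $5$, and coordinates on the base so that $\bar\phi(\sigma): t_1 \mapsto t_1 + t_0$ with fixed point $[1:0]$. Completing the cube in the Weierstrass equation (possible since $p \neq 3$), invariance under the lift $(x,y,t_0,t_1) \mapsto (u^2 x, u^3 y, t_0, t_1 + t_0)$ forces $a_4 = c\,t_0^4$ and $a_6 = \alpha t_0^6 + \beta t_0 t_1(t_1^4 - t_0^4)$, since in characteristic $5$ these exhaust the $t_1$-translation-invariant binary forms of degrees $4$ and $6$. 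Smoothness of the Weierstrass model forces $\beta \neq 0$, so rescaling gives $\beta = 1$. Crucially, the further base change $t_1 \mapsto t_1 + \gamma t_0$ commutes with $\bar\phi(\sigma)$ and shifts $\alpha$ to $\alpha + \gamma^5 - \gamma$, so surjectivity of the Artin--Schreier map $\gamma \mapsto \gamma^5 - \gamma$ on the algebraically closed field $\Bbbk$ lets us set $\alpha = 0$. This yields the one-parameter normal form
\[
y^2 + x^3 + c\,t_0^4 x + t_0 t_1(t_1^4 - t_0^4) = 0, \qquad c \in \Bbbk.
\]

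It remains to compute $\Aut(X)$ depending on whether $c$ vanishes. If $c = 0$, then $a_4 = 0$ gives $H \cong C_6$ by Lemma \ref{lem:autgenericfiber}(iii), the sextic $a_6$ vanishes precisely on $\bbP^1(\bbF_5)$, and the stabilizer of this subset in $\Aut(\bbP^1)$ is $\PGL_2(\bbF_5)$. Each such transformation scales $a_6$ by a factor that can be absorbed by an $(x,y)$-rescaling, giving $\PGL_2(5) \subseteq P$, while the converse inclusion follows from the fact that $P$ preserves the singular-fiber locus. Hence $G \cong C_6.\PGL_2(5)$, and the alternative descriptions come from $C_6 = C_3 \times C_2$ with central $C_3$, together with the identifications $\SU_2(25) = \SL_2(5) = 2.\frakA_5$. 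If $c \neq 0$, a discriminant check gives smoothness of $X$ with non-constant $j$-invariant, so $H = C_2$; among the scalings $t \mapsto \zeta t$ the lifting conditions $u^4 = 1$ (from $a_4$) and $u^6 \zeta = 1$ (from $a_6$) force $\zeta^2 = 1$, and the inversion $t \mapsto 1/t$ fails because $a_4 = c t_0^4$ is not symmetric in $(t_0, t_1)$. Thus $P = D_{10}$ and $G \cong 2.D_{10}$, as claimed.

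The main technical subtlety is the Artin--Schreier reduction eliminating the parameter $\alpha$: without it one would find an apparent extra family (with $c = 0$, $\alpha \neq 0$, and naive automorphism group $C_{30}$) that is in fact isomorphic to case (ii) via the base change above. All a~priori possibilities for wild $P$ in Theorem \ref{suz} not listed above (namely $\PSL_2(5^n), \PGL_2(5^n)$ with $n \geq 2$, or a group containing $5^2$) are automatically excluded, since we have computed $\Aut(X)$ exactly for every $X$ satisfying the hypothesis; in particular, a second shift $t_1 \mapsto t_1 + \gamma t_0$ preserves the normal form only if $\gamma^5 - \gamma = 0$, so the Sylow $5$-subgroup of $P$ never exceeds $C_5$.
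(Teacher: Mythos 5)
Your proposal is correct and follows essentially the same route as the paper: use the order-$5$ translation on the base to force $a_4=ct_0^4$ and $a_6$ into the span of the degree-$6$ invariants, normalize away the $t_0^6$-term (your Artin--Schreier shift is exactly the paper's ``choose coordinates so that $a_6$ vanishes at $[1,0]$''), and then compute $H$ from the $j$-invariant and $P$ from the stabilizer of $V(a_4)$ and $V(a_6)=\bbP^1(\bbF_5)$ together with the lifting conditions on $(x,y)$-rescalings. The only blemish is the harmless slip that the fixed point of $t_1\mapsto t_1+t_0$ is $[0{:}1]$, not $[1{:}0]$.
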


\begin{proof}
Let $g$ be a wild automorphism of order $5$. Since $g\not\in H = \Ker(\bar{\phi})$ by Lemma \ref{lem:autgenericfiber}, it 
acts non-trivially on the base 
of the elliptic fibration. As $g$ has one 
fixed point in $\bbP^1$, we may assume this point to be $[0,1]$, and that $g$ acts by $(t_0,t_1)\mapsto (t_0,t_0+t_1)$.
The roots of $a_4$ coincide with the roots of the $j$-invariant, hence they are preserved by $g$. But $a_4$ has degree $4$, so $a_4 = c_1t_0^4$ for some $c_1 \in \Bbbk$.
Then, the discriminant of the elliptic fibration is given by $\Delta = 4a_4^3 + 27a_6^2$, so $a_6$ is preserved by $g$ as well and $t_0^2 \nmid a_6$, for otherwise $\Delta$ would vanish with multiplicity at least $3$ at $[0,1]$ and the fiber over this point would be reducible. Thus, choosing coordinates such that $a_6$ has a root at $[1,0]$, we may assume that 
$$a_4 = ct_0^4,\quad a_6 = t_0\prod_{i=0}^4(t_1+ it_0) = t_0t_1(t_1^4 - t_0^4).$$
In other words, the surface $X$ admits the equation 
\beq
y^2+x^3 + ct_0^4x+t_0t_1(t_1^4-t_0^4) =  0.
\eeq

If $c \ne  0$, then $j(F_{\eta}) \not \in \{0,1728\}$, hence $H \cong C_2$. The group $P = \bar{\phi}(\Aut(X)) \subseteq \Aut(\mathbb{P}^1)$ preserves $V(a_4) = [0,1]$ and the set $V(a_6) = \mathbb{P}^1(\mathbb{F}_5)$, hence $P \subseteq C_5 : C_4$. Since $c \neq 0$, the discriminant $\Delta$ has a double root at $[0,1]$ and $10$ simple roots.
Now, $P$ cannot be $C_5 : C_4$, since the square of an automorphism of order $4$ would fix $V(a_4)$ and at least two simple roots of $\Delta$, hence all of $\mathbb{P}^1$. However, $P$ contains an involution lifting to $X$ as an automorphism of order $4$ given by $(x,y,t_0,t_1) \mapsto (-x,\lambda y,t_0,-t_1)$ with $\lambda^4 = 1$. Hence, in this case, $\Aut(X)$ is the binary dihedral group of order $20$.

If $c = 0$, then $j(F_{\eta}) = 0$, hence $H \cong C_6$. The group $P = \bar{\phi}(\Aut(X)) \subseteq \Aut(\mathbb{P}^1)$ preserves $V(a_6) = \mathbb{P}^1(\mathbb{F}_5)$, hence $P \subseteq \PGL_2(\mathbb{F}_5)$. Conversely, every element of $\GL_2(\mathbb{F}_5)$ sends $a_6$ to $\lambda^6 a_6$ for some $\lambda \in \Bbbk^{\times}$, and composing this substitution with $(x,y)$ to $(\lambda^2 x, \pm \lambda^3 y)$ gives a lift of every element of $\PGL_2(\mathbb{F}_5)$ to $\Aut(X)$.
\end{proof}

\subsection{Wild automorphism groups in characteristic $3$}
The case $p = 3$ is more difficult. If $\Aut(X)$ is a wild group, then so is $H$ or $P$.
Recall that $X$ is given by an equation of the form
$$
y^2 + x^3 + a_2x^2 + a_4x + a_6 = 0.
$$
The discriminant $\Delta$ and $j$-invariant $j$ of the associated elliptic fibration $\pi$ are
\begin{eqnarray*}
\Delta &=& -a_2^3a_6 - a_2^2a_4^2 - a_4^3 \\
j &=& \frac{a_2^6}{\Delta}
\end{eqnarray*}
We treat the cases $j(F_\eta) \neq 0$ and $j(F_\eta) = 0$ separately.

\begin{theorem}
Let $X$ be a del Pezzo surface of degree $1$ in characteristic $p = 3$ such that $\Aut(X)$ is wild. Assume 
that $j(F_{\eta}) \neq 0$. Then, $X$ admits one of the following two Weierstrass equations:
\beq \label{eq:3.1-1}
y^2 + x^3 + at_0^2x^2 + t_0t_1(t_1^2-t_0^2)x + bt_0^6 + ct_0^4t_1^2 + ct_0^2t_1^4 + ct_1^6
\eeq
with $a,b,c \neq 0$ and $ac \neq 1$, or
\beq \label{eq:3.1-1b}
y^2 + x^3 + t_0^2x^2 + t_0t_1^3x + t_1^6 + at_0^3t_1^3 + bt_0^5t_1
\eeq 
with $b \neq 0$. Moreover,
\begin{enumerate}
    \item if $X$ is given by Equation \eqref{eq:3.1-1}, then $\Aut(X) = C_6$.
    \item if $X$ is given by Equation \eqref{eq:3.1-1b}, then $\Aut(X) = C_2 \times C_3^2$.
\end{enumerate}
\end{theorem}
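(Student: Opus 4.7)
The plan is to use $j(F_\eta)\neq 0$ to fix the structure of $\Aut(X)$ as an extension $H.P$ with $H$ tame, then derive a normal form for the Weierstrass coefficients by combining the characteristic-$3$ substitution calculus with invariant theory for a wild unipotent action on $\mathbb{P}^1$, and finally to separate two cases according to the size of the wild part of $P = \bar\phi(\Aut(X))$.

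First, Lemma \ref{lem:autgenericfiber}(i) gives $H = C_2$ (the Bertini involution), which is tame, so the wildness of $\Aut(X)$ forces $P$ to contain an element $\bar g$ of order $3$. By Theorem \ref{suz} I normalize coordinates so that $\bar g:(t_0,t_1)\mapsto(t_0,t_0+t_1)$, with unique fixed point $[0:1]$. Since the zero locus of $j = a_2^6/\Delta$ is $P$-invariant, has degree $2$, and must be contained in the fixed locus of $\bar g$, it equals $\{[0:1]\}$, forcing $a_2 = \alpha\, t_0^2$ with $\alpha \neq 0$. Any lift of $\bar g$ to $\Aut(X)$ then has the form $(x,y,t_0,t_1)\mapsto(u^2 x+p(t_0,t_1),\,u^3 y,\,t_0,\,t_0+t_1)$ with $u^2 = 1$ and $p$ quadratic; the standard Weierstrass substitution identities in characteristic $3$,
\[
a_4(\phi_{\bar g})=a_4+a_2 p,\qquad a_6(\phi_{\bar g})=a_6 - p^3 + a_2 p^2 - a_4 p,
\]
determine $p$ uniquely from $a_4$ and force $a_4, a_6$ to lie in prescribed cosets of the $\bar g$-invariant subring $\Bbbk[t_0, N(t_1)]$ with $N(t_1)=t_1^3 - t_0^2 t_1$.

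Using the remaining freedom to rescale $(t_0,t_1,x,y)$ and to perform $\bar g$-equivariant shifts $x\mapsto x+\beta t_0^2$ that eliminate redundant coefficients, the generic $\bar g$-invariant Weierstrass equation reduces to the three-parameter family \eqref{eq:3.1-1}, subject to an open non-degeneracy condition ensuring smoothness. To detect when the wild $3$-Sylow of $P$ grows beyond $\langle\bar g\rangle$, I search for a second commuting wild element $\bar g_\mu:(t_0,t_1)\mapsto(t_0,t_1+\mu t_0)$ lifting to $\Aut(X)$: the $a_6$-compatibility is a polynomial equation in $\mu$ whose coefficients are explicit functions of $(a,b,c)$. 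On the open locus characterized by $ac\neq 1$, this polynomial has no non-zero root, so the wild $3$-Sylow is exactly $C_3$, and (after a short check that no additional tame automorphism appears) $\Aut(X) = C_6$. On the complementary closed locus, the set of admissible $\mu$ is automatically closed under addition, hence an $\mathbb{F}_3$-subspace of $\Bbbk$, so the wild $3$-Sylow enlarges to $C_3^2$; a further $\bar g$-equivariant coordinate change brings the equation into the form \eqref{eq:3.1-1b}, and one verifies that no larger tame piece appears, giving $\Aut(X) = C_2\times C_3^2$.

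The hardest step I anticipate is ruling out the larger wild subgroups of $P$ from the list in Corollary \ref{cor: possiblegroups}(3), namely $C_3^n$ with $n\ge 3$, dihedral groups $D_m$ with $3\mid m$, $\frakA_5$, and $\mathrm{L}_2(3^n)$ or $\PGL_2(3^n)$ with $n\ge 1$. Each of these would either require a second fixed point on $\mathbb{P}^1$---incompatible with our normalization $a_2 = \alpha t_0^2$---or would impose more invariance conditions on the coefficients $(a_2,a_4,a_6)$ of degrees $2,4,6$ than they can satisfy. A secondary technical difficulty is the careful bookkeeping of the cocycle identity $p_{g_1 g_2}=p_{g_1}\circ\phi_{g_2}+p_{g_2}$ governing how the $x$-shift combines with the base change in characteristic $3$.
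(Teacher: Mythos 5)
Your strategy mirrors the paper's almost step for step: normalize the wild element of $P$ to $t_1 \mapsto t_1 + t_0$, use the $P$-invariance of the zero locus of $j$ to force $a_2 = \alpha t_0^2$, run the characteristic-$3$ Weierstrass substitution identities to pin down $a_4$ and $a_6$, and then decide the size of $P$ by an explicit liftability computation. The piece you defer as the ``hardest step'' (ruling out the non-unipotent possibilities for $P$ from Corollary \ref{cor: possiblegroups}(3)) is in fact short and is exactly how the paper proceeds: every element of $P$ fixes the unique zero of $j$, hence acts as $t_1 \mapsto At_1 + Bt_0$; invariance of $a_2 = at_0^2$ forces $u^2 = 1$, and reducing the $a_4$-identity modulo $t_0^2$ forces $A^3 = 1$, i.e.\ $A = 1$ in characteristic $3$. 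So $P$ is an elementary abelian $3$-group and all dihedral, $\frakA_5$ and $\PSL_2/\PGL_2$ cases die simultaneously; this also disposes of your parenthetical ``short check that no additional tame automorphism appears.''

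The genuine gap is in your case (2). From ``the set of admissible $\mu$ is closed under addition, hence an $\mathbb{F}_3$-subspace of $\Bbbk$'' you conclude ``so the wild $3$-Sylow enlarges to $C_3^2$,'' but being a subspace strictly containing $\mathbb{F}_3$ only gives a lower bound: a priori the subspace could have dimension $3$ or more, and then $\Aut(X)$ would be strictly larger than $C_2 \times C_3^2$, contradicting the statement you are proving. The missing input is the explicit shape of the liftability condition: for Equation \eqref{eq:3.1-1b} it reads $B^9 + aB^3 + bB = 0$ (with the $x$-shift $b_2 = B^3t_0^2$), an additive polynomial of degree $9$ whose derivative is $b \neq 0$, so it is separable and has exactly $9$ roots; only then is the $3$-Sylow exactly $C_3^2$. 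A smaller imprecision of the same kind occurs in your case (1): the admissibility polynomial certainly has the non-zero roots $\mu = \pm 1$ (the powers of $\bar g$ do lift), so ``no non-zero root'' cannot be what is meant; what is needed, and what the paper extracts from the coefficient of $t_0^6t_1^6$ in $h^*\Delta - \Delta$, namely $(B - B^3)(a^2 - a^3c) = 0$ with $a^2 - a^3c \neq 0$, is that there are no roots outside $\mathbb{F}_3$.
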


\begin{proof}
Note that $j(F_{\eta}) \neq 0$ is equivalent to $a_2 \neq 0$. 
Moreover, $H = C_2$, so if $\Aut(X)$ is wild, then $P$ is wild. 
We may assume that $P$ contains the substitution $g: t_1 \mapsto t_1 + t_0$. 
Since, the roots of $a_2$ are the roots of the $j$-map, $a_2$ is $g$-invariant, 
hence, after rescaling $x,y$, we may assume that $a_2 = t_0^2$.  Note that, under this assumption on $a_2$, substitutions of the form $x \mapsto x + b_2$ preserve $a_2$ and the class $\overline{a_4}$ of $a_4$ modulo $t_0^2$.

Now, a suitable substitution in $x$ allows us to eliminate the monomials 
$t_0^4,t_0^3t_1,$ and $t_0^2t_1^2$ in $a_4$, so we can write $a_4 = at_0t_1^3 + bt_1^4$. As $\overline{a_4}$ is $g$-semi-invariant, we must have $b = 0$. After scaling $t_0$, we may assume 
that $a_4 = \epsilon t_0t_1^3$, where $\epsilon\in \{0,1\}$, and that $a_2 = at_0^2$ for some $a \neq 0$.

Since $g$ lifts to $X$ and $g$ has odd order, there is an element of order $3$ in $\Aut(X)$ that induces $g$. 
By \cite[Appendix A, Proposition 1.2]{Silverman}, this means that there exists $b_2 \in \Bbbk[t_0,t_1]_2$ such that the equation of $X$ is preserved by $(y,x,t_0,t_1) \mapsto (y,x+b_2,t_0,t_1+t_0)$. This $b_2$ satisfies
\begin{eqnarray}
a_4(t_0,t_1) &=& a_4(t_0,t_1+t_0) - a b_2 t_0^2 \label{eq: a_6jnon0part1} \\
a_6(t_0,t_1) &=& b_2^3 + at_0^2b_2^2 + a_4(t_0,t_1+t_0)b_2 + a_6(t_0,t_1+t_0). \label{eq: a_6jnon0}
\end{eqnarray}

If $\epsilon = 0$, Equation \eqref{eq: a_6jnon0part1} shows that $b_2 = 0$ and comparing the coefficients of $t_0^2t_1^4$ in Equation \eqref{eq: a_6jnon0} shows that $a_6$ does not contain the monomial $t_0t_1^5$. Computing the partial derivatives, we see that this implies that $X$ is singular over $[0,1]$.

Thus, we have $\epsilon = 1$. The substitution $x \mapsto x + a^{-1}t_0t_1$ transforms $a_4$ to $t_0t_1^3 - t_0t_1^3$, thereby making it $g$-invariant. With this choice of $a_4$, Equation \eqref{eq: a_6jnon0part1} yields $b_2 = 0$. We write $a_6 = bt_0^6 + ct_0^5t_1 + dt_0^4t_1^2 + et_0^3t_1^3 + ft_0^2t_1^4 + gt_0t_1^5 + ht_1^6$. 
Comparing coefficients of the monomials in Equation \eqref{eq: a_6jnon0} above, we find 
$$(e,f,g,h) = (-c,d,0,d).$$
We obtain the normal form
$$
 y^2 + x^3 + at_0^2x^2 + t_0t_1(t_1^2-t_0^2)x + bt_0^6 + c(t_0^5t_1 - t_0^3t_1^3) + d(t_0^4t_1^2 + t_0^2t_1^4 + t_1^6).
$$

If $ad \neq 1$, a substitution of the form $(t_1,x) \mapsto (t_1 + \lambda t_0, x + a^{-1}(\lambda^3-\lambda))$ can be used to set $c = 0$. This yields Equation \eqref{eq:3.1-1} after renaming $d$ to $c$. Computing the partial derivatives, we see that Equation \eqref{eq:3.1-1} defines a smooth surface if and only if $b,c \neq 0$ and $bc \neq 1$. Moreover, note that $a \neq 0$ is equivalent to $j(F_{\eta}) \neq 0$. The discriminant of this equation is 
$$
\Delta = t_0^3(-a^3bt_0^9 + (a^2 - a^3c)t_0^7t_1^2 + t_0^6t_1^3 + (a^2 - a^3c)t_0^5t_1^4 + (a^2 - a^3c)t_0^3t_1^6 + t_1^9).
$$
To compute $\Aut(X)$, note that the associated elliptic fibration $\pi$ has a unique cuspidal fiber over $[0,1]$, so every element of $P$ acts as $h: t_1 \mapsto A t_1 + B t_0$. Comparing what happens to the coefficients of $x^3, t_0x^2,$ and $t_0t_1^3x$ under such a substitution, we deduce that $A = 1$. In particular, $\Delta$ must be $h$-invariant. The coefficient of $t_0^6t_1^6$ in $h^* \Delta - \Delta$ is $(B-B^3)(a^2 - a^3c)$. Since $a^2 - a^3c \neq 0$, this shows that $B^3 = B$, hence $h$ is a multiple of $g$, so $\Aut(X) = C_6$.

If $ad = 1$, we can apply a substitution of the form $$(t_1,x) \mapsto (t_1 + \lambda t_0, x - a^{-1}t_0t_1 + a^{-1}(\lambda^3 - \lambda)t_0^2)$$ for a suitable $\lambda$ and then rescale $t_0$ and $t_1$ to obtain Equation \eqref{eq:3.1-1b}. Note that the rescaling of $t_0$ and $t_1$ does not necessarily preserve our description of $g$. To calculate $\Aut(X)$, we use that, by the same argument as in the previous paragraph, every element of $P$ acts as $h: t_1 \mapsto t_1 + Bt_0$. Analogously to Equation \eqref{eq: a_6jnon0}, the condition that an automorphism with $A = 1$ lifts to $X$ is that there exists $b_2 \in \Bbbk[t_0,t_1]_2$ such that
\begin{eqnarray*}
a_4(t_0,t_1) &=& a_4(t_0,t_1 + B t_0) - b_2t_0^2 \\
a_6(t_0,t_1) &=& b_2^3 + t_0^2b_2^2 + a_4(t_0,t_1+ B t_0)b_2 + a_6(t_0,t_1+ B t_0).
\end{eqnarray*}
The first equation yields $b_2 = B^3 t_0^2$. The only non-trivial condition in the second equation is for the coefficient of $t_0^6$, where we get
$$
0 = B^9 + aB^3 + bB.
$$
Since $b \neq 0$, this equation has exactly $9$ solutions, hence $P = C_3^2$ and $\Aut(X) = C_2 \times C_3^2$. 
\end{proof}

\begin{theorem}\label{classification}
Let $X$ be a del Pezzo surface of degree $1$ in characteristic $p = 3$ such that $\Aut(X)$ is wild. Assume that $j(F_{\eta}) = 0$. 
\begin{enumerate}
    \item If $H$ is tame, then $H= C_2$ and $X$ admits a Weierstrass equation of the form
    \beq\label{eq:3.2-1}
    y^2 + x^3 + t_0t_1(t_1^2-t_0^2)x + at_0^6 + bt_0^4t_1^2 + bt_0^2t_1^4 + bt_1^6 = 0
   \eeq
    with $a,b \neq 0$, $b + c \neq 0$. Moreover,
    \begin{enumerate}
        \item if $a \neq b$, then $P = C_3$ and $\Aut(X) = C_6$.
        \item if $a = b$, then $P = {\rm L}_2(3)$ and $\Aut(X) = \SL_2(3)$. 
    \end{enumerate}
    \item If $H$ is wild and $\pi$ has two singular fibers, then $H = C_6$ and $X$ admits a Weierstrass equation of the form
    \beq\label{eq:3.2-2}
 y^2 + x^3 - t_0^2t_1^2x + at_0^5t_1 + bt_0^4t_1^2 + ct_0^2t_1^4 + t_0t_1^5 = 0
    \eeq
    with $a \neq 0$. Moreover,
    \begin{enumerate}
        \item if $b,c \neq 0$, and $a^2c^4\ne b^4$, then $P = \{1\}$ and $\Aut(X) = C_6$.
        \item if $b,c\ne 0$ and $a^2c^4=b^4$, then $P = C_2$ and $\Aut(X) = C_2 \times C_6$ 
        \item if $b = c = 0$, then $P = C_4: C_2\cong D_8$ and $\Aut(X) = C_6 . D_8$.
     \end{enumerate}
    \item If $H$ is wild and $\pi$ has one singular fiber, then $H = C_6$ and $X$ admits a Weierstrass equation of the form
    \beq\label{eq:3.2-3}
 y^2 + x^3 - t_0^4x + at_0^5t_1 + bt_0^4t_1^2 + t_0t_1^5 = 0.
    \eeq
    Moreover,
    \begin{enumerate}
    \item if $b \neq 0$, then $P = \{1\}$ and $\Aut(X) = C_6$.
    \item if $b = 0$ and $a \neq 0$, then $P = C_2$ and $\Aut(X) = 2.D_6$.
    \item if $a = b = 0$, then $P = C_{10}$ and $\Aut(X) = C_6.C_{10}$. 
    \end{enumerate}
\end{enumerate}
\end{theorem}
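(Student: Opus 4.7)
The overall strategy follows the template of the preceding theorem (the case $j(F_\eta)\ne 0$): I apply Lemma \ref{lem:autgenericfiber} to determine $H$, normalize $a_4$ using the symmetries forced on the base $\bbP^1$ by the discriminant locus, use \cite[Appendix A, Proposition 1.2]{Silverman} to pin down $a_6$ by imposing that each candidate element of $P$ lifts to an automorphism, and finally determine $P$ by searching exhaustively for additional lifts. Since $a_2=0$, the discriminant simplifies to $\Delta=-a_4^3$, so the singular fibers of $\pi$ lie over the zeros of $a_4$, which explains the trichotomy of the statement.

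In Case~(1), $a_4$ is not a square, so $H=C_2$ by Lemma \ref{lem:autgenericfiber}(iv), and any wildness must live in $P$. By Theorem \ref{suz}, after conjugation I may assume $P$ contains $g\colon t_1\mapsto t_1+t_0$ of order~$3$. Since $g$ permutes the four zeros of $a_4$ and fixes only $[0:1]$ on $\bbP^1$, these zeros form a $3$-cycle orbit of $g$ together with $[0:1]$, forcing $a_4=t_0t_1(t_1^2-t_0^2)$ after rescaling. The lift condition of \cite[Appendix A, Proposition 1.2]{Silverman} then pins down $a_6$ as in \eqref{eq:3.2-1}. To finish, I look for further $h\in\PGL_2(\Bbbk)$ permuting the four zeros $\bbP^1(\bbF_3)$ of $a_4$; the maximal such group is $\PGL_2(\bbF_3)\cong\frakS_4$, and a direct calculation shows that its index-two subgroup $\frakA_4\cong {\rm L}_2(3)$ lifts precisely when $a=b$, giving the split between (1a) and (1b). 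The non-split extension $\Aut(X)=C_2.\frakA_4$ is identified with $\SL_2(3)$ because the Bertini involution is central of order~$2$.

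In Cases~(2) and~(3), $a_4$ is a square, so $H_0=C_3$ and $H=C_6$. With two distinct zeros of $a_4$ (Case~2) I normalize $a_4=-t_0^2t_1^2$; with one (Case~3), $a_4=-t_0^4$. Writing $a_6$ as a general binary form of degree~$6$, smoothness of $X$ (via the Jacobian criterion) forces non-vanishing of certain coefficients, and the residual substitution $x\mapsto x+b_2$ with $b_2\in\Bbbk[t_0,t_1]_2$ eliminates others, giving the normal forms \eqref{eq:3.2-2} and \eqref{eq:3.2-3}. In Case~(2), $P$ stabilizes $\{[1:0],[0:1]\}$ and hence lies in the normalizer of a split torus of $\PGL_2(\Bbbk)$; in Case~(3), $P$ stabilizes $[0:1]$ and lies in the affine group of $\bbP^1$. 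For each candidate $h\in P$ I solve the lift equation
\[
a_6(h\cdot(t_0,t_1))-a_6(t_0,t_1) \;=\; b_2^3+a_4 b_2
\]
for some quadratic $b_2$; existence of $b_2$ and coefficient comparison translate directly into the stated conditions on $(a,b,c)$ and isolate the sub-cases (a)--(c).

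The main obstacle is bookkeeping rather than conceptual: for each base symmetry I must simultaneously solve the lift equation for $b_2$, decide when a solution exists, and compute the resulting group extension of $P$ by $H$. The subtlest identifications are (2c) and (3c), where the stabilizer of the normalized discriminant in $\PGL_2(\Bbbk)$ is a wild dihedral or cyclic group of order larger than~$4$; here I must verify that the full group lifts and pin down the structure of the central extension by $C_6$, which requires inspecting the $2$- and $3$-Sylow structure of $\Aut(X)$ and matching it with the claimed extensions $C_6.D_8$ and $C_6.C_{10}$.
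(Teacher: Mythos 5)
Your overall strategy is the one the paper uses: determine $H$ from Lemma \ref{lem:autgenericfiber}, observe that $\Delta=-a_4^3$ so the trichotomy is governed by the zero set of $a_4$, normalize $a_4$ and then $a_6$ using the forced symmetry, and enumerate $P$ by solving the lift condition of \cite[Appendix A, Proposition 1.2]{Silverman}. However, the lift equation you display, $a_6(h\cdot(t_0,t_1))-a_6(t_0,t_1)=b_2^3+a_4b_2$, is not the full condition: a lift of $h$ has the form $(x,y)\mapsto(u^2x+b_2,u^3y)$ with $u^4=1$, and since $u^2=-1$ multiplies $a_6$ by $u^6=-1$, the correct condition is $b_2^3+a_4b_2+h^*(a_6)=\pm a_6$. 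Your version fixes the sign to $+$, and this is exactly what costs you the elements you yourself flag as the subtlest: in Case (3) with $b=0$ the involution $t_1\mapsto -t_1$ satisfies $B^5=-1$, so it lifts only with $u^2=-1$ (as an order-$4$ automorphism of $X$); with your equation you would conclude $P=\{1\}$ in (3b) and $P=C_5$ in (3c) rather than $C_2$ and $C_{10}$, and similarly in (2c) you would find only $\alpha^4=1$ instead of $\alpha^8=1$, i.e.\ a group strictly smaller than $C_4:C_2$. The sign must be carried along; once it is, the remaining bookkeeping and the identification of the extensions $C_6.D_8$, $C_6.C_{10}$, $2.D_6$ proceed as in the paper.

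A smaller point on Case (1): knowing that $\frakA_4\cong{\rm L}_2(3)$ lifts exactly when $a=b$ does not by itself give $P=C_3$ when $a\neq b$, nor $P={\rm L}_2(3)$ rather than $\PGL_2(\bbF_3)$ when $a=b$. You must also exclude every subgroup of $\PGL_2(\bbF_3)$ properly containing $C_3$ other than ${\rm L}_2(3)$; the paper does this by checking that an involution normalizing $(g)$ and inverting $g$ never lifts, which eliminates $3:C_2$, $3:C_4$ and $\PGL_2(\bbF_3)$ in one stroke. With that check added, your identification of $\Aut(X)=C_2.\frakA_4$ with $\SL_2(3)$ is correct, and in fact the paper realizes it directly as the natural $\SL_2(3)$-action on $(t_0,t_1)$ lifted to $X$, with the center acting as the Bertini involution.
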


\begin{proof}
Recall that $j(F_{\eta}) = 0$ implies that $X$ admits a Weierstrass equation of the form
$$
y^2 + x^3 + a_4x + a_6 = 0.
$$

\bigskip
\smallskip
\underline{Case 1}: $H$ is tame.
\smallskip

Since $\Aut(X)$ is wild and $H = C_2$ by Lemma \ref{lem:autgenericfiber}, the group $P$ is wild. 
We may assume that $P$ contains the substitution $g: t_1 \mapsto t_1 + t_0$. The ring of invariants 
$\Bbbk[t_0,t_1]^{(g)}$ is generated by $t_0$ and $t_1(t_1^2-t_0^2)$. 
Since $\Delta = -a_4^3$, $g$ preserves the set of roots of $a_4$ and since $g$ is wild, it has only one fixed point on $\bbP^1$. 
On the other hand, by Lemma \ref{lem:autgenericfiber}, $a_4$ is not a square. So, $a_4$ has 4 distinct roots, and we may assume 
that $[0,1]$ and $[1,0]$ are among them, so that
$a_4 = t_0t_1(t_1^2-t_0^2)$ and, after simplifying,
$$a_6 = at_0^6+bt_0^4t_1^2+ct_0^2t_1^4 + dt_1^6.$$ 
Since $g$ lifts to $X$, there exists $b_2 =  \alpha t_0^2 + \beta t_0t_1 + \gamma t_1^2\in \Bbbk[t_0,t_1]_2$ such that
$$
b_2^3 + a_4(t_0,t_0+t_1)b_2 + a_6(t_0,t_0+t_1) = a_6(t_0,t_1).
$$
Comparing the coefficients, we obtain
$$\alpha = \beta = \gamma = b+c+d = 0$$
and
$$
b = c = d.
$$
This yields the normal form in the theorem. Taking partial derivatives, we find that the Weierstrass surface $X$ is smooth if and only if 
$a,b \neq 0$.

It remains to determine the full group $P$. 
Since $P$ acts faithfully on the set of four roots of $\Delta = -a_4^3$, Theorem \ref{suz} 
shows that 
\beq\label{suzuki2}
P \in \{C_3, 3 : C_2, 3 : C_4,{\rm L}_2(3),\PGL_2(\mathbb{F}_3)\}.
\eeq

If $P \in \{3 : C_2, 3 : C_4, \PGL_2(\mathbb{F}_3)\}$, then $P$ contains an involution that normalizes the subgroup generated by $g$ and such that conjugation by this involution maps $g$ to $g^{-1}$. Without loss of generality, we may assume that this involution is $(t_0,t_1) \mapsto (t_0,-t_1)$. Then, an explicit computation shows that such an involution never lifts to $X$. 

If $P = {\rm L}_2(3)$, then $P$ contains the involution $\tau: (t_0,t_1) \mapsto (-t_1,t_0)$. Note that $a_4(-t_1,t_0) = a_4(t_0,t_1)$. This $\tau$ lifts to $X$ if and only if there exists $b_2 =  \alpha t_0^2 + \beta t_0t_1 + \gamma t_1^2\in \Bbbk[t_0,t_1]_2$ such that
$$
b_2^3 + a_4(t_0,t_1)b_2 + a_6(-t_1,t_0) = a_6(t_0,t_1).
$$
Comparing coefficients, we see that $b_2 = 0$, hence $\tau$ lifts to $X$ if and only if $a_6(-t_1,t_0) = a_6(t_0,t_1)$. This holds if and only if $a = b$. We also see that, under this assumption, the natural $\SL_2(3)$-action on $t_0$ and $t_1$ lifts to the surface $X$, and the center of $\SL_2(3)$ acts the Bertini involution.

\smallskip
\underline{Case 2}: $H$ is wild, $a_4$ has more than one root.
\smallskip

Since $H$ is wild, Lemma \ref{lem:autgenericfiber} shows that $a_4$ is a square. Since it has more than one root, we may assume that $a_4 = - t_0^2t_1^2$. A suitable substitution in $x$ allows us to describe $X$ by an equation of the following form:
$$
y^2 + x^3 - t_0^2t_1^2x + at_0^5t_1 + bt_0^4t_1^2 + ct_0^2t_1^4 + d t_0t_1^5 = 0.
$$
As in the previous case, computing the partial derivatives, we find that $a,d\ne 0$,
 so we can rescale $d$ to $1$.
By Lemma \ref{lem:autgenericfiber}, we have $H = C_6$. It remains to determine $P$.

After replacing elements of $g$ by scalar multiples, we may assume that $g^*(a_4) = a_4$ for every $g \in P$. Note that such a $g$ either acts diagonally or anti-diagonally, hence $g^*(a_6)$ does not contain the monomials $t_0^6,t_0^3t_1^3$, and $t_1^6$. The condition that $g$ can be lifted to an automorphism of $X$ is that there exists 
$b_2 = s_1t_0^2+s_2t_0t_1+s_3t_1^2$ such that
\beq\label{identity1}
b_2^3+ a_4b_2 +g^*(a_6) = \mp a_6.
\eeq
Comparing the coefficient of $t_0^6,t_0^3t_1^3,$ and $t_1^6$, we find that $b_2 = st_0t_1$ with $s(s^2-1) = 0$.

Now, assume first that $g$ acts as $(t_0,t_1) \mapsto (\alpha t_0,\alpha^{-1} t_1)$ with $\alpha^2 \neq 1$. Comparing the coefficients in Equation \eqref{identity1}, we obtain
$$
(\alpha^4 \pm 1)a = (\alpha^2 \pm 1)b = (\alpha^{-2} \pm 1) c = \alpha^{-4} \pm 1 = 0,
$$
where the choice of sign is compatible with Equation \eqref{identity1}. If $b \neq 0$ or $c \neq 0$, then $\alpha^2 = -1$ and the sign on the right-hand side of Equation \eqref{identity1} is a minus, so that $\alpha^{-4} + 1 = 0$ cannot hold. Hence, $g$ can exist only if $b = c = 0$.  In this case, the unique condition for the liftability of $g$ is $\alpha^8 = 1$, so the group of such $g$ determines a cyclic subgroup of order $4$ in $\PGL_2(\Bbbk)$. Moreover, note that if $g$ has order $4$, then every lift of $g$ to $X$ with $b_2 = 0$ has order $8$.

Next, assume that $g$ acts as $(t_0,t_1)\mapsto (\alpha t_1,\alpha^{-1} t_0)$. Equation \eqref{identity1} becomes
$$
(a \alpha^4 \pm 1) = (b \alpha^2 \pm c) = (c \alpha^{-2} \pm b) = (\alpha^{-4} \pm a ) = 0.
$$
Thus, $a^2 = \alpha^{-8}$. If $b \neq 0$, then $c \neq 0$. Moreover, these equations admit a solution if and only if $a^2c^4 = b^4$. Note that in this case the involution $g$ lifts to an involution of $X$. 

In summary, if $(b,c) \neq (0,0)$ and $a^2c^4 \neq b^4$, then $P$ is trivial, so $\Aut(X) = H = C_6$. 
If $(b,c) \neq (0,0)$ and $a^2c^4 = b^4$, then $P = C_2$ and $H = C_6$. It is obvious that a generator of 
$P$ commutes with a generator of $H$, so  $\Aut(X) = H \times P = C_6 \times C_2$.
If $(b,c) = (0,0)$, then $\Aut(X)$ is a non-split extension of $P \cong C_4 : C_2$ by $H \cong C_6$. 

\smallskip
\underline{Case 3}: $H$ is wild, $a_4$ has exactly one root.
\smallskip

Since $H$ is wild, Lemma \ref{lem:autgenericfiber} shows that $a_4$ is a square. Since it has exactly one root, we may assume that $a_4 = - t_0^4$. Since $X$ is smooth, we must have $t_0^2 \nmid a_6$. A suitable substitution in $x$ allows us to describe $X$ via an equation of the following form:
$$
y^2 + x^3  - t_0^4 x + at_0^5t_1 + bt_0^4t_1^2 + ct_0^2t_1^4 + t_0t_1^5.
$$
Taking partial derivatives, we see that $X$ is always smooth. Using a substitution of the form $t_1 \mapsto t_1 + \lambda t_0, x \mapsto x + \alpha t_0^2 + \beta t_0t_1$, we may assume additionally that $c = 0$.
By Lemma \ref{lem:autgenericfiber}, we have $H = C_6$. It remains to determine $P$.

Since $P$ preserves the unique root of $a_4$, we may assume that $P$ consists of transformations 
of the form $(t_0,t_1)\mapsto (t_0, A t_0 + B t_1)$. In order for this transformation to lift to $X$, there must exist
$b_2 = \alpha t_0^2+\beta t_0t_1+\gamma t_1^2$ such that
 \beq \label{identity2}
 b_2^3 + a_4b_2 + a_6(t_0,A t_0 + B t_1) = \pm a_6(t_0,t_1).
\eeq
Comparing the coefficients of $t_0^2t_1^4$, we see that $A = 0$. Comparing the coefficients of $t_0^6, t_0^3t_1^3$, and $t_1^6$, we find that $\beta = \gamma = 0$ and $\alpha^3 - \alpha = 0$.
Then, Condition \eqref{identity2} becomes
$$
a(B \mp 1) = b(B^2 \mp 1) = B^5 \mp 1 = 0.
$$

If $B = 1$, then the transformation is the identity. If $B = -1$, then $b = 0$ and, conversely, if $b = 0$, the transformation with $B = -1$ lifts to $X$ (as an automorphism of order $4$).
If $B$ is a primitive $5$-th root of unity, then $a = b = 0$ and again, the transformation lifts to $X$ under these conditions.
\end{proof}

\subsection{Tame automorphism groups} \label{sec: tamedeg1}
We showed in Section \ref{sec: tamedeg2} that all automorphism groups of del Pezzo surfaces 
of degree $2$ in characteristic $0$ appear as the tame automorphism groups of 
del Pezzo surfaces of degree $2$ in positive characteristic.  It turns out that this is not true anymore for del Pezzo surfaces 
of degree $1$. If $p = 3,5$, some of these tame groups appear only as proper subgroups of the full groups of automorphisms. 
If $p = 3$, the reason is that in their 
equations the coefficient $a_2$ is equal to zero and hence the $j$-invariant of the general member of the 
elliptic pencil vanishes. This implies that the group $H_0$ becomes a group of order $3$ and the full group of 
automorphisms is larger.

We go through the list of automorphism groups of del Pezzo surfaces of degree $1$ in characteristic $0$ given in \cite[Table 8.14]{CAG}
and find those of them which are tame in some characteristic $p > 0$. 
\begin{itemize}
    \item $3 \times (\SL_2(3) : 2)$: This group is tame if $p \neq 3$. 
    By Corollary \ref{cor: possiblegroups}, it is maximal among tame automorphism groups of del Pezzo 
    surfaces of degree $1$. For a suitable choice of coordinates, the unique surface with an action of this group is given by the equation
    $$
    y^2 + x^3 + t_0t_1(t_0^4 - t_1^4) = 0.
    $$
    If $p = 5$, this equation defines the surface of Theorem \ref{p=5} (ii), hence its 
    automorphism group is larger and thus the tame group $3 \times (\SL_2(3) : 2)$ does not occur.
    \item $3 \times (2.D_{12})$: This group is tame if $p \neq 3$ and it is maximal among tame automorphism groups of del Pezzo surfaces of degree $1$. The unique surface with an action of this group is given by the equation
    $$
    y^2 + x^3 + t_0^6 + t_1^6 = 0.
    $$   
    If $p = 5$, this surface is projectively equivalent to the one of the previous case, so, again, 
    its automorphism group is larger. Hence, the group $3 \times (2.D_{12})$ does not occur if $p = 5$.
    \item $6 \times D_6$: This group is tame if $p \neq 3$. 
    The equation given in \cite[Table 8.14]{CAG} defines a one-dimensional 
    family of smooth del Pezzo surfaces with an action of this group if $p\ne 3$. Since the 
    del Pezzo surfaces with a larger automorphism group 
    are unique by \cite[Table 8.14]{CAG} and Theorem \ref{p=5}, a generic member of this family 
   has automorphism group isomorphic to $6 \times D_6$.
    \item $30$: This group is tame if $p \neq 3,5$ and it is maximal among all possible automorphism groups. 
    The equation given in \cite[Table 8.14]{CAG} defines a surface with an action of this group in 
    characteristic $p \neq 3,5$.
    \item $\SL_2(3),2.D_{12},$ and $2 \times 12$: These groups are tame if $p \neq 3$ and the equations given in \cite[Table 8.14]{CAG} with general parameters define surfaces with this automorphism group if $p \neq 3$.
    \item $3 \times D_8$: This group is tame if $p \neq 3$. It appears in the list \cite[Page 519,520]{DI}, but it is missing in \cite[Table 8]{DI} and \cite[Table 8.14]{CAG}. It is realized by the surfaces with Weierstrass equation
    $$
    y^2 + x^3 + t_0t_1(t_0^4 + at_0^2t_1^2 + t_1^4)
    $$
    for a general choice of $a$. This group occurs in every characteristic $p \neq 3$. We denote it by M in Table \ref{tbl:autodp1}.
    \item $20$: This group is tame if $p \neq 5$. If $p \neq 3,5$, the equation given in 
    \cite[Table 8.14]{CAG} defines a surface with this automorphism group, since it 
    is maximal among tame automorphism groups. 

    If $p = 3$, this group does not occur. Indeed, its generator must act by the formula 
    $(t_0,t_1,x,y) \mapsto (t_0,\zeta_{10}t_1,-x,\zeta_4 y)$ as in the case of characteristic $0$, where $\zeta_n$ is a primitive $n$-th root of unity. 
    This easily implies that its equation $y^2+x^3+a_2x^2+a_4x+a_6 = 0$ in characteristic $3$ has the 
    coefficient $a_2 = 0$, hence the $j$-invariant $j(F_\eta)$ of the generic fiber of the elliptic fibration 
    vanishes. But this increases the size of the subgroup $H$ and the automorphism group becomes larger. It follows from 
    Theorem \ref{classification} that the surface is from Case (3) (c) and the automorphism group is $C_6.C_{10}$. 
    \item $D_{16}$: This group is tame. If $p \neq 3$, the equation given in \cite[Table 8.14]{CAG} defines a $1$-dimensional family of surfaces on which this group acts. Since all surfaces whose automorphism group strictly contains $D_{16}$ are unique, the generic member $X$ of this family has $\Aut(X) \cong D_{16}$.

    The same argument as in the previous case shows that the equation of the surface $X$ in characteristic $3$ 
    whose automorphism 
    group contains $D_{16}$ has coefficient $a_2$ equal to zero. This increases the size of the subgroup $H$ of 
    $\Aut(X)$. The surface is from Case 2 (c) in Theorem \ref{classification}. Its group of automorphism group is $C_6.D_8$.
     \item $D_{12}$ and $2 \times 6$: These groups are tame if $p \neq 3$ and the equations given in \cite[Table 8.14]{CAG} with general parameters define surfaces with this automorphism group if $p \neq 3$.

    \item $10$: This group is tame if $p \neq 5$. If $p \neq 3,5$, the equation given in \cite[Table 8.14]{CAG} defines a $1$-dimensional family of surfaces on which this group acts. Since the surfaces whose automorphism groups strictly contain $C_{10}$ are unique, the generic member $X$ of this family realizes the group $C_{10}$.

    If $p = 3$, the equation
    $$
    y^2 + x^3 + t_0^2x^2 + a t_0(t_1^5 + t_0^5) = 0
    $$
    defines a one-dimensional family of surfaces with an action of $C_{10}$ given by $(t_0,t_1,x,y) \mapsto (t_0,\zeta_5 t_1,x,-y)$.

    \item $Q_8, 2 \times 4,$ and $D_8$: These groups are tame and the equations given in \cite[Table 8.14]{CAG} with general parameters define surfaces with these automorphism groups.
    \item $6$: This group is tame if $p \neq 3$ and the equations given in \cite[Table 8.14]{CAG} with general parameters define surfaces with this automorphism group.
    \item $4$ and $2^2$: These groups are tame. The equations given in \cite[Table 8.14]{CAG} define two at least $4$-dimensional families of surfaces with an action of this group. Since the surfaces with larger automorphism group depend on less than three parameters, even in the wild case, a general member of these families will have the desired automorphism group.
    \item $2$: This is the general case that occurs in every characteristic.
\end{itemize}

\subsection{Wild automorphism groups}
Here, we summarize the classification of wild groups of automorphisms of del Pezzo surfaces of degree $1$ by collecting the results of the previous sections. We use the notation of \cite[Table 8.14]{CAG} for wild groups in characteristic $p$ that also occur in characteristic $0$. Let us explain our notation for the remaining cases:

The surfaces in Equation \eqref{eqndp1D10} are reductions modulo $5$ of the surfaces of Type XIII in \cite[Table 8.14]{CAG} after a suitable change of coordinates, hence we call them Type XIII. Similarly, the surface given by Equation \eqref{eqndp1660} is the reduction modulo $5$ of the surfaces of Type I, II, and IV in \cite[Table 8.14]{CAG}.

As for characteristic $p = 3$, consider first the surfaces given by Equation \eqref{eq:3.1-1b}. Recall that they have automorphism group $2 \times 3^2$. Since all the wild automorphisms of order $3$ in this group fix a cuspidal curve on $X$, they are either of conjugacy class $4A_2$ or they arise from a del Pezzo surface $Y$ of degree $2$ given by Equation \eqref{eqndp2C3} and then they are of conjugacy class $3A_2$. Now, comparing eigenvalues of the commuting automorphisms of order $3$ on $\sfE_8$, one checks that the existence of an automorphism of class $4A_2$ would force some other automorphism to be of type $A_2$ or $2A_2$. Hence, all elements in $C_3^2$ are of conjugacy class $3A_2$. The surfaces in Equation \eqref{eq:3.1-1} together with their wild automorphisms are generalizations of Equation \eqref{eq:3.1-1b} and the surfaces in Equation \eqref{eq:3.2-1} are specializations of Equation \eqref{eq:3.1-1}, so all the wild automorphisms in these families are of class $3A_2$.
We call the surfaces given by Equations \eqref{eq:3.1-1} and \eqref{eq:3.2-1} with $a \neq b$ Type XVIII, as in \cite[Table 8,14]{CAG}. Equation \eqref{eq:3.1-1b} has no analogue in characteristic $0$, but it is a specialization of Type XVIII, so we call it XVIII'.
A general equation of Type V in \cite[Table 8.14]{CAG} is smooth in characteristic $3$, hence it defines a del Pezzo surface with an action of $\SL_2(3)$ in characteristic $3$. Therefore, we call the surfaces given by Equation \eqref{eq:3.2-1} with $a = b$ Type V as well. 

The wild automorphisms of order $3$ of the surfaces defined by Equations \eqref{eq:3.2-2} fix the base of the elliptic fibration and act with a unique fixed point on a general fiber, hence they do not leave invariant any $(-1)$-curve on $X$. Therefore, they must be of conjugacy class $4A_2$. Hence, the general surface defined by these equations is the characteristic $3$ analogue of Type XVII of \cite[Table 8.14]{CAG}. The subfamily with $a^2c^4 - b^4 = 0$ has automorphism group $2 \times 6$ with $C_3$ acting trivially on the base of the elliptic fibration, so we call them Type XI.
The surfaces given by Equation \eqref{eq:3.2-2} with $b = c = 0$ are reductions modulo $3$ of the surfaces of Type IX in \cite[Table 8.14]{CAG} and, after a suitable transformation, also of those of Type M.

The general surface given by Equation \eqref{eq:3.2-3} admits a wild automorphism of conjugacy class $4A_2$, by the same argument as in the previous paragraph, so we denote these surfaces by XVII as well. The subfamily with $b = 0$ and $a \neq 0$ has no analogue in characteristic $0$, but they lift together with the action of $C_4$, so we denote them by XIX'. The surface given by Equation \eqref{eq:3.2-3} with $a = b = 0$ lifts to characteristic $0$ together with the action of $C_{20}$, hence it is a reduction modulo $3$ of the surface of Type VIII in \cite[Table 8.14]{CAG}. After a suitable change of coordinates, Type IV \cite[Table 8.14]{CAG} reduces to this surface as well, hence we name it Type IV/VIII

\begin{table}[h!]
    \centering
    \begin{tabular}{|c|c|c|c|c|c|c|c|} \hline
     $p$ &Type&$\Aut(Q)$  & Order &Equation& \# Parameters & Conditions\\ \hline \hline 
     $5$ &XIII&$2.D_{10}$& $20$ &\eqref{eqndp1D10}&$1$&$c\ne 0$\\ \hline
     $5$ &I/II/IV &$6.\PGL_2(5)$ & $720$ & \eqref{eqndp1660} & $0$ & \\ \hline \hline
$3$&XVIII&$C_6$ & $6$ &\eqref{eq:3.1-1}& $3$ & \\ \hline
$3$&XVIII&$C_6$ & $6$ &\eqref{eq:3.2-1}& $2$ & $a \neq b$ \\ \hline
$3$&XVIII'&$2\times 3^2$ & $18$ &\eqref{eq:3.1-1b}& $2$ &  \\ \hline
$3$&XVII&$C_6$ & $6$ &\eqref{eq:3.2-2}& $3$ & $b,c,a^2c^4-b^4\ne 0$ \\ \hline
$3$&XVII&$C_6$& $6$ &\eqref{eq:3.2-3}& $2$ & $b \neq 0$ \\ \hline
$3$&XI&$C_2\times C_6$& $12$ &\eqref{eq:3.2-2}& $2$ & $b,c \ne 0, a^2c^4-b^4 = 0$ \\ \hline
$3$&XIX'&$2.D_6$& $12$ &\eqref{eq:3.2-3}& $1$ & $b = 0, a \neq 0$ \\ \hline
$3$&V&$\SL_2(3)$ & $24$ &\eqref{eq:3.2-1}& $1$ & $a = b = c$ \\ \hline
$3$&IX/M&$C_6.D_8$& $48$ &\eqref{eq:3.2-2}& $1$ & $b=c= 0$ \\ \hline
$3$&IV/VIII&$C_6.C_{10}$& $60$ &\eqref{eq:3.2-3}& $0$ & $a = b = 0$ \\ \hline
         \end{tabular}
    \caption{Wild automorphism groups of del Pezzo surfaces of degree 1}
\end{table}

 \subsection{Conjugacy classes}
As in the case of degree $2$, the conjugacy classes of a tame automorphism $g$ of a del Pezzo surface can be determined using the Lefschetz fixed point formula. The conjugacy classes of wild automorphisms can be determined using the description in the previous section of the corresponding del Pezzo surfaces as reductions modulo $p$ of certain del Pezzo surfaces in characteristic $0$, where the automorphism is tame.
For the surfaces of Type XIX' and XVIII' that have no analogue in characteristic $0$, we determined the conjugacy classes of the wild automorphisms of order $3$ in the previous section. The remaining automorphisms of these surface are obtained by composing with the Bertini involution, and the conjugacy classes of these compositions can be easily determined using Table \ref{tbl:carter}.

We give the classification of all conjugacy classes that can occur in Table \ref{tbl:autodp1}. In that table, we give the name of the relevant family of del Pezzo surfaces in the first column, following \cite[Table 8.14]{CAG}. In the second column, we note the characteristics in which the family occurs. The third and fourth columns give the group $\Aut(X)$ and its order. The remaining columns give the number of elements of a given Carter conjugacy class in $\Aut(X)$.

\begin{remark}
As a concluding remark, we note that, while there are no conjugacy classes in the Weyl groups $W(\sfE_N)$ that are realized in characteristic $p$ but not in characteristic $0$, the tables in the Appendix show that there are certain conjugacy classes that occur in characteristic $0$ but do not occur in characteristic $p$ if $p \in \{3,5,7\}$.

If $p = 7$, the conjugacy classes $A_6$ and $E_7(a_1)$ do not occur, because the Klein quartic becomes singular in characteristic $7$.

If $p = 5$, the conjugacy classes $A_4$ and $E_8(a_2)$ do not occur. In characteristic $0$, the Clebsch cubic surface is the unique del Pezzo surface realizing the conjugacy class $A_4$, and this surface becomes singular in characteristic $5$. The conjugacy class of type $E_8(a_2)$ can only occur on del Pezzo surfaces of degree $1$, where it acts as an automorphism of order $10$ on the base of the elliptic pencil. Since $\PGL_2(\Bbbk)$ contains no element of order $10$ if $p = 5$, this explains why $E_8(a_2)$ does not occur.

If $p = 3$, the conjugacy classes $A_2,A_2 + A_1,A_2 + 2A_1,A_5 + A_2 + A_1,D_4,D_4 + A_2,2D_4,$ $D_5(a_1),$ $E_6 + A_1,E_6(a_1),E_7, $ and $E_8(a_1)$ do not occur in any degree and there are a couple of conjugacy classes that are not realized in all the degrees that they are realized in in characteristic $0$. This is a consequence of several phenomena: Firstly, as above, certain special del Pezzo surfaces in characteristic $0$ become singular in characteristic $3$. Secondly, the group $\PGL_2(\Bbbk)$ contains no elements of order $np$ with $(p,n) = 1$ and $n \geq 2$. Thirdly, there are no wild automorphisms of order $p^2$ on del Pezzo surfaces in any degree by \cite[Theorem 8]{Dolgachev1}.
And finally, the fixed locus of an automorphism $g$ of order $3$ on a del Pezzo surface $X$ in characteristic $3$ tends to be smaller and in a more special position than in characteristic $0$, making it harder to produce blow-ups of $X$ to which $g$ lifts and which are still del Pezzo surfaces.

\end{remark}

 \newpage
\section*{Appendix}
\label{sec:appendix}

\begin{table}[h]
\begin{center}
\begin{tabular}{| l |r| r | r }
\hline
Graph&Order &Characteristic polynomial\\ \hline \hline
$A_k$&$k+1$&$t^k+t^{k-1}+\cdots+1$\\ \hline
$D_k$&$2k-2$&$(t^{k-1}+1)(t+1)$\\ \hline
$D_k(a_1)$&l.c.m$(2k-4,4)$&$(t^{k-2}+1)(t^2+1)$\\ \hline
$D_k(a_2)$&l.c.m$(2k-6,6)$&$(t^{k-3}+1)(t^3+1)$\\ \hline
\vdots&\vdots&\vdots\\ \hline
$D_k(a_{\frac{k}{2}-1})$&even\  $k$ &$(t^{\frac{k}{2}}+1)^2$\\ \hline
$E_6$&12&$(t^4-t^2+1)(t^2+t+1)$\\ \hline
$E_6(a_1)$&9&$t^6+t^3+1$\\ \hline
$E_6(a_2)$&6&$(t^2-t+1)^2(t^2+t+1)$\\ \hline
$E_7$&18&$(t^6-t^3+1)(t+1)$\\ \hline
$E_7(a_1)$&14&$t^7+1$\\ \hline
$E_7(a_2)$&12&$(t^4-t^2+1)(t^3+1)$\\ \hline
$E_7(a_3)$&30&$(t^5+1)(t^2-t+1)$\\ \hline
$E_7(a_4)$&6&$(t^2-t+1)^2(t^3+1)$\\ \hline
$E_8$&30&$t^8+t^7-t^5-t^4-t^3+t+1$\\ \hline
$E_8(a_1)$&24&$t^8-t^4+1$\\ \hline
$E_8(a_2)$&20&$t^8-t^6+t^4-t^2+1$\\ \hline
$E_8(a_3)$&12&$(t^4-t^2+1)^2$\\ \hline
$E_8(a_4)$&18&$(t^6-t^3+1)(t^2-t+1)$\\ \hline
$E_8(a_5)$&15&$t^8-t^7+t^5-t^4+t^3-t+1$\\ \hline
$E_8(a_6)$&10&$(t^4-t^3+t^2-t+1)^2$\\ \hline
$E_8(a_7)$&12&$(t^4-t^2+1)(t^2-t+1)^2$\\ \hline
$E_8(a_8)$&6&$(t^2-t+1)^4$\\ \hline
\end{tabular}
\end{center}
\caption{Carter graphs and characteristic polynomials.}\label{tab1}
\label{tbl:carter}
\end{table}

\vskip5pt
\begingroup
\renewcommand*{\arraystretch}{1.3}
\begin{table}[ht]
\centering
\scalebox{0.7}{
$
\begin{array}{|c|c|c|c|}\hline
\textrm{Order}&\textrm{Jordan form of the induced automorphism of $\bbP^2$}&\textrm{Conjugacy class}&\textrm{Trace}\\ \hline \hline
1/2 & 

\begin{pmatrix}
1 & 0 & 0 \\
0 & 1 & 0 \\
0 & 0 & 1
\end{pmatrix}
&
{\rm id} / 7A_1 & \pm 7 \\
 \hline 
 2/2 &
 
\begin{pmatrix}
1 & 0 & 0 \\
0 & 1 & 0 \\
0 & 0 & -1
\end{pmatrix}

&
4A_1 / 3A_1 & \pm 1 \\
 \hline 
  3/6 &
\begin{pmatrix}
1 & 0 & 0 \\
0 & 1 & 0 \\
0 & 0 & \zeta_3
\end{pmatrix}
\text{ if $p \neq 3$ and }
\begin{pmatrix}
1 & 1 & 0 \\
0 & 1 & 1 \\
0 & 0 & 1
\end{pmatrix}
\text{ if $p = 3$}
&
2A_2 / D_6(a_2) + A_1 & \pm 1 \\
 \hline 
  3/6 &
\begin{pmatrix}
1 & 0 & 0 \\
0 & \zeta_3 & 0 \\
0 & 0 & \zeta_3^2
\end{pmatrix}
\text{ if $p \neq 3$ and }
\begin{pmatrix}
1 & 1 & 0 \\
0 & 1 & 0 \\
0 & 0 & 1
\end{pmatrix}
\text{ if $p = 3$}
&
3A_2 / E_7(a_4) & \pm 2 \\
 \hline 
   4/4 &
\begin{pmatrix}
1 & 0 & 0 \\
0 & \zeta_4 & 0 \\
0 & 0 & -1
\end{pmatrix}
&
2A_3/D_4(a_1) + A_1 & \pm 1 \\  \hline 
   4/4 &
\begin{pmatrix}
1 & 0 & 0 \\
0 & 1 & 0 \\
0 & 0 & \zeta_4
\end{pmatrix}
&
2A_3 + A_1/D_4(a_1)& \pm 3 \\
 \hline 
    6/6 &
\begin{pmatrix}
1 & 0 & 0 \\
0 & \zeta_3 & 0 \\
0 & 0 & -1
\end{pmatrix}
\text{ if $p \neq 3$ and }
\begin{pmatrix}
-1 & 1 & 0 \\
0 & -1 & 1 \\
0 & 0 & -1
\end{pmatrix}
\text{ if $p = 3$}
&
A_5 + A_2/E_6(a_2)& \pm 2 \\
 \hline 
     7/14 &
\begin{pmatrix}
1 & 0 & 0 \\
0 & \zeta_7 & 0 \\
0 & 0 & \zeta_7^4
\end{pmatrix} 
&
A_6/E_7(a_1)& 0 \\
 \hline 
     8/8 &
\begin{pmatrix}
1 & 0 & 0 \\
0 & \zeta_8^3 & 0 \\
0 & 0 & \zeta_8^5
\end{pmatrix} 
&
D_5/D_5+A_1& \pm 1 \\
 \hline 
      9/18 &
\begin{pmatrix}
1 & 0 & 0 \\
0 & \zeta_9^2 & 0 \\
0 & 0 & \zeta_9^3
\end{pmatrix}
&
E_6(a_1)/E_7& \pm 1 \\
 \hline 
      12/12 &
\begin{pmatrix}
1 & 0 & 0 \\
0 & \zeta_{12}^3 & 0 \\
0 & 0 & \zeta_{12}^4
\end{pmatrix}
\text{ if $p \neq 3$ and }
\begin{pmatrix}
\zeta_4 & 1 & 0 \\
0 & \zeta_4 & 1 \\
0 & 0 & \zeta_4
\end{pmatrix}
\text{ if $p = 3$}
&
E_6/E_7(a_2)& 0 \\
 \hline 
\end{array} $
} 
  \caption{Conjugacy classes of elements of $W(\sfE_7)$}\label{table: carter1}
\end{table}
\endgroup

\vskip5pt
\begin{landscape}
\begin{table}[htbp]
\centering
\renewcommand{\arraystretch}{1.3}
\scalebox{0.8}{%
\begin{tabular}{|c|c|cc|ccccccccccc|}
\hline
Name & $\operatorname{char}$ & $\Aut(X)$ & Order & ${\rm id}$ &  $2A_1$ & $4A_1$ & $A_2$& $A_2 + 2A_1$ &$A_3$  & $A_3 + A_1$ & $A_4$ & $D_4$ & $D_4(a_1)$ &$D_5$ \\
\hline \hline
$(\phi,\phi)$ & $\neq 5$ & $2^4 : D_{10}$ & $160$ &  $1$   & $30$ & $5$ & &&  & $40$ & $64$ & & $20$&  \\
& $5$ & $2^4 : (5 : 4)$ & $320$ &  $1$   & $30$ & $5$ & && $80$ & $40$ & $64$ & & $20$& $80$\\ \hline
$(\zeta_3,\zeta_3)$ & $\neq 3$ & $2^4 : \frakS_3$ & $96$ & $1$   & $22$ & $5$ & $8$  &$8$ & & $24$  & & $16$ & $12$& \\ \hline 
$(i,i)$ & $\neq 5$ & $2^4 : 4$ & $64$ & $1$   & $14$ & $5$  & && $16$ & $8$ & & & $4$& $16$ \\
  &$ 5$ & \multicolumn{2}{c|}{\cellcolor{gray!20}(same as $(\phi,\phi)$)} & \multicolumn{11}{c|}{\cellcolor{gray!20}} \\ \hline
$(a,a)$ & any & $2^4 : 2$ & $32$ &  $1$   & $14$ & $5$ & & & & $8$  & & & $4$&  \\ \hline
general & any & $2^4$ & $16$ & $1$  & $10$   & $5$ &  & & & & & & & \\ \hline 
\end{tabular}
}
\caption{Automorphism groups of quartic del Pezzo surfaces}
\label{tbl:autodp4}
\end{table}

\begin{table}[htbp]
\centering
\renewcommand{\arraystretch}{1.3}
\scalebox{0.8}{%
\begin{tabular}{|c|c|cc|cccccccccccccccc|}
\hline
Name & $\operatorname{char}$ & $\Aut(X)$ & Order & ${\rm id}$  & $2A_1$ & $4A_1$ & $A_2$
& $A_2 + 2A_1$ &$2A_2$ & $3A_2$ & $A_3 + A_1$ & $A_4$ & $A_5 + A_1$ & $D_4$ & $D_4(a_1)$  & $D_5$ & $E_6 $ & $E_6(a_1)$ & $E_6(a_2)$ \\ \hline
\hline
I / 3C & $\ne 3$ & $3^3 : \frakS_4$ & $648$ & $1$ & $27$ &$18$  & $6 $ & $54 $& $84$ & $8$ & $162 $&  & $36$ & $36 $&  &  &   &$144 $ & $72$ \\
\hline
II / 5A & $\ne 5$ & $\frakS_5$ & $120$ & $1$ & $15$ & $10 $& & & $20$ &  & $30$ & $24$ &$20$ &  &  &  &    &  & \\ \hline
III / 12A & $\ne 3$ & $\calH_3(3) : 4$ & $108 $& $1$ &  & $9$ &  & & $24$ & $2$ &  &  &  &  & $18$ &  &  $36$  &  & $18$ \\
 & $3$ & $\calH_3(3) : 8$ & $216 $& $1$ &  & $9$ &  && $24$ &$ 2$ &  &   &  &  & $18$ &$ 108$ &   $36$&  & $18$ \\ \hline
IV /  3A & any & $\calH_3(3) : 2$ & $54$ & $1$ &  & $9$  &   & & $24$ & $2$&  &  &  &  & &  &     &  & $18$ \\
\hline
V / 4B & any & $\frakS_4$ & $24$ & $1$ & $3$ &$ 6$ &  & & $8$ &  & $6$ &  &  &  &  &  &    &  & \\ \hline
VI / 6E & any & $\frakS_3 \times \frakS_2$ & $12$ & $1$ & $3$ & $4$ &  & & $2$ &  &  &  & $2$ &  &  &  &    &  & \\ \hline
VII / 8A & $\ne 3$ & $8$ & $8$ & $1$ &  & $1$ &  &  & & &  &   &  &  & $2$ & $4$ &  &  & \\
  &$ 3$ & \multicolumn{2}{c|}{\cellcolor{gray!20}(same as 12A)} & \multicolumn{16}{c|}{\cellcolor{gray!20}} \\ \hline
 VIII / 3D & any & $\frakS_3$ & $6$ & $1$ &  & $3$ &  & & $2$ &  &  &  &  &  &  &  &     &  & \\
\hline
IX / 4A & any & $4$ & $4$ & $1$ &  & $1$ &  &  & & &  &  &  &  & $2$ &  &     &  & \\
\hline
X / 2B & any & $2^2$ & $4$ &$ 1$ & $1 $& $2 $&  &  &  &  &  &  &  &  &  &  &  &  & \\
\hline
XI / 2A & any & $2$ &$ 2 $& $1$ &  & $1$ &  &  &  &  &  &  &  &  &  &  &  &  & \\
\hline
XII / 1A & any & $1$ &$ 1 $&$ 1$ &  &  &  &  &  &  &  &  &  &  &  &  &  &  & \\
\hline
\end{tabular}
}
\caption{Automorphism groups of cubic del Pezzo surfaces}
\label{tbl:cubics}
\end{table}
\end{landscape}

\newgeometry{left=3cm,right=3.4cm,bottom=0.5cm,top=1.6cm}

\begin{landscape}
\begin{table}[htbp]
\centering
\renewcommand{\arraystretch}{1.3}
\scalebox{0.68}{%
\begin{tabular}{|c|c|cc|cccccccccccccccccccccc|}
\hline
Name & $\operatorname{char}$ & $\Aut(X)$ & Order & ${\rm id}$  & $3A_1$ & $4A_1$ & $7A_1$ & $2A_2$ & $3A_2$ &  $2A_3$ & $2A_3 + A_1$ & $A_5 + A_2$ & $A_6$ & $D_4(a_1)$ & $D_4(a_1) + A_1$ & $D_5$ & $D_5 + A_1$ & $D_6(a_2) + A_1$ & $E_6$ &$E_6(a_1)$ &$E_6(a_2)$ & $E_7$ & $E_7(a_1)$ & $E_7(a_2)$ &$E_7(a_4)$ \\
\hline \hline
I & $\ne 3,7$ & $2 \times {\rm L}_2(7)$ & $336$ & $1$ &$ 21 $& $21 $& $1$ & $56 $ & & $42$ &  && $48$ &  &  $42$ & && $56 $& &&  && $48$ &  &  \\ 
& $3$ & $2 \times \PSU_3(9)$ & $12096$ & $1 $& $63 $& $63$ & $1$ & $672$& $56$ & $378$ & $126$ & $504$ & $1728 $& $126$ & $378$ & $1512 $&$ 1512 $& $672$ & $1008$ &  & $504$ & & $1728$ & $1008$ & $56$ \\ \hline 
 II & $\ne 3$ & $2 \times 4^2:\frakS_3$ & $192$ &$ 1$ & $15 $& $15$ & $1$ &$ 32$  & & $18$ &$ 6$ & & & $6$ &  $18 $& $24 $& $24$ &$ 32$ & && & & &  &  \\ 
 & $3$ & \multicolumn{2}{c|}{\cellcolor{gray!20}(same as ${\rm I}$)} & \multicolumn{22}{c|}{\cellcolor{gray!20}} \\ \hline 
 III & $\ne 3$ & $2 \times 4.\mathfrak{A}_4$ &$ 96$ & $1$ & $7$ &$ 7$ & $1$ & &$ 8$& $6$ & $2 $& $8$ & & $2$ & $6$ & && & $16 $& &$ 8$ & & & $16$ & 8 \\
 & $3$ & \multicolumn{2}{c|}{\cellcolor{gray!20}(same as ${\rm I}$)} & \multicolumn{22}{c|}{\cellcolor{gray!20}} \\ \hline 
IV & any & $2 \times \frakS_4$ &$ 48$ &$ 1 $& $9 $& $9 $& $1 $& $8$  & & $6$&  && &  &  $6 $& && $8$ & &&  && &  &  \\ \hline 
V & any & $2 \times 4.2^2$ &$ 32 $& $ 1 $& $7$ & $7$ & $1$ & &  & $6$ & $2 $& & & $2 $& $6$ &&& & & && & & &\\ \hline 
VI & $\ne 3$ & $18$ & $18$ & $1$ & & & $1$ & & $2 $& & & & & & && &&& $6$ & & $6$&  &&  $2$ \\ \hline 
VII & any & $2 \times D_8$ & $16$ & $1$ &$ 5$ &$ 5$ & $1$ & &  & $2$ & & & && $2$ & &&&& & & & & &\\ \hline 
VIII & $\ne 3$ & $2 \times 6$ &$ 12 $& $1$ & $1 $&$ 1 $& $1 $& & $2 $& & & $2$ & && & & && && $ 2$ && &&  $2$ \\
& $3$ & $2 \times (\calH_3(3) : 2)$ & $108$ & $1$ & $9 $& $9 $& $1$ & $24$ & $2$ & & &$ 18$ & & & & & & $24 $& & &  $18$ & & & &  $2 $\\ \hline 
IX & any & $2 \times \frakS_3$ & $12$ & $1$ & $3$ & $3$ & $1 $& $2$ & & & & & &&& & & $2$ && && & &&\\ \hline 
X & any & $2^3$ & $8$&$ 1$ &$ 3$ &$ 3$ &$ 1$ & & & & & & &&&&& & & & && &&\\ \hline 
XI & any & $6$ & $6$ & $1$ & & & $1$ & & $2 $ & &&&  &&& & && & & & & &&$ 2 $\\ \hline 
XII & any & $2^2$ & $4$ & $1 $&$ 1$ &$ 1 $&$ 1$ & & & &&&&& & & && & & & && &\\ \hline 
XIII & any & $2$ &$ 2$ &$ 1 $& & & $1 $& &  &  && & &  && & & & && & && &\\ \hline 
\end{tabular}
}
\caption{Automorphism groups of del Pezzo surfaces of degree $2$}
\label{tbl:autodp2}
\end{table}
\end{landscape}

\begin{landscape}
\begin{table}[htbp]
\centering
\renewcommand{\arraystretch}{1.3}
\scalebox{0.545}{%
\begin{tabular}{|c|c|cc|cccccccccccccccccccccccccc|}
\hline
Name & $\operatorname{char}$ & $\Aut(X)$ & Order & ${\rm id}$  & $4A_1$ & $8A_1$ & $2A_2$  & $3A_2$& $4A_2$ & $2A_3 + A_1$ & $2A_4$ & $A_5 + A_2 + A_1$ & $D_4 + A_2$& $2D_4$ &$D_4(a_1) + A_1$  & $2D_4(a_1)$ & $D_8(a_3)$ & $E_6 + A_1$ & $E_6(a_2)$& $E_6(a_2) + A_2$ & $E_7(a_2)$ &$E_7(a_4) + A_1$ & $E_8$ & $E_8(a_1)$ &$E_8(a_2)$  & $E_8(a_3)$ & $E_8(a_5)$ &$E_8(a_6)$ &$E_8(a_8)$ \\
\hline \hline
I & $\ne 3,5$ & $3 \times (\SL_2(3):2)$& $144$ &$ 1$ &$ 12$ & $1$ & $8$ & $16 $& $2$ &  &  &  &  & $8 $&  & $6$ & $12$& &  & $24$ &  & $16 $ &  & $24$ &  &$ 12  $&  &  &  $2$  \\
 & $5$ & $6.\PGL_2(5)$& $ 720 $&$ 1$ &$ 20 $& $1$ & $20$ & $40 $& $2$ &  &$ 24 $& $40 $& $40 $& $20$ &  & $30$ & $60$ & & $40$ & $40 $ &  &$ 40 $ & $48 $& $120$ &  &$ 60 $ &$ 48$ & $24 $& $ 2$\\ \hline 

II & $\ne 3,5$ & $3 \times 2.D_{12}$& $72$ &$ 1 $& $8$ & $1$ &$ 2$ &$ 4 $& $2$ & & & $4$ &$ 4 $& $2$ &  &$ 6$ & & & $4$ & $16 $& &  $4  $& & & & $12$ & & &  $2$ \\ 
& $5$ & \multicolumn{2}{c|}{\cellcolor{gray!20}(same as ${\rm I}$)} & \multicolumn{26}{c|}{\cellcolor{gray!20}} \\ \hline 

III & $\ne 3$ & $6 \times D_6$& $36 $&$ 1$ & $6 $&$ 1$ & $2$ & $4$ & $2$ & & & & & $2$ &  & & & & & $12 $& & $4$  & & & & & & & $2$ \\ \hline 

IV & $\ne 3,5$ & $30$& $30$ & $1$ &  & $1$& &  & $2$ &   & $4$ & & & &  & & & &  & &  &  & $8$ & & & & $8$ &  $4 $& $2$\\ 
& $5$ & \multicolumn{2}{c|}{\cellcolor{gray!20}(same as ${\rm I}$)} & \multicolumn{26}{c|}{\cellcolor{gray!20}} \\ 
& $3$ & $6.10$& $60$ & $1 $&  & $1$ & & & $2$ &  & $4$& & & & &$6$ &  & & & & & & $8$ & & $24$ & & $8$ & $4$ & $2 $\\ \hline 

M  & $\ne 3$ & $3 \times D_8$& $24$ &$ 1$ &$ 4 $& $1$ & &  & $2$ & & & & & & & $2$ &  & & & $8$ & & & & & & $4$ & & & $2$\\ 
& $3$ & $6.D_8$&$ 48 $& $1$ & $8 $& $1$ & &  &$ 2$ & & & & & & & $2$ &$ 12$ & & & $16$ & & & & & & $4$ & & & $2$\\  \hline 

V & any & $\SL_2(3)$& $24$ & $1 $& & $1 $& &$ 8$ & & & & & & & & $6$ & & & & & & $8$ & & & & & & & \\ \hline 

VI & $\ne 3$ & $2.D_{12}$ & $24$ & $1$ &$ 8$ &$ 1$ & $2$ & & & & &  & $4$ & $2$  &  & $6$ & & & & & &  & & & & & & & \\ \hline 

VII & $\ne 3$ & $2 \times 12$& $24 $& $1$ &$ 2 $& $1$ & &$ 2$ & &$ 2$ &  &$ 2$ & & & $2 $& & &$4$  & $2$ & & $4$ &$ 2$ & & & & & & & \\ \hline 

VIII & $\ne 3,5$ & $20$& $20$ &$ 1 $&  &$ 1$ & &  & &  & $4$& & & & & $2$ &  & & & & & & & & $8$ & & &$ 4 $& \\
 & $3$ & \multicolumn{2}{c|}{\cellcolor{gray!20}(same as ${\rm IV}$)} & \multicolumn{26}{c|}{\cellcolor{gray!20}} \\ \hline 

IX & $\ne 3$ & $D_{16}$& $16 $&$ 1$ & $8 $& $1$ & & &  & & & & & & &$ 2$ & $4$ & & &  & & & & & & & &&  \\ 
& $3$ & \multicolumn{2}{c|}{\cellcolor{gray!20}(same as ${\rm M}$)} & \multicolumn{26}{c|}{\cellcolor{gray!20}} \\ \hline 

X & $\ne 3$ & $D_{12}$&$ 12 $& $1$ &$ 6$ & $1$ & $2$ & & & & & & & $2$ &  & & & & & & &  & & & & & & & \\ \hline 

XI & any & $2 \times 6$& $12 $& $1 $& $2$ & $1 $& & &$ 2$ & & & & & & & & & & & $4$ & & & & & & & & & $2 $\\ \hline 

XII & $\ne 3$ & $2 \times 6$& $12$& $1$ & $2$ & $1$ &  &$ 2$ & & & &$2$ & & & & & & & $2 $& & &$ 2$ & & & & & & & \\ \hline 

XIII & $\neq 5$ & $10$&$ 10$ & $1$ & & $1$ & & & & & $4$& & & & & & & & & & & & & & & & & $4$ & \\
 & $5$ & $2.D_{10}$& $20$ &$ 1$ & &$ 1$ & & & & & $4$& & & & & $10$ & & & & & & & & & & & & $4 $&  \\ \hline 

XIV & any & $Q_8$& $8$ & $1$ & & $1$ & & & & & & & & & & $6$ & & & & & & & & & & & & & \\ \hline 
 
XV & any & $2 \times 4$& $8$ & $1$ & $2$ & $1$ & & & & $2 $&  & & & &$ 2 $& & & & & & & & & &  & & & & \\ \hline

XVI & any & $D_8$&$ 8 $& $1$& $4$ & $1 $& & & & & & & & & & $2 $& & & & & & & & & &  & & & \\ \hline 

XVII & any & $6$& $6$ &$ 1$ &  & $1$ & & & $2$ & & & & & & & & & & & & & & & & & & & & $2$ \\ \hline 

XVIII & any & $6$&$ 6 $& $1 $&  & $1 $& & $2$ & & & & & & & & & & & & & & $2 $& & & & & & & \\ \hline 

XVIII' & $3$ & $2.3^2$& $18$ & $1$ &  & $1$ & & $8$ & & & & & & & & & & & & & & $8$ & & & & & & &  \\ \hline 

XIX & any & $4$& $4 $& $1$& & $1$ & & & & & & & &  & & $2$ & & & & & & & & & & & && \\ \hline 
XIX' & $3$ & $2.D_6$& $12$ &$ 1$ & & $1$& & &$ 2$& & & & & & & $6$ & & & & & & & & & & & & & $2 $\\ \hline 

XX & any & $2^2$ & $4$ & $1$ & $2$ & $1$ & & & & & & & & & & & & & & & & & & & & & & &  \\ \hline 

XXI & any & $2$ &$ 2 $&$ 1 $& &$1 $& & &  & & & & & & & & & & & & & & & & & & & & \\ \hline

\end{tabular}
}
\caption{Automorphism groups of del Pezzo surfaces of degree $1$}
\label{tbl:autodp1}
\end{table}
\end{landscape}

\restoregeometry

\end{document}